\def\emptyset{\mathop{\centernot\ocircle}}
\newcommand{\E}{\mathbf E}
\newcommand{\obar}{\overline}
\newcommand{\bU}{\overline U}
\newcommand{\death}{\raisebox{-0.17em}{\includegraphics[width=3.3mm]{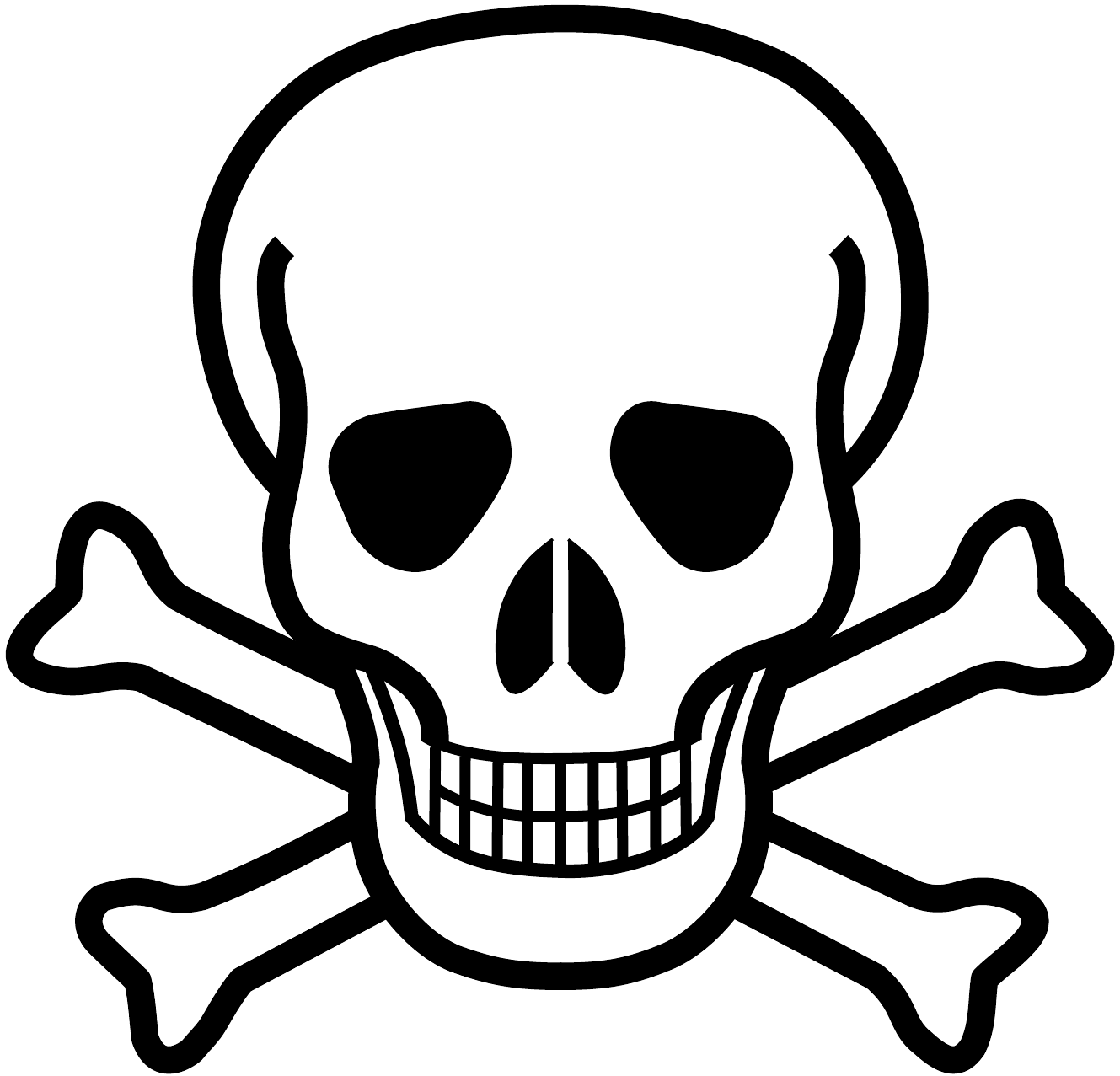}}}
\renewcommand{\P}{\mathbf P}
\newcommand{\MM}{\mathscr M}
\newcommand{\LL}{\mathscr L}
\newcommand{\RR}{\mathfrak R}
\newcommand{\K}{\mathfrak K}
\definecolor{darkred}{rgb}{0.7,0.1,0.1}
\definecolor{darkblue}{rgb}{0.1,0.1,0.8}
\def\s{\mathfrak s}
\def\ad{\mathrm{pr}}
\def\restr{\mathord{\upharpoonright}}
\def\T{\mathbf{T}}
\def\PPi{\boldsymbol{\Pi}}
\def\PPhi{\boldsymbol{\Phi}}
\def\hPPhi{\hat{\boldsymbol{\Phi}}{}}
\def\mod{\boldsymbol{\xi}}
\def\PP{\mathfrak{P}}
\def\restr{\mathord{\upharpoonright}}
\def\V{E}
\def\${|\!|\!|}
\def\deg{\mathop{\mathrm{deg}}}
\def\hdeg{\mathop{\overline{\mathrm{deg}}}}
\def\noise{f}
\newtheorem{assumption}{Assumption}
\tikzset{
	dot/.style={circle,fill=black,draw=black, solid,inner sep=0pt,minimum size=0.5mm},
	doth/.style={circle,fill=white,draw=black, solid,inner sep=0pt,minimum size=0.5mm},
	yy/.style={circle,fill=gray!20,draw=black,inner sep=0pt,minimum size=0.8mm},
	>=stealth,
	}
\def\DeclareSymbol#1#2#3{\expandafter\gdef\csname MH@symb@#1\endcsname{\tikz[baseline=#2,scale=0.15]{#3}}%
\expandafter\gdef\csname MH@symb@#1s\endcsname{\scalebox{0.6}{\tikz[baseline=#2,scale=0.15]{#3}}}}
\def\<#1>{\csname MH@symb@#1\endcsname}
\begin{document}

%\date{}
\title{The strong Feller property for\\ singular stochastic PDEs}
\author{M.~Hairer$^1$ and J.~Mattingly$^2$}
\institute{Mathematics Research Centre, University of Warwick
 \\ \email{m.hairer@warwick.ac.uk}
\and Departments of Mathematics and Statistical Science, Duke University\\
 \email{jonm@math.duke.edu}}
\titleindent=0.65cm

\maketitle
\thispagestyle{empty}

\begin{abstract}
We show that the Markov semigroups generated by a large class of singular stochastic 
PDEs satisfy the strong Feller property. These include for example the KPZ equation
and the dynamical $\Phi^4_3$ model.
As a corollary, we prove that the Brownian bridge measure is the unique 
invariant measure for the KPZ equation with periodic boundary conditions.
\end{abstract}

\tableofcontents

\section{Introduction}

Recall that a Markov operator $\PP$ on some separable metric space $\CX$
is said to satisfy the \textit{strong Feller property} if it maps
all bounded measurable functions $\CX \to \R$ into bounded continuous functions. 
Intuitively, the strong Feller property allows us to link measure theoretic
properties of a process to the corresponding topological properties. As a result, it
is a very useful ingredient when trying to establish the ergodicity of a 
given Markov process, see for example \cite{MeynTweedie}.
When considering finite-dimensional diffusions with sufficiently smooth coefficients, 
the strong Feller property is a 
consequence of the parabolic H\"ormander condition \cite{Hormander,Malliavin,Norris}, 
and this condition is essentially sharp. 

In infinite dimensions, no such sharp condition exists, despite considerable progress
on notions of hypoellipticity in that case,
see for example \cite{SFDegenerate,Josef,MR2257860,Annals,EJP}. This should of course
not come as a surprise since measures on infinite-dimensional spaces have a rather
annoying tendency of being mutually singular. The strong Feller property on the other hand
implies that the total variation distance between transition probabilities
starting from nearby points is small \cite{Sei02,NotesNE}, thus ruling out mutual singularity.
However when considering parabolic stochastic PDEs in bounded domains driven by noise that
is ``sufficiently non-degenerate and not overly smooth spatially,'' it is well-known that the strong Feller property holds,
see for example \cite{SFOld1,SFOld2,SFOld3,DPZ,SFDegenerate}. 

All these results do however rely very strongly on the well-posedness
of the equations under consideration, as well as on good a priori
control on their derivative with respect to initial conditions. This
typically enforces some conditions on the driving noise requiring it
to be sufficiently regular for the standard solution theory
\cite{ref2} to apply.  The aim of this article is to extend these
results to singular stochastic PDEs like the dynamical $P(\Phi)_2$
model \cite{Jona,DPD2,AlbRock91}, the KPZ equation \cite{KP,KPZ}, and
the dynamical $\Phi^4_3$ model \cite{reg}. 

Both of these examples contain noise which is spatially rough enough to
make the existence and uniqueness theory extremely
non-trivial. However, when
proving  the strong Feller property, rougher noise only makes the  core
of the
proof easier, the only difficulties being making sense of all of the
objects being manipulated. This is in contrast to the case of
spatially smooth or degenerate noise where the existence and uniqueness of solutions and the
correctness of all manipulations is straightforward while the  proof
of any property like the strong Feller property which links the long
time behaviour of nearby points and illuminates the ergodic properties
of the system are more complicated, see for example \cite{Annals,EJP}.

The main novelty of our
approach is that while it is close in spirit to proofs based on the
Bismut-Elworthy-Li formula \cite{EL,SFOld3}, we do not require any a
priori control on the solutions: they are in principle allowed to blow
up, even with positive probability. In particular, this allows to
strengthen well-posedness results for almost every initial condition
as in \cite{AlbRock91,DPD2,Konstantin} to every initial condition in the
topological support of the invariant measure.  It also yields as a
corollary the uniqueness of the Brownian bridge measure (modulo height
shifts) for the KPZ equation with periodic boundary conditions, which
had not been established before.

Both our approach and that used in the proof based on the Bismut-Elworthy-Li formula
rely on transferring the variation caused by shifting the
initial condition infinitesimally to an infinitesimal shift in the
noise. In versions of the argument closest
to ours, one then integrates by parts against the Gaussian measure,
moving the infinitesimal variation in the noise to the Wiener measure
and finally averaging over the realisations of the noise. This in particular requires the
solution to be well posed almost surely. In our approach we accumulate these
infinitesimal shifts to build a macroscopic shift in
the noise corresponding to a macroscopic shift in the initial
condition. This allows us to work in a more pathwise manner and
consider equations which might explode with positive probability.

\begin{remark}
  It is very natural to ask whether the solutions to the stochastic
  Navier-Stokes equations driven by space-time white noise in
  dimensions $2$ and $3$ as constructed in \cite{DPD,Zhu} also satisfy
  the strong Feller property. While we believe this to be true, our
  results as they are presented in Section~\ref{sec:reg} do not cover
  this case because of the presence of the Leray projection. We do
  however expect the results of Section~\ref{sec:SF} to be applicable
  to that situation as well by slightly modifying the argument of
  Section~\ref{sec:reg}.  We feel that this is largely a technical
  issue but do not explore it here.
\end{remark}

Let us also mention that some related results have recently been obtained. In \cite{RZZ},
the authors show that the $\Phi^4_2$ measure (as constructed in \cite{Nelson})
 is indeed the unique invariant
measure for the dynamical $\Phi^4_2$ model. The proof of this fact however does not 
make use of the strong Feller property but instead relies on an asymptotic coupling
argument. While this is sufficient to prove many ergodic properties,
it does not establish the local regularity in the total variation
topology of the transition density. More recently, in \cite{Pavlos}, the authors obtained not only the strong Feller property
for the dynamical $P(\Phi)_2$ model, but also the exponential ergodicity of
the dynamical $\Phi^4_2$ model.
While these results are much stronger than ours, they are restricted to one particular model
and rely strongly on good a priori bounds on the solutions which are not
available in all the cases we treat.

The structure of this article goes as follows. In Sections~\ref{sec:setup} and \ref{sec:SF},
we set up an abstract framework and give sufficient conditions for a Markovian 
continuous random dynamical system to satisfy the strong Feller property. 
This framework is very general and we expect it to be useful also in other contexts.
It is
designed so that, as shown in Section~\ref{sec:reg}, it covers a very large class
of semilinear stochastic PDEs, provided that we build their solutions via 
the theory of regularity structures.
We do however expect that constructions using paracontrolled calculus
as in \cite{Paracontrol,Khalil,Reloaded} can also be fitted into our framework.
In our last section, we then finally show that many interesting examples of singular 
stochastic PDEs are covered by our results.

\subsection*{Acknowledgements}

\small{This work was initiated during the programme ``New Challenges
  in PDE: Deterministic Dynamics and Randomness in High and Infinite
  Dimensional Systems'' which both authors attended at MSRI Berkeley.
  MH gratefully acknowledges financial support from the Philip
  Leverhulme Trust and from the European Research Council. JCM
  gratefully acknowledges financial support of the Simons Fondation
  through a collaboration grant
  and of the NSF though the grants
  DMS-1613337 and DMS-1546130.}

{\small }

\section{Abstract setup}
\label{sec:setup}
In this section, we set up the abstract framework needed to make
precise the idea that an infinitesimal variation in the initial condition can
be equated with an infinitesimal variation in the noise. This is the
content of equation \eqref{e:propertyShift} in
Assumption~\ref{ass:shiftmap} below. It is in establishing
\eqref{e:propertyShift} for an adapted shift  that we  require
noise which is non-degenerate and with sufficient spatial roughness, see also Assumption~\ref{ass:nondegenerate} below.

Over the next two sections, we will consider a general random dynamic in some
Banach space $U$, defined over
a probability space
$(\Omega,\mathcal{F},\P)$. We will always consider Gaussian probability spaces
endowed with a filtration. More precisely, we assume that 
$\Omega$ is a separable Banach space and there exists a Hilbert space $H_0$ such that
$\CH = L^2(\R,H_0) \subset \Omega$ is the Cameron-Martin space for the 
Gaussian measure $\P$.
The canonical random variable $\omega$
 drawn from $\Omega$ according to $\P$ induces the two-sided continuous filtration
$\{\mathcal{F}_{s,t}, s<t\}$ by $\mathcal{F}_{s,t} = \sigma\{
h^*(\omega)\,:\, h \in L^2([s,t],H_0)\}$, with the canonical inclusion
$L^2([s,t],H_0) \subset \CH$, where we use the canonical identification
$h \leftrightarrow h^*$ between the Cameron-Martin space and measurable linear
functionals on $\Omega$, see \cite{Bogachev,SPDEnotes}. We say that a stochastic process over
$(\Omega,\mathcal{F},\P)$ is adapted if it is adapted to the filtration
$\CF_t \eqdef \CF_{-\infty,t}$.

We also assume that we are given a complete separable metric space $\CM$ as well as a
random variable
\begin{equ}
  \mod\colon \Omega \rightarrow \CM\;.
\end{equ}
In the applications we have in mind, $\CM$ is the space of admissible
models for a given regularity structure as in \cite{reg} and $\mod$ is the map 
constructing a suitable admissible model from the underlying 
driving noise. We will generally write $\mod$ for the
$\CM$-valued random variable given by $\mod=\mod(\omega)$ where
$\omega$ is drawn according to $\P$ and $\xi$ for a generic element of $\CM$.
For any measurable map $X\colon \Omega \to \CX$ with $\CX$ a Polish space,
and for every $h \in \CH$, we furthermore use without further ado the
notation $\omega \mapsto X(\omega+h)$ as a shortcut for the (unique up to
null sets) random variable $Y$
such that $\E F(Y) = \E F(X) \exp(h^*(\omega) - \|h\|_\CH^2/2)$ for every
bounded measurable $F \colon \CX \to \R$.

The type of random dynamics that we are interested in are then defined 
on a Banach space $U$ and described by a family of maps
\begin{equ}
   \Phi_{s,t} \colon\bU \times \CM \rightarrow \bU\;,\qquad
   0 \le s \le t \le 1\;,
\end{equ}
where we have extended the state space $U$ to include a ``death
state'' by defining 
$\bU= U \cup \{\death\}$. The space $\bU$ is again a separable metric space
by setting for example $d(\death,u)^2 = 1 + \|u\|^2$ for all $u \in U$.
These maps are assumed to be consistent in the sense that,
for any $t \in [s,r]$ with $s \le r$, one has the identity
\begin{equ}[e:consistency]
\Phi_{t,r}(\Phi_{s,t}(u,\xi),\xi) = \Phi_{s,r}(u,\xi)\;,\qquad \forall (u,\xi) \in \bU \times \CM\;.
\end{equ}
The event $\Phi_{s,t}(u,\mod)=\death$ should be
understood as the system ``blowing up''. After blow-up, the dynamics
can no longer be defined.  Consistent with this, we assume that
$\Phi_{s,t}(\death,\xi)=\death$ for all $\xi$  and all $0\leq s \le t \leq 1$, 
which embodies the belief that
``resurrection is impossible.'' We also use the shorthands
$\Phi_t = \Phi_{0,t}$ and $\Phi = \Phi_{0,1}$.

\begin{remark}
The property \eqref{e:consistency} may look strange to someone used to random
dynamical systems since we do not perform any time-shift of the element $\xi$.
Since we never look at times beyond time $1$, this turns out to be a much more
convenient convention in our setting.
\end{remark}

Throughout this article, we make the following rather strong regularity assumption
on the maps $\Phi_{s,t}$.

\begin{assumption}\label{ass:cont}
The preimage of $U$ under the map
\begin{equ}
\Phi\colon (s,t,u,\xi) \mapsto \Phi_{s,t}(u,\xi)\;,
\end{equ}
is open and $\Phi$ is jointly continuous on $\Phi^{-1}(U)$. 
Furthermore, $\Phi$ is Fr\'echet differentiable in $u$ at every point of 
$\Phi^{-1}(U)$.
\end{assumption}

In view of this assumption, we define the sets
\begin{equ}
\CN_t = \{(u,\xi)\,:\,  \Phi_t(u, \xi) \neq\death\}\;,\qquad \CN \eqdef \CN_1\;.
\end{equ}
In particular, our assumption guarantees that the sets $\CN_t$ are all open in 
$U \times \CM$. 
We will denote the Fr\'echet derivative of $\Phi_t$ in the
direction $v \in U$ by   $D\Phi_t(u,\xi)v$, with the understanding that 
$D\Phi_t$ is only defined on $\CN_t$.
We also assume that we are given a somewhat more quantitative way of 
measuring how close a given point $(u,\xi)$ is to being outside of $\CN_t$.
This is encapsulated in the following assumption.

\begin{assumption}\label{ass:r}
We are given a lower semi-continuous map
$r \colon [0,1]\times  U \times \CM \to [0,\infty]$ with the 
following properties.
\begin{enumerate}
\item For every $u \in U$ and every $t \in [0,1]$, the map 
$\omega \mapsto r_t(u,\mod(\omega))$ is $\CF_t$-measurable. 
\item For every $(u,\xi) \in U \times \CM$, one has
$r_0(u,\xi) < \infty$ and the map 
$t \mapsto r_t(u,\xi)$ is continuous and increasing. 
%Here, the target
%$[0,\infty]$ is interpreted as the two-point compactification of $\R_+$.
\item One has 
$\{(u,\xi)\,:\, \Phi_t(u, \xi) = \death\} =
\{(u,\xi)\,:\, r_t(u, \xi) = \infty\}$.
%\item For every $R > 0$ and every $\eps > 0$ there exists $\delta > 0$ such that, for
%all $t \in [0,1]$, one has $r_{s}(u,\xi) \le r_t(u,\xi) + \eps$ for all $s \in [t, t+\delta]$
%and all $(u,\xi)$ with $r_t(u,\xi) \le R$.
\item For every $t \in (0,1]$, the map $r_t$ is locally Lipschitz continuous
on $\CN_t$.
\end{enumerate}
\end{assumption}

The continuity of the map $t \mapsto r_t$ combined with the above
assumptions imply that
$\mathcal{N}_s \supset \mathcal{N}_t$ for all $0\leq s \leq t \leq 1$
and that the following continuity property holds
\begin{equ}
\mathcal{N}_t = \bigcup_{a \in (t,1]} \mathcal{N}_a \quad \text{for all $0\leq t \leq 1$}\;.
\end{equ}
In particular, this implies that for any $u \in U$
\begin{equ}
  \lim_{t \rightarrow 0^+} \P\big( (u,\mod) \in \mathcal{N}_t \big) =1\;.
\end{equ}
So far, we have described a rather generic class of random dynamical systems.
The only part of our setup that is somewhat non-standard is the 
factorisation of our map into a fixed measurable map $\mod$ which has a nice
Gaussian probability space as its domain and a map $\Phi$
which has nice continuity properties.
This factorisation is essential to the pathwise approach to stochastic analysis,
be it via rough paths \cite{Terry,Peter} or regularity structures \cite{reg}. Indeed,
for the examples we have in mind, it is in general not possible to simplify
the above setup by assuming that $\CM$ is a topological vector space and
the map $\mod$ is linear.

In the next subsection, we introduce the additional structural assumptions
that are crucial in guaranteeing the non-degeneracy of the noise
required for the strong Feller property to hold.

%Since $\mod$ can be viewed as an $\CM$-valued random variable 
%over $(\Omega,\mathcal{F},\P)$
%\begin{align*}
%  \bU \ni u \mapsto \Phi(u,\mod) \in \bU 
%\end{align*}
%can be seen as a random map on $\bU$. 
%It is the ergodic properties 
%of this map which are the topic of this note.

\subsection{The shift map and differentiation with respect to the noise}
\label{sec:shift-map}

We will be interested in transformations of the dynamics induced
by shifting the noise by translation. 
The assumptions we present in this section are twofold. On the one hand,
we want to exhibit a space of Cameron-Martin shifts which are sufficiently ``nice'' 
to be compatible with the topology of the space $\CM$. On the other hand, we want
to guarantee that this space is sufficiently large to contain shifts allowing
to compensate for arbitrary shifts in the initial condition. 

%For $h \in \CH$, we let
%$\mathcal{E}(h)$ denote the associated exponential martingale which is
%characterized by 
%\begin{equ}
%  \E f(\omega)  =   \E f(\omega+h)\mathcal{E}(h)
%\end{equ}
%for any bounded measurable $f\colon \Omega \rightarrow \R$.

Regarding the first type of assumption, we consider a space 
$\V = L^p([0,1], X_0) \subset \CH$ of
directions, where $X_0$ is some separable Banach subspace of $H_0$ and $p \in (2,\infty)$
is some fixed value. (In the sequel, we will typically choose $p$ very large. The main reason
not to use $p = \infty$ is the lack of separability.) 
It will sometimes be convenient to also consider shifts in 
$\V_s  = L^p([0,s], X_0)$ for some $s \in (0,1]$. We then view
$\V_s$ as a subspace of $\V = \V_1$ by identifying its elements with
the corresponding function that vanishes on $(s,1]$.
For $\bar s \le s$, we also write $\pi_{\bar s}\colon \V_s \to \V_{\bar s}$ for the
restriction operator.
Our assumption regarding the compatibility of $\V$ with the topology of $\CM$ is
then as follows.

\begin{assumption}\label{ass:action}
We are given a \textit{continuous} action
$\tau\colon \V \times \CM \rightarrow \CM$
of $\V$ onto $\CM$ such that, for every $\xi \in \CM$,
the map $h \mapsto \tau(h,\xi)$ is locally Lipschitz continuous and such that, 
for every $h \in \V$, the identity
\begin{equ}[e:definition]
  \mod(\omega+ h)=   \tau(h,\mod(\omega))\;,
\end{equ}
holds $\P$-almost surely. 
Furthermore, the action $\tau$ is compatible 
with the maps $\Phi_{s,t}$ in the sense that if $h$ is such that 
$h(r) = 0 $ for $r \in [s,t]$, then
\begin{equ}[e:dependency]
\Phi_{s,t}(u,\tau(h,\xi)) = \Phi_{s,t}(u,\xi)\;,
\end{equ}
for every $u \in \bar U$.
\end{assumption}

Since we now have a notion of a shift on $\CM$, it is
reasonable to explore differentiation of $\Phi(u,\xi)$ with respect to
$\xi$. We do not want to assume full Malliavin differentiability,
which would essentially amount to defining a directional derivative for all
directions in
$\CH$, although in many cases of interest one does expect this to be 
achievable, albeit at the cost of significant additional technical difficulties, 
see for example \cite{PeterMalliavin}. 
Instead, we will only assume that the shift is well
behaved in the directions given by $\V \subset \CH$ as follows.

\begin{assumption}\label{ass:MallDer}
For every $(u,\xi) \in \CN_t$, the map
\begin{equ}
h \mapsto \Phi_t(u,\tau(h,\xi))\;,
\end{equ}
is Fréchet differentiable (as a map between the Banach spaces $\V$ and $U$)
at $h=0$. 
\end{assumption}

We denote this Fréchet derivative by
$\mathcal{D}\Phi_t(u,\xi)$.
Lastly, and this is the main assumption guaranteeing that the strong Feller property holds,
we would like to assume that the range of $\mathcal{D}\Phi_t(u,\xi)$ contains
that of $D \Phi_t(u,\xi)$. 
More precisely, we would like to find a linear operator $A_t(u,\xi)$ transferring a 
variation $v \in U$ in the initial condition $u$ to a
variation $h \in \V_t$ in the driving noise. 

However, since we furthermore want to use adapted shift maps,
the precise formulation of this condition requires some care. The formulation
we choose to go with is as follows.

\begin{assumption}\label{ass:shiftmap}
For every $s \le t$ with $t\in (0,1]$, we are given a map 
\begin{equ}
  A_t^{(s)} \colon \CN_s \rightarrow L(U, \V_s)\;,\footnote{Here,
  $L(U,\V)$ denotes the space of bounded linear operators $U \rightarrow \V$.}
\end{equ}
and these maps are compatible in the sense that, for
any $0 < s < r \le t$, any $(u,\xi) \in \CN_r$, and any $v \in U$, one has
\begin{equ}
A_t^{(r)}(u,\xi) v \restr [0,s]  = A_t^{(s)}(u,\xi) v\;.\footnote{As usual, we write $F\restr A$ for the restriction of a map $F$ to a subset $A$ of its domain.}
\end{equ}
Furthermore, for every $u \in U$ and $s \le t$, the map
\begin{equ}
\omega \mapsto A_t^{(s)}(u,\mod(\omega))\;,
\end{equ}
is $\CF_s$-measurable and  one
has the identity
\begin{equ}[e:propertyShift]
  D\Phi_t (u,\xi) v + \mathcal{D}\Phi_t(u,\xi) (A_t^{(t)}(u,\xi) v) = 0\;,
\end{equ}
for all $v \in U$ and all $(u,\xi) \in \CN_t$.
Furthermore, for every $0 < s \le t \le 1$, the map 
$A_t^{(s)}$ is locally Lipschitz continuous from $\CN_s$ to $L(U,\V_s)$
and bounded on every set of the form $\{(u,\xi)\,:\,r_s(u,\xi) \le R\}$
with $R > 0$.
\end{assumption}

\begin{remark}
This formulation seems somewhat awkward: why not simply define a map 
$A_t$ on $\CN_t$ satisfying \eqref{e:propertyShift} and be done with it?
The problem is that the event $(u,\mod(\omega)) \in \CN_t$ is in general not
$\CF_s$-measurable for $s < t$, so that our measurability condition would
fail in this case. On the other hand, Assumption~\ref{ass:r} does guarantee that
the event $(u,\mod(\omega)) \in \CN_t$ is $\CF_t$-measurable, so that 
the condition as stated has at least some chance of being fulfillable.
We will however sometimes use the notational convention $A_t = A_t^{(t)}$ 
for simplicity.
\end{remark}

\begin{remark}
We say that a map is ``locally Lipschitz'' if, for every $x$ in its domain,
there exists a neighbourhood of $x$ on which it is Lipschitz continuous.
\end{remark}

\section{Strong Feller property}
\label{sec:SF}

We now show how one can ``integrate up'' the infinitesimal shift given in \eqref{e:propertyShift} 
to produce a macroscopic shift which
drives  the solution starting from one initial condition to the same
location at the terminal time as the solution starting from a
nearby initial condition.  This pathwise coupling will then be used to establish the
strong Feller property. Since we will allow the existence of
trajectories which blow up, we will only be able to prove the
existence of this ``compensating shift'' on a set of positive (though not
necessarily full) measure of noise trajectories.

Throughout this section, we assume that we are given $\Phi$ and $\mod$ as above
and that Assumptions~\ref{ass:cont}--\ref{ass:shiftmap} are satisfied. We will henceforth 
simply refer to this as ``the context of Section~\ref{sec:setup}''. 
For any bounded measurable map 
$\Psi\colon \bar U \to \R$, we set $\PP \Psi(u) = \E \Psi(\Phi(u,\mod(\omega)))$,
which defines a Markov operator on $\bar U$.
The main abstract result of this article is that in the context of the previous section, 
the Markov operator $\PP$ enjoys the strong Feller property, namely $\PP\Psi$ is 
continuous for all $\Psi$ that are only bounded and measurable.
Our proof actually gives the apparently stronger conclusion that the transition 
probabilities of $\PP$ are continuous in the total variation distance, but this is known
\cite{DM83,Sei02,NotesNE} to be essentially equivalent.

\subsection{Existence of a compensating shift}
\label{sec:exist-comp-shift}

We now prove the existence of a ``compensating shift'' or rather a
shift in the noise so that the solution starting  from one initial
condition coincides with the solution starting from another initial
condition at time $t$. This is the content of equation
\eqref{e:defShift}  in the following theorem.

\begin{theorem}\label{theo:shift}
In the context of Section~\ref{sec:setup}, for every $u \in U$, every $M$ sufficiently large 
and every $t \in (0,1)$, there exists a map 
$(\bar u,\xi) \mapsto h^{(t,M)}(u,\bar u,\xi) \in \V_t$ such that
\begin{claim}
\item[1.] For every $(u,\xi) \in \CN_t$, there exist $M_+,\gamma_+ > 0$ such that the identity
  \begin{equ}[e:defShift]
    \Phi_t(u,\xi) = \Phi_t(\bar u, \tau(h^{(t,M)}(u,\bar u,\xi),\xi))\;,
  \end{equ}
  holds for every $M \ge M_+$ and for every $\bar u$ with $\|\bar u - u\|\le \gamma_+$.
\item[2.] For every $u$, $\bar u$ in $U$ and every $s \in (0,t)$, the 
map $\omega \mapsto \pi_s h^{(t,M)}(u,\bar u,\mod(\omega))$ is $\CF_s$-measurable.
\item[3.] One has the bound $\|h^{(t,M)}(u,\bar u,\xi)\|_\CH \le M\|\bar u - u\|_U$.
\end{claim}
\end{theorem}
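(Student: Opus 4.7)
The plan is to integrate the infinitesimal compensation relation \eqref{e:propertyShift} along a straight-line interpolation in $U$. Set $u(\sigma) = u + \sigma(\bar u - u)$ for $\sigma \in [0,1]$ and consider the $\V_t$-valued ODE
\begin{equation*}
\dot h(\sigma) = A_t\bigl(u(\sigma), \tau(h(\sigma),\xi)\bigr)(\bar u - u)\;, \qquad h(0) = 0\;,
\end{equation*}
writing $A_t = A_t^{(t)}$. Applying the chain rule, separating the partial in $u$ supplied by Assumption~\ref{ass:cont} from the derivative in the noise direction supplied by Assumption~\ref{ass:MallDer}, and invoking \eqref{e:propertyShift} with $v = \bar u - u$, I obtain $\tfrac{d}{d\sigma}\Phi_t(u(\sigma),\tau(h(\sigma),\xi)) = 0$, so that $\Phi_t(\bar u,\tau(h(1),\xi)) = \Phi_t(u,\xi)$ whenever the ODE extends to $\sigma = 1$; this is exactly \eqref{e:defShift}. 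Local-in-$\sigma$ solvability comes from local Lipschitz continuity of $\tau$ (Assumption~\ref{ass:action}) and of $A_t$ on $\CN_t$ (Assumption~\ref{ass:shiftmap}). For $(u,\xi) \in \CN_t$, lower semi-continuity of $r_t$ (Assumption~\ref{ass:r}) together with boundedness of $A_t$ on sublevel sets $\{r_t \le R\}$ gives $\|A_t\|_{L(U,\V)} \le K$ in a neighbourhood of $(u,\xi)$; the a priori estimate $\|h(\sigma)\|_\V \le K\sigma\|\bar u - u\|_U$ then confines the trajectory to that neighbourhood for $\|\bar u - u\|_U \le \gamma_+$ small enough, and taking $M_+ \ge C K$ (with $C$ the embedding constant $\V \hookrightarrow \CH$) yields claim (1) together with claim (3) on this ``good'' set.

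For claim (2), the compatibility relation $A_t^{(t)} v \restr [0,s] = A_t^{(s)} v$ allows me to project the ODE to
\begin{equation*}
\frac{d}{d\sigma} \pi_s h(\sigma) = A_t^{(s)}\bigl(u(\sigma), \tau(h(\sigma),\xi)\bigr)(\bar u - u)\;.
\end{equation*}
The key Gaussian input is that any $\CF_s$-measurable functional on $\Omega$ is pointwise invariant under translations by elements of $\CH$ supported on $(s,\infty)$, since these are $L^2$-orthogonal to the $L^2((-\infty,s],H_0)$-generators of $\CF_s$. Combined with the $\CF_s$-measurability assertion in Assumption~\ref{ass:shiftmap} and with $\tau(h,\mod(\omega)) = \mod(\omega + h)$, this gives the crucial identity
\begin{equation*}
A_t^{(s)}\bigl(u, \tau(h,\mod(\omega))\bigr) = A_t^{(s)}\bigl(u, \tau(\pi_s h,\mod(\omega))\bigr)\;,
\end{equation*}
so the projected ODE is self-contained in $\V_s$, with right-hand side that is $\CF_s$-measurable whenever $\pi_s h(\sigma)$ is. A standard Picard iteration in $\sigma$ then delivers $\CF_s$-measurability of $\pi_s h(\sigma)$ for every $\sigma$, and hence of $\pi_s h^{(t,M)}$. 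For arbitrary $(\bar u,\xi)$ the raw ODE may blow up or grow too large, so one must truncate it to enforce claim (3) unconditionally; the natural candidates for the stopping criterion are thresholds on the causal, $\CF_s$-measurable functions $r_s$ (Assumption~\ref{ass:r}).

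The main obstacle is reconciling this truncation with the causality needed for claim (2): a naive stopping time based on $\|h(\sigma)\|_\V$ or $\|h(\sigma)\|_\CH$ is at best $\CF_t$-measurable, and its presence in $\pi_s h^{(t,M)}$ would destroy the projective structure exploited in the measurability argument. Resolving this requires exploiting the full compatibility of the family $\{A_t^{(s)}\}_{s \le t}$ to design a causal stopping rule whose restriction to $[0,s]$ depends only on $\CF_s$-information, so that $\pi_s h^{(t,M)}$ remains $\CF_s$-measurable while the $\CH$-norm bound $\|h^{(t,M)}\|_\CH \le M\|\bar u - u\|_U$ holds unconditionally. This delicate interplay between causality and truncation is where I expect most of the technical work of the proof to concentrate.
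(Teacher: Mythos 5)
Your first half is exactly the paper's strategy: interpolate linearly in the initial condition, run the ODE $\dot h(\sigma)=A_t(u(\sigma),\tau(h(\sigma),\xi))(\bar u-u)$, and use \eqref{e:propertyShift} together with the chain rule to get $\tfrac{d}{d\sigma}\Phi_t(u(\sigma),\tau(h(\sigma),\xi))=0$, whence \eqref{e:defShift} on a neighbourhood of any $(u,\xi)\in\CN_t$. However, the theorem asserts the existence of a single map $h^{(t,M)}(u,\bar u,\xi)\in\V_t$ defined for \emph{all} $(\bar u,\xi)$, with the adaptedness (2) and the bound (3) holding unconditionally, and this is precisely the part you do not supply. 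You correctly diagnose that a stopping rule based on $\|h(\sigma)\|_{\V}$ or $\|h(\sigma)\|_{\CH}$ would ruin the causality needed for (2), and you guess that the cutoff should be built from the causal functions $r_s$, but you then stop, stating that this is ``where most of the technical work concentrates''. That work is the core of the proof, so as it stands the argument establishes only a local version of claim (1) and leaves (2), (3) and the global definition of $h^{(t,M)}$ unproven.

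For comparison, the paper resolves the difficulty not with a stopping time in the interpolation parameter but with a damping factor applied \emph{pointwise in the time variable} $s$: the right-hand side of the ODE is multiplied by $\inf_{\bar\gamma\le\gamma}\chi\bigl(r_s(u_{\bar\gamma},\tau(h_{\bar\gamma},\xi))/R\bigr)$, where $\chi$ is a smooth cutoff and $R=R(M)$ is chosen, using the boundedness of $A_t^{(s)}$ on sublevel sets of $r_s$ from Assumption~\ref{ass:shiftmap}, so that the vector field is bounded by $M$ wherever the cutoff is active (this is why only ``$M$ sufficiently large'' is allowed). Consequently the component $h_\gamma(s)$ freezes at the first parameter value $\gamma^\star_s$ at which $r_s$ reaches $2R$; monotonicity of $s\mapsto r_s$ makes $s\mapsto\gamma^\star_s$ decreasing, and the compatibility $A_t^{(r)}(u,\xi)v\restr[0,s]=A_t^{(s)}(u,\xi)v$ yields the projective consistency $\pi_{\bar s}h^{(s)}_\gamma=h^{(\bar s)}_\gamma$, from which one patches together a global solution via $h_\gamma(s)=h^{(s)}_{\gamma\wedge\gamma^\star_s}(s)$. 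This single construction simultaneously gives: global existence of the truncated flow for every $(\bar u,\xi)$; the bound $\|\d_\gamma h_\gamma(s)\|\le M$ before freezing and $=0$ after, hence claim (3); $\CF_s$-measurability of $\pi_s h$ because both $A_t^{(s')}$ and $r_{s'}$ for $s'\le s$ are $\CF_{s'}$-measurable, hence claim (2); and, for $(u,\xi)\in\CN_t$, the choice $R\ge 2 r_t(u,\xi)$ together with a small $\gamma_+$ guarantees the cutoff never activates, so the truncated ODE coincides with your untruncated one and claim (1) follows as in your argument. Without constructing such a causal, pointwise-in-$s$ truncation (or an equivalent device), the statement you were asked to prove is not established.
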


\begin{proof}
We begin by defining a ``truncated'' version of the maps $A_t$. For this, we 
choose a smooth function $\chi\colon \R_+ \to [0,1]$ such that  $\chi(r) = 1$ for
$r \le 1$ and $\chi(r) = 0$ for $r \ge 2$, and we set
\begin{equs}
\bigl(\tilde A_t(u,h,\xi)v\bigr)(s) &=
\bigl(A_t^{(s)}(u,\tau(h,\xi))v\bigr)(s)\;,\\
\bigl(Z^R(u,h,\xi)\bigr)(s) &=
\chi(r_s(u,\tau(h,\xi))/R)\;,
\end{equs}
for all $s \in [0,t]$. As a consequence of Assumption~\ref{ass:shiftmap}, for every $s \in (0,t)$, 
the restriction of the map
$\tilde A_t(u,h,\xi)$ to $[0,s]$
is locally Lipschitz continuous as a function of $h$ and $\xi$ on the set
$\{(h,\xi)\,:\, (u,\tau(h,\xi)) \in \CN_s\}$.
For any direction $v \in U$ with $\|v\| = 1$ and for any $R > 0$, we then 
consider the ODE in $U\times \V$ given by
\begin{equs}[2][eq:pathODE]
\d_\gamma u_\gamma &= v\;,&\qquad u_0 &= u\;,\\
\d_\gamma h_\gamma &= \tilde A_t(u_\gamma,h_\gamma,\xi)v \inf_{\bar \gamma \le \gamma}
Z^R(u_{\bar \gamma},h_{\bar \gamma},\xi) \;,&\qquad h_0 &= 0\;.
\end{equs}
We claim that this ODE has unique global solutions. Indeed, 
consider $s$ such that $(u,\xi) \in \CN_s$. Then, it follows from Assumptions~\ref{ass:action}
and \ref{ass:shiftmap} 
that the ODE in $U\times \V_s$ given by \eqref{eq:pathODE}, but with 
the right hand side interpreted as taking values in $\V_s$, admits unique solutions
$(u_\gamma, h^{(s)}_\gamma)$ up to
the first value of $\gamma$ such that $r_s(u_\gamma,\tau(h^{(s)}_\gamma,\xi)) = 2R$.
Let us denote this value by $\gamma^\star_s$ (which also depends on $(u,\xi)$ of course).
Note that since $s \mapsto r_s$ is increasing, the map $s \mapsto \gamma^\star_s$
is decreasing. Note also that for $\bar s \le s$ and for $\gamma \le \gamma^\star_s$,
one has $\pi_{\bar s} h^{(s)}_\gamma = h^{(\bar s)}_\gamma$.
The desired solution to \eqref{eq:pathODE} is then given by
\begin{equ}
h_\gamma(s) = h_{\gamma \wedge \gamma^\star_s}^{(s)}(s)\;.
\end{equ}

We now show that the choice $h^{(t,M)}(u,\bar u,\xi) = h_\gamma$ indeed satisfies \eqref{e:defShift}
for $\bar u = u+\gamma v$ with $\|v\| = 1$, provided that $\gamma$ is small enough
and that $R$ is such that $\tilde A_t$ is bounded by $M$ on the set 
$\{(u,h,\xi)\,:\,r_t(u,\tau(h,\xi)) \leq R\}$.

Since $(u,\xi) \in \mathcal{N}_t$, we can find an $R$ large enough so
$r_t(u,\xi) \leq R/2$. Once this value of $R$ is fixed, we can find $\gamma_+ > 0$ such that 
$r_t(u,\tau(h,\xi)) \leq R$ for every $h$ with $\sup_{s\in [0,t]} \|h(s)\| \le \gamma_+$.
Since $\tilde A_t$ is bounded by $M$, we can furthermore choose $\gamma_+$ sufficiently small
so that $\tilde A_t \le 1/\gamma_+$.
This choice of $\gamma_+$ guarantees \textit{a priori} that, for $\gamma \le \gamma_+$,
the solution to \eqref{eq:pathODE} satisfies
\begin{equ}[e:solutionv]
\d_\gamma h_\gamma = A_t^{(t)}(u_\gamma,\tau(h_\gamma,\xi))v \;.
\end{equ}
Differentiating 
the quantity $\Phi_t(u_\gamma, \tau(h_\gamma,\xi))$ with respect to $\gamma$,
we then obtain from the definition of $\CD \Phi_t$, the chain rule, \eqref{e:solutionv},
and \eqref{e:propertyShift} that
\begin{equs}
{d\over d\gamma} \Phi_t(u_\gamma, \tau(h_\gamma,\xi))
&= D\Phi_t(u_\gamma, \tau(h_\gamma,\xi))v 
+ \CD\Phi_t(u_\gamma, \tau(h_\gamma,\xi))A_t^{(t)}(u_\gamma,\tau(h_\gamma,\xi))v \\
&= 0\;,
\end{equs}
which shows that the first property of $h$ does indeed hold.

The measurability of $\pi_s h^{(t)}$ with respect to $\CF_s$ is then 
an immediate consequence of the fact that both $A_t^{(s)}$ and $r_s$
are $\CF_s$-measurable, and the last property holds by definition 
since $\|\d_\gamma h_\gamma(s)\| \le M$ for $\gamma \le \gamma_s^\star$ and 
$\d_\gamma h_\gamma(s) = 0$ afterwards.
\end{proof}

\subsection{The Markov operator $\PP$ is strong Feller}
\label{sec:cp-strong-feller}

We now use the estimate given in \eqref{e:defShift} to prove that
$\PP$ satisfies the
strong Feller property. The existence of a compensating shift as in
\eqref{e:defShift} can be combined with Girsanov's theorem to show that
nearby points have transition measures which are close in total
variation. 

\begin{theorem}\label{theo:main}
In the context of Section~\ref{sec:setup}, the Markov operator $\PP$ 
satisfies the strong Feller property.
\end{theorem}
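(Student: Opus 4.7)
The plan is to use the compensating shift from Theorem~\ref{theo:shift} to couple, on a high-probability event of noise realisations, the solution started from $u$ with the one started from a nearby $\bar u$ but driven by a shifted noise; propagate this coupling from time $t$ to time $1$ via \eqref{e:consistency} and \eqref{e:dependency}; and pay the cost of the shift using an adapted form of Girsanov's theorem, whose error vanishes as $\bar u \to u$.

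Concretely, fix $u \in U$, a bounded measurable $\Psi\colon \bU \to \R$ with $\|\Psi\|_\infty \leq 1$, and $\varepsilon > 0$. Using the limit $\lim_{t \to 0^+}\P((u,\mod) \in \CN_t) = 1$ noted after Assumption~\ref{ass:r}, pick $t \in (0,1)$ with $\P((u,\mod) \notin \CN_t) < \varepsilon$, and then $R > 0$ large enough that the $\CF_t$-measurable event $\Omega^R := \{r_t(u,\mod) \leq R/2\}$ has $\P(\Omega^R) \geq 1 - 2\varepsilon$. Theorem~\ref{theo:shift} then produces, for sufficiently large $M = M(u,t,R)$, an adapted random shift $h(\omega) := h^{(t,M)}(u,\bar u,\mod(\omega)) \in \V_t$ with the uniform bound $\|h\|_\CH \leq M\|\bar u - u\|$, together with a threshold $\gamma_+ > 0$ such that $\Phi_t(u,\mod) = \Phi_t(\bar u,\tau(h,\mod))$ on $\Omega^R$ for every $\bar u$ with $\|\bar u - u\| \leq \gamma_+$. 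Since $h$ is supported in $[0,t]$, \eqref{e:dependency} gives $\Phi_{t,1}(v,\tau(h,\mod)) = \Phi_{t,1}(v,\mod)$ for all $v$, which combined with \eqref{e:consistency} promotes the coupling to time $1$:
\begin{equ}
\Phi(u,\mod(\omega)) = \Phi\bigl(\bar u,\tau(h(\omega),\mod(\omega))\bigr) \qquad \text{on } \Omega^R.
\end{equ}

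Let $F(\omega) := \Psi(\Phi(\bar u,\mod(\omega)))$; by \eqref{e:definition}, $F(\omega + h(\omega)) = \Psi(\Phi(\bar u,\tau(h,\mod(\omega))))$. The adapted Cameron--Martin/Girsanov theorem on $(\Omega,\CF,\P)$, applied along $\V \subset \CH$, then yields
\begin{equ}
\bigl|\E F(\omega + h(\omega)) - \E F(\omega)\bigr| \leq c\bigl(M\|\bar u - u\|\bigr),
\end{equ}
with $c(K) = O(K)$ as $K \to 0$, because the Radon--Nikodym density $\exp(-h^*(\omega) - \tfrac12 \|h\|_\CH^2)$ has second moment bounded by $\exp(K^2)$ when $\|h\|_\CH \leq K$ pointwise. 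Splitting the left-hand side according to $\Omega^R$ and using the coupling identity to substitute for $\Psi(\Phi(\bar u,\tau(h,\mod)))$ on $\Omega^R$ then yields
\begin{equ}
\bigl|\PP\Psi(u) - \PP\Psi(\bar u)\bigr| \leq c\bigl(M\|\bar u - u\|\bigr) + C\varepsilon,
\end{equ}
for a universal constant $C$. Letting first $\bar u \to u$ and then $\varepsilon \to 0$ proves continuity of $\PP\Psi$ at $u$. Since this bound is uniform over $\|\Psi\|_\infty \leq 1$, one actually obtains continuity of the transition probabilities of $\PP$ in total variation.

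The main obstacle is the Girsanov step: one must run the adapted Cameron--Martin theorem on the abstract Gaussian Banach space $(\Omega,\CH,\P)$ rather than a concrete Wiener space. The adaptedness of $h(\omega)$ is built into Theorem~\ref{theo:shift}(2), the uniform Cameron--Martin bound into Theorem~\ref{theo:shift}(3), and the Novikov/Pinsker-type estimate on the density $\exp(-h^*(\omega) - \tfrac12 \|h\|_\CH^2)$ is a routine consequence of $\|h\|_\CH$ being uniformly bounded --- but one must verify that these classical Wiener-space arguments transfer to the filtration $\{\CF_t\}$ on the general Gaussian space used here. A secondary technical subtlety is that the threshold $\gamma_+$ from Theorem~\ref{theo:shift}(1) depends a priori on $\xi$, so to secure a single $\bar u$-neighbourhood valid uniformly across $\Omega^R$ one should further restrict to the subset of $\Omega^R$ on which $\gamma_+(u,\mod(\omega))$ admits a positive lower bound --- an admissible operation because $r_t$ and $A_t^{(t)}$ are locally Lipschitz (Assumptions~\ref{ass:r} and \ref{ass:shiftmap}).
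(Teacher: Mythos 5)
Your proposal follows the same route as the paper: the compensating shift of Theorem~\ref{theo:shift}, propagation of the coupling from time $t$ to time $1$ via \eqref{e:consistency} and \eqref{e:dependency}, a Girsanov change of measure whose density has second moment controlled by $\exp(\|h\|_\CH^2)$ with $\|h\|_\CH \le M\|\bar u-u\|$, and restriction to a high-probability event to absorb the $\xi$-dependence of $\gamma_+$ and $M_+$ (the paper does this with the events $\obar\Omega_{\gamma,M,t}$ and $\Omega_{\gamma,M,t}$ rather than a lower bound on $\gamma_+$ over $\{r_t(u,\mod)\le R/2\}$, but it is the same idea; note only that after the change of variables the relevant event is $\{\omega:\omega+h(\omega)\in\obar\Omega_{\gamma,M,t}\}$, whose probability must itself be controlled by another application of Girsanov, as the paper does).

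There is, however, one genuine gap. You write that ``by \eqref{e:definition}, $F(\omega+h(\omega)) = \Psi(\Phi(\bar u,\tau(h(\omega),\mod(\omega))))$''. But \eqref{e:definition} is assumed only for \emph{deterministic} $h\in\V$, and it holds $\P$-almost surely with a null set depending on $h$; moreover $\mod$ is merely measurable on $\Omega$ and the very expression $X(\omega+h)$ is defined through the Cameron--Martin change of measure, so one cannot substitute the random, $\omega$-dependent shift $h(\omega)$ pathwise into this almost-sure identity. This is precisely the content of the paper's Proposition~\ref{prop:shift} (``Robustness of shift maps''), which is invoked at exactly this step and whose proof is not routine: it approximates the adapted shift by simple processes taking countably many values (for which the substitution is legitimate piece by piece), shows $L^1$-convergence of the stochastic exponentials $\CE(h_n)\to\CE(h)$ under a Novikov-type bound (available here because the cut-off in Theorem~\ref{theo:shift} makes $\|h\|_\CH$ bounded by a deterministic constant), and then upgrades an identity of expectations into an almost-sure identity via a conditional-expectation and measurability argument relying on the continuity of $\tau$. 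You flag the adapted Girsanov theorem on the abstract Gaussian space as the main obstacle and call the rest routine, but the measure-change part is in fact the easier half; the delicate point is the compatibility $\mod(\omega+h(\omega))=\tau(h(\omega),\mod(\omega))$ for random adapted shifts, which your proposal treats as immediate. Filling this in (by proving a statement like Proposition~\ref{prop:shift}) is necessary for the argument to close.
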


\begin{proof}
Without loss of generality,
we assume that $\Psi(\death) = 0$ and that $\sup_{u \in \bar U} |\Psi(u)| \le 1$.
It is sufficient to show that, for any fixed $u \in U$, one has
\begin{equ}[eq:SFenough]
\lim_{\gamma \to 0}\sup_{\|\bar u - u\|\le \gamma} |\PP \Psi(u) - \PP \Psi(\bar u)| = 0\;. 
\end{equ}
For this, we proceed as follows. Viewing the initial condition $u$ and
any time $t \in (0,1)$ as fixed, we choose a direction $v \in U$
and write $h_{\gamma,t}^{(M)}(\xi,v)$ for the corresponding shift
obtained in Theorem~\ref{theo:shift}, namely
\begin{equ}
h_{\gamma,t}^{(M)}(\xi,v) = h^{(t,M)}(u,u+\gamma v,\xi)\;.
\end{equ}
We also fix some $\eps > 0$.

It follows from Theorem~\ref{theo:shift} that
we can find $M_+,\gamma_+ > 0$ (possibly depending on $t$ and $\xi$) such that 
the identity
\begin{equ}
\Phi_t(u,\xi) = \Phi_t(u + \gamma v, \tau(h_{\gamma,t}^{(M)}(\xi,v),\xi))\;,
\end{equ}
and the bound $\|h_{\gamma,t}^{(M)}(\xi,v)\|_{\CH} \leq \gamma M$
hold for all $\gamma \leq \gamma_+(\xi,t)$, all $M \ge M_+(\xi,t)$, and all $v$ with $\|v\| = 1$.
In particular, as a consequence of \eqref{e:consistency} and \eqref{e:dependency}, one has
\begin{equ}
\Phi\big(u,\xi\big) = \Phi\big(u + \gamma v, \tau(h_{\gamma,t}^{(M)}(\xi,v),\xi)\big)\;,
\end{equ}
since if the trajectories coincide at time  $t$ then they   coincide
for all $T >t$.

Fixing a $u \in U$, $v \in U$ with $\|v\| = 1$, we write $\bar u
= u+\gamma v$  for $\gamma > 0$. We also 
define the sets
\begin{equ}
%<<<<<<< HEAD
%\obar \Omega_{\gamma,t} = \{\omega\,:\, \gamma <
%\gamma_+(\Xi(\omega),t)a\}\;,
\obar \Omega_{\gamma,M,t} = \{\omega\,:\, \gamma \le
\gamma_+(\mod(\omega),t)\quad M \ge M_+(\mod(\omega),t)\}\;,
\end{equ}
as well as
\begin{equ}
 \Omega_{\gamma,M,t} = \{\omega\,:\, \omega + h_{\gamma,t}^{(M)}(\mod(\omega),v)
 \in \obar\Omega_{\gamma,M,t} \}\;.
\end{equ}
(The latter actually also depends on $v$, but this will be irrelevant for our proof since
all of our estimates are uniform in $v$.)
Observe that since both $\gamma_+$ and $M_+$ are finite for all $(u,\xi) \in \CN_t$,
one has that
$\P(\obar \Omega_{\gamma,M,t} )\rightarrow \P((u,\mod) \in \CN_t)$ as $\gamma
\rightarrow 0$ and $M \to \infty$. Furthermore, $\P((u,\mod) \in \CN_t) \rightarrow 1$ as
$t \rightarrow 0$. Combining these facts implies for any $\eps>0$
there exists a $t_0$ and, for every $t \le t_0$, there exist $\gamma_0$ and $M_0$ so that
\begin{equ}[eq:NotOmegaIsSmall]
 \P( \obar\Omega_{\gamma,M,t}^c) \leq \eps\quad  \text{for any $t \in (0,t_0]$, 
 $\gamma \le \gamma_0$ and $M \ge M_0$,}
\end{equ}
where we wrote  $B^c$ for the complement
of an event $B \subset \Omega$.
We will henceforth always assume that $\gamma$ is sufficiently small so that $\gamma M \le 1$.

Next, observe that we can then write
\begin{equs}
\PP\Psi(u) &- \PP\Psi(\bar u) = \E \bigl(\Psi(\Phi(u,\mod)) - \Psi(\Phi(\bar u,\mod))\bigr)\label{e:diff} \\
&\quad = \E \Bigl(\Psi(\Phi(u,\mod)) \bigl(\one_{\Omega_{\gamma,M,t}} + \one_{\Omega_{\gamma,M,t}^c}\bigr)
- \Psi(\Phi(\bar u,\mod))\bigl(\one_{\obar \Omega_{\gamma,M,t}} + \one_{\obar \Omega_{\gamma,M,t}^c}\bigr) \Bigr)\;.
\end{equs}
Using first Girsanov's theorem, then the definition of
$\Omega_{\gamma,M,t}$ combined with Proposition~\ref{prop:shift} below and
finally \eqref{e:defShift}, we obtain the identity
\begin{equs}
\E \Psi(\Phi(\bar u,\mod))&\one_{\obar\Omega_{\gamma,M,t}(\mod)}
= \E \Psi(\Phi(\bar u,\mod(\omega + h^{(M)}_{\gamma,t}(\mod)))) \CE_\gamma(\omega,v) 
\one_{\obar \Omega_{\gamma,M,t}}(\omega + h^{(M)}_{\gamma,t}(\mod)) \\
&= \E \Psi(\Phi(\bar u, \tau(h^{(M)}_{\gamma,t}(\mod),\mod))) \CE_\gamma(\omega,v) 
\one_{\Omega_{\gamma,M,t}}(\omega) \\
&= \E \Psi(\Phi(u, \mod)) \CE_\gamma(\omega,v) 
\one_{\Omega_{\gamma,M,t}}(\omega) \\
&= \E \Psi(\Phi(u, \mod))  
\one_{\Omega_{\gamma,M,t}}(\omega) 
+ \E \Psi(\Phi(u, \mod)) \bigl(\CE_\gamma(\omega,v)-1\bigr)
\one_{\Omega_{\gamma,M,t}}(\omega)\;.
\end{equs}
where  $\CE_\gamma(\omega,v)$ is the exponential martingale given
by Girsanov's theorem.  Rearranging this equality  and using the bound $|\Psi|
\leq 1$ produces
\begin{equ}
 \Big| \E \Psi(\Phi(\bar u,\mod))\one_{\obar \Omega_{\gamma,M,t}}(\mod) - \E \Psi(\Phi(u, \mod))  
\one_{\Omega_{\gamma,M,t}}(\omega) \Big|\le \sqrt{\E \bigl(\CE_\gamma(\omega,v)-1\bigr)^2}
\end{equ}
Since $\|h^{(M)}_{\gamma,t}(\mod,v)\|_\CH \le 1$ by Theorem~\ref{theo:shift}
and the assumption $\gamma M \le 1$, one has
\begin{equ}
\E \bigl(\CE_\gamma(\omega,v)-1\bigr)^2 = \E \CE_\gamma(\omega,v)^2 -1
= \E \exp(\|h^{(M)}_{\gamma,t}(\mod,v)\|_\CH^2) - 1
\le e \E \|h^{(M)}_{\gamma,t}(\mod,v)\|_\CH^2\;.
\end{equ}
Similarly, using again Girsanov's theorem,  if $\mathcal{A}=\{ \omega :
(u,\mod) \in \mathcal{N}\}$ and $\mathcal{\bar A}=\{ \omega :
(\bar u,\mod) \in \mathcal{N}\}$ we also obtain the bound
\begin{equs}
\P (\Omega_{\gamma,M,t}^c) =
\E \bigl(\CE_\gamma \one_{\Omega_{\gamma,M,t}^c}\bigr) -\E \bigl((\CE_\gamma-1) \one_{\Omega_{\gamma,M,t}^c}\bigr)
 &=
\P \bigl(\obar\Omega_{\gamma,M,t}^c\bigr) -\E \bigl((\CE_\gamma-1) \one_{\Omega_{\gamma,M,t}^c}\bigr) \\&\le 
\P (\obar \Omega_{\gamma,M,t}^c)+\bigl(\E (\CE_\gamma-1)^2\bigr)^{1/2}\;.
\end{equs}
Combining all of these bounds with \eqref{e:diff}  and $|\Psi| \leq 1$, we finally obtain that 
\begin{equs}
|\PP\Psi(u) - \PP\Psi(\bar u)| &\le \big|\E \Psi(\Phi(u,\xi)) \one_{\Omega_{\gamma,M,t}}- \E\Psi(\Phi(\bar u,\xi))\one_{\obar \Omega_{\gamma,M,t}}  \big|\\
&\qquad + \P(\Omega_{\gamma,M,t}^c)
+\P(\obar \Omega_{\gamma,M,t}^c)\;,\\
&\le
2 \P (\obar\Omega_{\gamma,M,t}^c) + 2e \sqrt{\E \|h^{(M)}_{\gamma,t}(\mod,v)\|_\CH^2}\;.\label{eq:lastBound}
\end{equs}
Fixing any $\eps>0$ from \eqref{eq:NotOmegaIsSmall} we can find a
$t$ small enough so that the first term above is less than
$\eps/2$ for $M$ sufficiently large and 
$\gamma$ sufficiently small. Then, possibly choosing $\gamma$ even smaller
so that $2e\gamma \le \eps/2$ and $\gamma M \le 1$, the second term
will also be less that $\eps/2$. Hence we conclude that for all
$\gamma$ sufficiently small one has the bound
\begin{equ}
 \sup_{\|\bar u - u\|\le \gamma}  |\PP\Psi(u) - \PP\Psi(\bar u)| \le \eps\;.
\end{equ}
As $\eps$ was arbitrarily, we conclude that
\eqref{eq:SFenough} holds and the proof is complete.
\end{proof}
\begin{remark}
  If one knows that the system is not explosive then one can often obtain a
  quantitative bound in \eqref{eq:lastBound} and show that $\PP \Psi$
  is H\"older continuous  whenever  $\Psi$ is bounded. In particular, if $\P( \mod
  \in \mathcal{N}_t)=1$ for some $t \in (0,1]$ then one is free to fix
  that $t$ in  \eqref{eq:lastBound}. Having fixed $t$, one knows from
  Theorem~\ref{theo:shift} that 
  \begin{equ}
    \sqrt{\E \|h^{(M)}_{\gamma,t}(\mod,v)\|_\CH^2} \leq C \gamma = C
    \|u-\bar u\|\;.
  \end{equ}
Combining this with any estimate on $|\P (\obar\Omega_{\gamma,M,t})
- 1|$ as $\gamma \rightarrow 0$, such as $\P
(\obar\Omega_{\gamma,M,t}) \leq K\gamma^p$ for some $p >0$ will produce
a quantitative estimate. However, as such a quantitative bound is not needed to prove unique
ergodicity of the system,  we do not dwell on this point.
\end{remark}

\begin{remark}
  While we have emphasised the use of \eqref{e:defShift} at points which
  do not blow up, our result also shows that the probability
  of blowing up  varies continuously with the initial condition:
  just apply $\PP$ to the test function $\Psi$ given by $\Psi(\death) = 1$
  and $\Psi(u) = 0$ for $u \in U$. The intuition is clear: if there is
  a compensating shift which shifts the trajectory from a nearby
  initial condition to coincide with a trajectory which is en route to
  exploding, then the nearby point also blows up with positive probability.   
\end{remark}

\subsection{Robustness of shift maps}

In the proof given in the previous subsection, we used the fact that \eqref{e:definition} 
also holds for \textit{random} shifts $h$
as long as they are adapted. The aim of this section is to show that 
this is indeed automatically the case, at least under some Novikov-type
condition. (This could probably be relaxed, but it is trivially satisfied in
our application since our cut-off procedure guarantees that we only consider 
shifts that are bounded by a deterministic
constant.) We will make use of the following fact.

\begin{lemma}\label{lem:RV}
Let $\CX$ and $\CY$ be two
Polish spaces, $\P$ a probability measure on $\CX$, and $X$, $Y$
two measurable maps $\CX \to \CY$. Then $X = Y$ $\P$-almost surely if and only if
the identity
\begin{equ}
\E f(X(x))g(x)
= \E f(Y(x))g(x)\;,
\end{equ}
holds for any two bounded \textit{uniformly continuous} functions $f\colon \CY \to \R$ and 
$g\colon \CX \to \R$.
\end{lemma}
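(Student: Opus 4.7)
The plan is to dispatch the forward direction trivially (if $X=Y$ $\P$-a.s.\ then $f(X(x))g(x)=f(Y(x))g(x)$ for $\P$-a.e.\ $x$, and the integrals agree for any bounded measurable $f,g$), and for the converse proceed in two steps: first show that for each fixed bounded uniformly continuous $f\colon\CY\to\R$ one has $f(X)=f(Y)$ $\P$-a.s., and then upgrade this to $X=Y$ $\P$-a.s.\ by means of a countable point-separating family of such $f$'s.

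For the fixed-$f$ step, I would consider the finite signed measure $\mu_f$ on $\CX$ defined by $\mu_f(A)=\E[(f(X)-f(Y))\one_A]$; by hypothesis $\int g\,d\mu_f=0$ for every bounded uniformly continuous $g\colon\CX\to\R$. Given any closed set $F\subset\CX$, the $1$-Lipschitz (hence bounded uniformly continuous) functions $g_n(x)=(1-n\,d(x,F))_+$ converge pointwise and boundedly to $\one_F$, so bounded convergence gives $\mu_f(F)=0$. Since the closed subsets of $\CX$ form a $\pi$-system generating the Borel $\sigma$-algebra, the $\pi$-$\lambda$ theorem promotes this to $\mu_f\equiv 0$, and therefore $f(X)=f(Y)$ $\P$-almost surely.

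To conclude, I would exploit the fact that $\CY$, being Polish, is separable: fix a countable dense subset $\{y_n\}\subset\CY$ and set $f_n(y)=1\wedge d_\CY(y,y_n)$. The $f_n$ are bounded and $1$-Lipschitz, and they separate the points of $\CY$. The previous step supplies a $\P$-null set $N_n$ off of which $f_n(X)=f_n(Y)$; then $N=\bigcup_n N_n$ is still $\P$-null, and for every $x\notin N$ the equalities $f_n(X(x))=f_n(Y(x))$ for all $n$ force $X(x)=Y(x)$.

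The only step that deserves care is precisely this final inversion of quantifiers, from ``for each $f$, $\P$-a.s.''\ to ``$\P$-a.s., for every $n$'', which is exactly what the countability afforded by separability of $\CY$ makes possible; the rest is a standard $\pi$-$\lambda$ argument combined with the Lipschitz approximation of indicators of closed sets.
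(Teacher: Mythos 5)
The paper states Lemma~\ref{lem:RV} without proof, treating it as a standard fact, so there is no in-paper argument to compare yours against; your blind proof correctly fills that gap. The structure is the standard one and each step checks out: for fixed bounded uniformly continuous $f$, the vanishing of $\int g\,d\mu_f$ for all bounded uniformly continuous $g$, combined with the Lipschitz approximations $g_n(x)=(1-n\,d(x,F))_+$ of indicators of closed sets and a $\pi$-$\lambda$ argument, gives $f(X)=f(Y)$ $\P$-a.s.; and the countable point-separating family $f_n(y)=1\wedge d_\CY(y,y_n)$ built from a dense sequence in the Polish space $\CY$ performs exactly the quantifier inversion you single out as the delicate point. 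One trivial slip: $g_n$ is $n$-Lipschitz rather than $1$-Lipschitz, but since all you use is that it is bounded and uniformly continuous, nothing is affected.
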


The main result of this subsection is the following. Here, for any separable Banach 
space $\CX$, $L^p_\ad(\Omega \times [0,1], \CX)$ denotes the set of $L^p$ functions
from $\Omega \times [0,1]$ into $\CX$ that are measurable with respect to the predictable
$\sigma$-algebra on $\Omega \times [0,1]$ (with respect to the filtration $\{\CF_t\}_t$).
In the particular case $\CX = X_0$, one has of course 
$L^p_\ad(\Omega \times [0,1], \CX) \subset L^p(\Omega \times [0,1], \CX)
\approx L^p(\Omega, \V)$, so that we alternatively view these elements as $\V$-valued random variables.

\begin{proposition}\label{prop:shift}
Let $\mod\colon\Omega \to \CM$ and let $\tau$ be a continuous action of $\V$ on $\CM$
as above. If \eqref{e:definition} holds for every deterministic $h \in \V$, then it also holds
for every predictable $h\colon \Omega \to \V$ satisfying $\E \exp(6\|h\|_\CH^2)< \infty$.
\end{proposition}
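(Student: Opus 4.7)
The plan is to apply Lemma~\ref{lem:RV} to reduce the $\P$-a.s.\ identity $\mod(\omega+h)=\tau(h,\mod(\omega))$ to showing the equality of expectations
\[
\E[f(\mod(\omega+h))\,g(\omega)]=\E[f(\tau(h,\mod(\omega)))\,g(\omega)]
\]
for every bounded uniformly continuous $f\colon\CM\to\R$ and $g\colon\Omega\to\R$, and then to establish this equality by approximation.  More precisely, I would first discretise $h$ by a sequence of simple predictable processes $h_n$ for which the identity transfers immediately from the deterministic hypothesis, and then pass to the limit $n\to\infty$ using Girsanov-based absolute continuity together with the uniform integrability provided by the bound $\E\exp(6\|h\|_\CH^2)<\infty$.

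For the approximation step, I would produce predictable processes $h_n$ taking only finitely many deterministic values in $\V$, say $h_n=\sum_{k=1}^{K_n} h_n^{(k)}\,\mathbf 1_{A_n^{(k)}}$, with each $h_n^{(k)}\in\V$ deterministic and the $A_n^{(k)}$ partitioning $\Omega$ into finitely many measurable pieces, arranged so that $h_n\to h$ in $\V$ $\P$-a.s.\ along a subsequence and $\sup_n\E\exp(6\|h_n\|_\CH^2)<\infty$ (e.g.\ by combining a time discretisation of $h$ with a projection onto a finite net in $X_0$, followed by a stopping argument to preserve the exponential moment bound).  On each $A_n^{(k)}$ the shift $h_n(\omega)$ equals the deterministic constant $h_n^{(k)}$, so the hypothesis applied with $h=h_n^{(k)}$ gives $\mod(\omega+h_n^{(k)})=\tau(h_n^{(k)},\mod(\omega))$ up to a $\P$-null set; taking the finite union over $k$ yields $\mod(\omega+h_n)=\tau(h_n,\mod(\omega))$ $\P$-a.s., and in particular
\[
\E[f(\mod(\omega+h_n))\,g(\omega)]=\E[f(\tau(h_n,\mod(\omega)))\,g(\omega)].
\]

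The final step is to pass to the limit $n\to\infty$ on both sides of this equality.  The right-hand side is the easy side: continuity of $\tau$ gives $\tau(h_n,\mod)\to\tau(h,\mod)$ in probability in $\CM$, and bounded convergence (with the uniform continuity of $f$ and boundedness of $f,g$) yields convergence to $\E[f(\tau(h,\mod(\omega)))g(\omega)]$.  For the left-hand side I would invoke Girsanov's theorem in the form $\E[G(\omega+k)\,\CE(-k)(\omega)]=\E G(\omega)$ for predictable $k$ satisfying Novikov's condition, which gives absolute continuity of the laws of $\mod(\omega+h_n)$ and $\mod(\omega+h)$ with respect to the law of $\mod$, with densities uniformly bounded in $L^p$ for some $p>1$; a Cauchy--Schwarz-type estimate then yields $\E[f(\mod(\omega+h_n))g(\omega)]\to\E[f(\mod(\omega+h))g(\omega)]$, and combining the two convergences with Lemma~\ref{lem:RV} concludes.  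The hard part will be precisely this last convergence: the identity for simple $h_n$ is essentially a tautology, but transferring the limit through the nonlinear $\omega$-dependence of the composition $\mod(\omega+h_n)$ forces us to combine Girsanov-based absolute continuity with uniform $L^p$ control on the compensating densities $\CE(\pm h_n)$, and it is exactly in securing these uniform bounds (via identities of the type $\CE(h)^2=\CE(2h)\exp(\|h\|_\CH^2)$) that the specific exponential moment assumption $\E\exp(6\|h\|_\CH^2)<\infty$ is used.
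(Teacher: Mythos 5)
Your reduction via Lemma~\ref{lem:RV} and the discretisation of $h$ into simple predictable shifts match the start of the paper's argument, but the limit passage on the left-hand side is a genuine gap, and it is exactly where the difficulty of the proposition sits. You want $\E\bigl[f(\mod(\omega+h_n))g(\omega)\bigr]\to\E\bigl[f(\mod(\omega+h))g(\omega)\bigr]$ for an \emph{arbitrary} bounded uniformly continuous $g$ of the unshifted path. Absolute continuity of the laws of $\mod(\omega+h_n)$ and $\mod(\omega+h)$ with respect to the law of $\mod$, even with uniform $L^p$ bounds on the Girsanov densities, only controls the \emph{marginal} laws of these random variables; it says nothing about their joint law with $\omega$, which is precisely what the pairing with $g(\omega)$ probes. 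Since $\mod$ is merely measurable, $h_n\to h$ gives no convergence of $\mod(\omega+h_n(\omega))$ to $\mod(\omega+h(\omega))$ in probability — a priori such continuity in $h$ is essentially equivalent to the statement being proved, since a posteriori $\mod(\omega+h_n)=\tau(h_n,\mod)$ — so the Cauchy--Schwarz step you invoke has nothing to bite on. Girsanov's theorem lets you compute $\E\bigl[f(\mod(\omega+k))\,G(\omega+k)\,\CE(k)\bigr]=\E\bigl[f(\mod)\,G\bigr]$, i.e.\ it handles test functions composed with the \emph{shifted} path, but it does not evaluate $\E\bigl[f(\mod(\omega+h_n))g(\omega)\bigr]$.

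This is why the paper's proof is organised differently: the identity for the simple shifts $h_n$ is tested against $g(\omega+h_n(\omega))\CE(h_n)$ rather than against $g(\omega)$. With that choice the left-hand side equals $\E\bigl[f(\mod(\omega))g(\omega)\bigr]$ for every $n$ by Girsanov, so no limit is needed there, while the right-hand side converges using continuity of $\tau$, uniform continuity of $g$, and the $L^1$-convergence $\CE(h_n)\to\CE(h)$ — this last point is where $\E\exp(6\|h\|_\CH^2)<\infty$ enters. The price is that the resulting identity for $h$ is only tested against functions of $\omega+h(\omega)$, i.e.\ against the $\sigma$-algebra generated by the shift map, so one can only conclude that $f(\mod(\omega+h(\omega)))$ equals the conditional expectation of $f(\tau(h(\omega),\mod(\omega)))$ given that $\sigma$-algebra, under the measure $\CE(h)\,\P$. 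The paper then applies this identity with $f$ replaced by $f^2$ to see that the conditional variance vanishes, hence $f(\tau(h,\mod))$ is measurable with respect to that $\sigma$-algebra and the conditional expectation can be dropped, yielding the almost sure identity. Your proposal is missing both this change of test function and the final measurability/conditional-expectation step; without them the limit you describe cannot be carried out.
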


\begin{proof}
Let $h_n \in L^p_\ad(\Omega \times [0,1], X_0)$ be a sequence of random variables such that
each $h_n$ takes only countably many values in $\V$, and such that the $h_n$ converge to $h$ 
both in $L^2_\ad(\Omega \times [0,1], H_0)$ and in probability in $\V$.
We choose the $h_n$ furthermore such that $\sup_n \E \exp(6\|h_n\|_\CH^2)< \infty$.
This is always possible by the separability of $X_0$ and the density of 
simple processes in $L^p_\ad(\Omega \times [0,1], X_0)$.

Since $h_n$ only takes countably many values, 
the identity
\begin{equ}
  \mod(\omega+ h_n(\omega))=   \tau(h_n(\omega),\mod(\omega))\;,
\end{equ}
holds almost surely for every $n$. Here, assuming that $h_n$ takes the value $h_n^{(k)}$ on
the set $A_k \subset \Omega$ forming a partition of $\Omega$, 
the left hand side should be interpreted as the random variable
which equals $\mod(\omega+ h_n^{(k)})$ on $A_k$ for every $k$.
In particular, for every $n$, we have the identity
\begin{equ}[e:identity]
  \E f(\mod(\omega+ h_n(\omega))) g(\omega+ h_n(\omega)) \CE(h_n)=   \E f(\tau(h_n(\omega),\mod(\omega)))g(\omega+ h_n(\omega))\CE(h_n)\;,
\end{equ}
where we used the notation
\begin{equ}
\CE(h) \eqdef \exp\Big(-\int_0^1 \scal{h(s),d\omega(s)}_\CH- {1\over 2} \int_0^1 \|h(s)\|_\CH^2\,ds\Big)\;.
\end{equ}
It follows from the first and second properties that $\CE(h_n)$ converges 
towards $\CE(h)$ in $L^1(\Omega,\P)$: writing $\CE(h) = \exp(\CI(h))$, it is easy
to verify that
\begin{equs}
\E |\CI(h)&-\CI(h_n)|^2 \le \E \|h-h_n\|_\CH^2 \bigl(1+\|h+h_n\|_\CH^2\bigr) \\
&\lesssim \bigl(\E \|h-h_n\|_\CH^2 \E \bigl(1+\|h\|_\CH + \|h_n\|_\CH\bigr)^6\bigr)^{1/2} 
\lesssim \bigl(\E \|h-h_n\|_\CH^2\bigr)^{1/2} \;.
\end{equs}
We conclude that 
\begin{equs}
\E |\CE(h) &- \CE(h_n)| \le \E |\CI(h)-\CI(h_n)| \bigl(\CE(h) + \CE(h_n)\bigr) \\
&\le \Bigl(2\E |\CI(h)-\CI(h_n)|^2 \E \bigl(\CE(h)^2 + \CE(h_n)^2\bigr)\Bigr)^{1/2}
\lesssim \bigl(\E |\CI(h)-\CI(h_n)|^2\bigr)^{1/4}\;,
\end{equs}
which does indeed converge to $0$ by assumption.

As a consequence of the uniform continuity of $g$ and $\tau$ and the 
convergence of
$h_n$ to $h$, the right hand side of \eqref{e:identity} converges as $n\to \infty$ to
\begin{equ}[e:rhs]
\E f(\tau(h(\omega),\mod(\omega)))g(\omega + h(\omega))\CE(h)\;.
\end{equ}
On the other hand, for any fixed $n$, we can use Girsanov's theorem
to show that the left hand side equals
\begin{equ}
 \E f(\mod(\omega)) g(\omega)\;,
\end{equ}
independently of $n$. Applying Girsanov's theorem in the opposite direction and comparing this to 
\eqref{e:rhs}, we conclude that
\begin{equ}
 \E f(\mod(\omega+ h(\omega))) g(\omega+ h(\omega)) \CE(h)
 = 
 \E f(\tau(h(\omega),\mod(\omega)))g(\omega + h(\omega))\CE(h)\;.
\end{equ}
Writing $\tilde \E$ for expectations under the probability measure $\tilde \P = \CE(h)\,\P$
and writing $\sigma_h$ for the $\sigma$-algebra generated by the shift map
$T_h$, it follows from Lemma~\ref{lem:RV} that the identity
\begin{equ}
f(\mod(\omega+ h(\omega)))
= \tilde \E \bigl(f(\tau(h(\omega),\mod(\omega)))\,|\, \tilde \sigma\bigr)\;,
\end{equ}
holds $\tilde \P$-almost surely.
Since this holds for every bounded uniformly continuous function $f$, it also holds for its square,
so that 
\begin{equ}
\tilde \E \bigl(f(\tau(h(\omega),\mod(\omega)))^2\,|\, \tilde \sigma\bigr)
= 
\bigl(\tilde \E \bigl(f(\tau(h(\omega),\mod(\omega)))\,|\, \tilde \sigma\bigr)\bigr)^2\;.
\end{equ}
This implies that $\omega \mapsto f(\tau(h(\omega),\mod(\omega)))$ is
$\tilde\sigma$-measurable, thus concluding the proof.
\end{proof}

\subsection{A few consequences}

Recall that every invariant measure for a Markov operator
$\PP$ on a Polish space can be written as a superposition of ergodic invariant measures.
In our case, the Dirac mass located on $\death$ is always invariant because of the property
$\Phi(\death,\xi) = \death$, but one might be interested in asking whether $\PP$ also
has a unique invariant measure located on $U$. The main ingredient for such a statement
is the fact \cite{DPZ} that the strong Feller property yields the following strengthening of the
fact that distinct ergodic invariant measures are mutually singular.

\begin{proposition}\label{prop:SF}
Let $\PP$ be a strong Feller Markov operator on a Polish space and let 
$\mu \neq \nu$ be ergodic invariant probability measures for $\PP$. Then,
the topological supports of $\mu$ and $\nu$ are disjoint. 
\end{proposition}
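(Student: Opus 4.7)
The plan is to combine the classical fact that distinct ergodic invariant measures are mutually singular with the strong Feller property, which promotes an almost-everywhere statement into a statement on the topological support.

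First, I would invoke the standard ergodic-theoretic fact that two distinct ergodic invariant probability measures $\mu \ne \nu$ for a Markov operator are mutually singular. (This follows from a Birkhoff-type argument: for any bounded measurable $\Psi$ with $\int \Psi\,d\mu \ne \int \Psi\,d\nu$, the set of $x$ for which the ergodic averages of $\Psi$ along the chain starting at $x$ converge to $\int \Psi\,d\mu$ has full $\mu$-measure and zero $\nu$-measure.) In particular, there exists a Borel set $A \subset \CX$ with $\mu(A) = 1$ and $\nu(A) = 0$.

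Second, consider the function $f(x) \eqdef \PP \one_A(x)$. From the invariance of $\mu$ one has
\begin{equ}
\int f\,d\mu = \int \PP \one_A \,d\mu = \int \one_A\,d\mu = \mu(A) = 1\;,
\end{equ}
and since $0 \le f \le 1$ pointwise, this forces $f = 1$ for $\mu$-almost every $x$. The analogous computation with $\nu$ and the fact that $\nu(A) = 0$ yields $f = 0$ for $\nu$-almost every $x$.

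Third, by the strong Feller property applied to the bounded measurable function $\one_A$, the function $f$ is continuous. The sets $C_1 = \{x : f(x) = 1\}$ and $C_0 = \{x : f(x) = 0\}$ are therefore closed and disjoint. Since $C_1$ has full $\mu$-measure, it contains $\supp \mu$; since $C_0$ has full $\nu$-measure, it contains $\supp \nu$. Hence $\supp \mu \cap \supp \nu = \emptyset$, as claimed.

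The only step with any real content is the first one, but this is entirely classical and independent of the strong Feller property; once the mutually singular pair $(A, A^c)$ has been produced, the strong Feller property does all the work by turning the measurable indicator of this separation into a continuous one.
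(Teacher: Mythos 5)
Your proof is correct. Note that the paper itself does not prove this proposition: it is quoted as a known fact with a reference to Da Prato--Zabczyk, so there is no in-paper argument to compare against. Your route is the cleanest self-contained one: mutual singularity of distinct ergodic invariant measures gives a Borel set $A$ with $\mu(A)=1$, $\nu(A)=0$; invariance gives $\int \PP\one_A\,d\mu = 1$ and $\int \PP\one_A\,d\nu = 0$, hence $\PP\one_A = 1$ $\mu$-a.e.\ and $=0$ $\nu$-a.e.; and the strong Feller property makes $\PP\one_A$ continuous, so the closed disjoint level sets $\{\PP\one_A = 1\}$ and $\{\PP\one_A = 0\}$ contain $\supp\mu$ and $\supp\nu$ respectively (using that on a Polish space the support is contained in every closed set of full measure). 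The argument usually given in the literature is slightly different: one first shows that strong Feller implies $\PP(x,\cdot)\ll\mu$ for every $x\in\supp\mu$, so a common point $z\in\supp\mu\cap\supp\nu$ would force $\PP(z,A^c)=0$ and $\PP(z,A)=0$ simultaneously, a contradiction. Your version avoids that intermediate absolute-continuity lemma and uses only the continuity of $\PP\one_A$ together with invariance, which is arguably more elementary; the standard lemma, on the other hand, is reusable elsewhere (e.g.\ it is what underlies the existence of the dominating measure $\lambda$ invoked in the proof of Corollary~\ref{cor:unique}). The only step you leave to the classics, mutual singularity of distinct ergodic invariant measures, is indeed standard and independent of the strong Feller property, so the proof is complete as written.
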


We immediately obtain the following two corollaries.

\begin{corollary}
In the context of Section~\ref{sec:setup}, assume that, for
every open set $A \subset U$ and every $u \in U$, one has
\begin{equ}[e:supp]
\P \bigl(\{\omega\,:\, \Phi(u,\mod(\omega)) \in A\}\bigr) > 0\;.
\end{equ}
Then, $\PP$ admits at most one invariant measure $\mu$ with $\mu(U) = 1$.
\end{corollary}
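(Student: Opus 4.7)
The plan is to argue by contradiction, combining the ergodic decomposition of invariant measures with Proposition~\ref{prop:SF} and the irreducibility hypothesis \eqref{e:supp}. Assume two distinct invariant probability measures $\mu_1 \neq \mu_2$ with $\mu_i(U) = 1$ for $i = 1,2$. Since $\Phi(\death,\xi) = \death$, the only ergodic invariant probability measure charging $\{\death\}$ is $\delta_\death$ itself, so the ergodic decomposition of any invariant measure $\mu$ with $\mu(U) = 1$ is supported on ergodic invariant measures $\nu$ satisfying $\nu(U) = 1$. Distinctness of $\mu_1$ and $\mu_2$, combined with uniqueness of the ergodic decomposition, therefore produces two distinct ergodic invariant probability measures $\nu_1 \neq \nu_2$, both with $\nu_i(U) = 1$.

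By Theorem~\ref{theo:main}, $\PP$ satisfies the strong Feller property, so Proposition~\ref{prop:SF} applies and the topological supports $S_i = \operatorname{supp}(\nu_i) \subset U$ are disjoint. Each $S_i$ is nonempty (it supports a probability measure), so $A \eqdef U \setminus S_1$ is a nonempty open subset of $U$ since it contains $S_2$. The hypothesis \eqref{e:supp} then gives $\PP\one_A(u) = \P(\Phi(u,\mod) \in A) > 0$ for every $u \in U$, and invariance of $\nu_1$ yields
\begin{equ}
\nu_1(A) = \int_U \PP\one_A(u)\, \nu_1(du) > 0\;,
\end{equ}
since the integrand is strictly positive pointwise on $U$. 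On the other hand, by the very definition of topological support one has $\nu_1(A) = 0$, a contradiction.

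I expect no substantive obstacle: the main ingredients, namely the strong Feller property and the consequent disjointness of supports of distinct ergodic measures, are already established as Theorem~\ref{theo:main} and Proposition~\ref{prop:SF}, and the reduction to ergodic components is routine because the hypothesis $\mu_i(U) = 1$ eliminates the absorbing component $\delta_\death$ from each ergodic decomposition. The argument is essentially the classical Doob--Khasminskii scheme applied to the live part of the state space.
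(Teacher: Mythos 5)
Your proof is correct and follows essentially the same route as the paper: the paper likewise observes that \eqref{e:supp} forces every invariant measure concentrated on $U$ to charge every nonempty open set (i.e.\ to have full support) and then invokes Proposition~\ref{prop:SF} on the ergodic components, which is exactly your contradiction with the disjointness of $\operatorname{supp}\nu_1$ and $\operatorname{supp}\nu_2$. The reduction to ergodic components concentrated on $U$ is handled implicitly in the paper and you have simply spelled it out.
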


\begin{proof}
The condition \eqref{e:supp} implies that every invariant measure located on $U$
has full support, so that the claim follows from Proposition~\ref{prop:SF}.
\end{proof}

The next corollary is useful in situations where an invariant measure is
already known, as is the case for the KPZ equation \cite{FQ}, 
the 2D Navier-Stokes equations driven by space-time white noise \cite{DPD},
the dynamical $P(\Phi)_2$ model \cite{AlbRock91,DPD2}, as well as the dynamical
$\Phi^4_3$ model \cite{Konstantin}.

\begin{corollary}\label{cor:unique}
  In the context of Section~\ref{sec:setup}, assume that $\PP$ admits an invariant measure
$\mu$ with $\supp \mu = U$. Then $\mu$ is the only invariant measure 
concentrated on $U$.
\end{corollary}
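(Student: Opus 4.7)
The plan is to first prove that $\mu$ is itself ergodic, and then use Proposition~\ref{prop:SF} together with the ergodic decomposition to rule out any second invariant measure concentrated on $U$.

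\emph{Ergodicity of $\mu$.} The first step is to show that every measurable $A \subset U$ with $\PP\mathbf{1}_A = \mathbf{1}_A$ holding $\mu$-almost surely satisfies $\mu(A) \in \{0,1\}$. By Theorem~\ref{theo:main} the function $\PP\mathbf{1}_A$ is continuous on $\bar U$. It takes values in $\{0,1\}$ on the $\mu$-conull set $\{\PP\mathbf{1}_A = \mathbf{1}_A\}$, which is dense in $U$ since $\supp\mu = U$. Continuity therefore forces $\PP\mathbf{1}_A(U) \subset \{0,1\}$. Because $U$ is a Banach space and hence connected, the continuous $\{0,1\}$-valued function $\PP\mathbf{1}_A\restr U$ is constant. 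Integrating against $\mu$ identifies this constant with $\mu(A)$, so $\mu(A) \in \{0,1\}$ and $\mu$ is ergodic.

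\emph{Uniqueness.} Next I would take any invariant probability measure $\nu$ with $\nu(U) = 1$ and write its ergodic decomposition $\nu = \int \nu_\beta\,d\pi_\nu(\beta)$. Since $\{\death\}$ is clopen in $\bar U$ and $\nu$ does not charge it, $\pi_\nu$-almost every $\nu_\beta$ is concentrated on $U$ and in particular ergodic. If $\nu \ne \mu$, then a positive $\pi_\nu$-mass of indices $\beta$ must satisfy $\nu_\beta \ne \mu$. Proposition~\ref{prop:SF} applied to the ergodic pair $(\nu_\beta, \mu)$ then yields $\supp\nu_\beta \cap \supp\mu = \emptyset$, contradicting the fact that $\supp\nu_\beta$ is a nonempty subset of $U = \supp\mu$. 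Hence $\nu = \mu$.

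\emph{Main obstacle.} The genuinely delicate point is establishing the ergodicity of $\mu$. Proposition~\ref{prop:SF} only separates two distinct ergodic measures, so without this extra step one could in principle imagine a non-ergodic invariant $\mu$ of full support whose ergodic components could be rebalanced into distinct invariant measures still concentrated on $U$. The mechanism that rules out this pathology is the classical Doob-type argument: the strong Feller property smooths $\mathbf{1}_A$ into a continuous function which, being $\{0,1\}$-valued on a dense subset of the connected space $U$, must be globally constant.
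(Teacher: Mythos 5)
Your proof is correct, but it reaches the key intermediate fact --- the ergodicity of $\mu$ --- by a genuinely different route than the paper. The paper first shows that $\PP$ admits at most countably many ergodic invariant measures (using that the strong Feller property provides a common dominating measure $\lambda$ for all transition probabilities, plus a $\sigma$-compactness argument), then writes a hypothetically non-ergodic $\mu$ as a countable combination of ergodic components whose supports are closed and, by Proposition~\ref{prop:SF}, pairwise disjoint; since these supports must cover the connected space $U$, the decomposition can have only one term. Your argument instead proves ergodicity directly by the classical Doob--Khasminskii mechanism: for a set $A$ with $\PP\mathbf{1}_A=\mathbf{1}_A$ $\mu$-a.s., the strong Feller property (Theorem~\ref{theo:main}) makes $\PP\mathbf{1}_A$ continuous, the full support of $\mu$ makes the conull set where it is $\{0,1\}$-valued dense, and connectedness of $U$ forces constancy, so $\mu(A)\in\{0,1\}$. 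This buys you a more elementary argument: you avoid both the countability step and the slightly delicate point, implicit in the paper, that a connected complete metric space cannot be partitioned into countably many ($\ge 2$) disjoint nonempty closed sets; the price is that you invoke the standard equivalence between extremality of $\mu$ and triviality of $\PP$-invariant sets, whereas the paper's route yields the extra (and independently useful) information that there are at most countably many ergodic invariant measures. Your concluding step (ergodic decomposition of a competitor $\nu$, discarding the component at $\death$, and applying Proposition~\ref{prop:SF} to each ergodic component against the full-support measure $\mu$) is essentially the same finishing move the paper leaves implicit; the only detail worth recording is that restricting to invariant sets $A\subset U$ is harmless because invariance of $\mu$ gives $\PP(x,\{\death\})=0$ for $\mu$-a.e.\ $x$.
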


\begin{proof}
Since this result does not appear to be easy to find in the literature, we give
a self-contained proof.
We first show that $\PP$ can have at most countably many ergodic invariant
measures as a consequence of the strong Feller property. Recall that the 
strong Feller property implies the existence of a probability measure $\lambda$ such
that the transition probabilities $\PP(u, \cdot)$ are absolutely continuous
with respect to $\lambda$ for every $u \in U$. (See for example \cite[Lem.~1.6.4]{NotesNE}.) 
In particular, by Prokhorov's theorem, 
there exists a $\sigma$-compact (i.e.\ countable union of compact)
set $K \subset U$ such that $\PP(u, K) = 1$ for every $u \in U$.

The strong Feller property also implies that for every $u \in U$ there exists a
neighbourhood $N_u$ of $u$ such that there exists at most one ergodic invariant measure
$\mu_u$ for with $\supp \mu \cap N_u \neq \emptyset$. By the $\sigma$-compactness of $K$,
we can find a sequence $\{u_n\}_{n \in \N}$ so that $K \subset \bigcup_{n \in \N} N_{u_n}$.
Since every invariant measure is concentrated on $K$, the claim follows.

Assume now by contradiction that $\mu$ is not ergodic. Then, by the above, we can write
$\mu = \sum_{n} p_n \mu_n$ for at most countably many distinct ergodic invariant
measures $\mu_n$. Furthermore, by Proposition~\ref{prop:SF}, the supports of the $\mu_n$
are all disjoint. Since the supports are furthermore closed, their union has to be
all of $U$ by our assumption on $\mu$, and $U$ is connected, we immediately conclude that
the sum can have only one term.
\end{proof}

\section{Application to singular SPDEs}
\label{sec:reg}

In this section, we show how to apply the previous abstract result to
solutions to singular stochastic PDEs of the type studied in \cite{reg}.
We henceforth assume that we are in the setting of \cite[Sec.~7.3]{reg}
which we summarise in the next subsection.

\subsection{A very general setting}

We are working with an ambient regularity structure
$(\CT,\CG)$ which contains the usual polynomial regularity structure 
$(\bar \CT, \bar \CG)$ on $\R^{d+1}$ with scaling $\s = (2q,\bar \s)$ for some scaling $\bar \s$ of $\R^d$.
In particular, we have a distinguished basis vector $\one \in \bar \CT$ of degree $0$ which 
represents the constant functions.
We assume that it is also endowed with an abstract integration map $\CI$ of order $2q$
as well as a convolution operator $\CP$ 
(associated to $\CI$ as in \cite[Sec.~4]{reg}), obtained from the linear 
evolution problem associated to a homogeneous (with respect to the scaling $\bar \s$) 
elliptic differential operator $\CL$
of order $2q$ with constant coefficients on $\R^d$. In particular, one has
\begin{equ}
\CR \CP \PPhi = P \star \CR \PPhi\;,
\end{equ}
for any $\PPhi \in \CD^\gamma$ (taking values in the domain of $\CI$) with $\gamma > 0$
and for $P$ the Green's function of $\d_t - \CL$. 
A very important property of the kernel $P$ we will use in the sequel is that it is 
non-anticipative in the sense that $P(t,x) = 0$ for $t < 0$.
Henceforth, we will also assume that 
we are given a lattice $L$ on $\R^d$ with compact fundamental domain 
and that all quantities of interest are periodic with respect 
to $L$. For example, we simply write $\CC^\alpha$ for the space of H\"older continuous
functions of regularity $\alpha$ that are $L$-periodic, without specifically mentioning $L$.
The same is true for our spaces $\CD^{\gamma,\eta}$ of modelled distributions.

As usual, we write $\MM$ for the space of all admissible models for $(\CT,\CG)$ 
and the scaling $\s$ that are furthermore periodic (in the sense of \cite[Def.~3.33]{reg}) with respect
to the lattice $L$. Here, a model $(\Pi,\Gamma)$ is said to be admissible if
\begin{claim}
\item it agrees with the polynomial model on $\bar \CT$; 
\item it realises $K$ for $\CI$ (in the sense of \cite[Def.~5.9]{reg}) for a kernel 
$K$ such that $P = K+R$ where $R$ is smooth and $K$ satisfies \cite[Ass.~5.1\,\&\,5.4]{reg}.
\end{claim}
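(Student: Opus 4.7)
The plan is to verify that both admissibility conditions can be realised concretely for the operator $\partial_t - \CL$ in our setting. The first item is purely a normalisation on the polynomial sector $\bar\CT$: one mandates
\[
(\Pi_x X^k)(y) = (y-x)^k\;,\qquad \Gamma_{xy} X^k = (X + (y-x))^k\;,
\]
with the usual multi-index conventions adapted to the scaling $\s$. Checking that this is a bona fide model on $\bar\CT$ is a direct algebraic-analytic computation: the group property of $\Gamma$ on the polynomial sector follows from the binomial theorem, and the analytic bounds in the definition of a model hold with trivial constants since polynomials are smooth. So the first item is just recording that our model on $\bar\CT$ is the canonical one, and no existence question arises.

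For the second item, the nontrivial content is the existence of a decomposition $P = K + R$ of the Green's function of $\partial_t - \CL$ such that $R$ is smooth and $K$ satisfies \cite[Ass.~5.1\,\&\,5.4]{reg}. The construction I would use is standard: pick a smooth cutoff $\chi\colon \R^{d+1}\to[0,1]$ that equals $1$ in a neighbourhood of the origin and is supported in the $\s$-unit ball, and set $K = \chi P$, $R = (1-\chi) P$. Because $\CL$ is a constant-coefficient homogeneous elliptic operator of order $2q$, the parabolic kernel $P$ is smooth on $\R^{d+1}\setminus\{0\}$, so $R$ is globally smooth; the non-anticipativity $P(t,x)=0$ for $t<0$ passes to $K$ provided $\chi$ is chosen compatibly. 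A useful feature is the scaling $P(\lambda^{2q}t,\lambda^{\bar\s}x) = \lambda^{-|\bar\s|}P(t,x)$, which will drive the quantitative bounds.

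To verify Assumption~5.1 of \cite{reg}, I would exhibit a dyadic decomposition $K = \sum_{n\ge 0} K_n$ with each $K_n$ smooth, supported in an $\s$-ball of radius $\sim 2^{-n}$, and satisfying derivative bounds of the form $|D^k K_n(z)| \lesssim 2^{n(|\s| + |k|_\s - 2q)}$, by applying a Paley-type decomposition to $\chi P$ in dyadic $\s$-annuli around the origin and invoking the homogeneity of $P$. For Assumption~5.4, which asks that a prescribed finite set of polynomial moments of $K$ vanish, I would subtract a finite linear combination $\sum_{|k|_\s < r} c_k \varphi_k$ of fixed smooth bumps (supported in $\{t\ge 0\}$ and in a $\s$-ball strictly larger than the support of $\chi P$) with coefficients $c_k$ chosen to kill the offending moments, and absorb the subtracted piece into $R$, which remains smooth.

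The main obstacle I anticipate is combining the three requirements simultaneously: the sharp dyadic bounds, the polynomial annihilation, and the non-anticipativity. The moment-killing bumps must live in $\{t\ge 0\}$, and their contributions must be distributed across scales so as not to spoil the derivative estimates at each dyadic level. This is essentially carried out in \cite{reg} for the heat kernel, and the argument adapts to any constant-coefficient elliptic $\CL$ of order $2q$ by tracking how homogeneity interacts with the cutoff; periodicity with respect to $L$ is not a property of $K$ itself (which is compactly supported on $\R^{d+1}$) but is built into the $L$-periodic reconstruction of the random model $\mod$ from the noise and is handled separately.
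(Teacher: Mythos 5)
Your proposal is correct and amounts to the paper's own justification: the two displayed items are the \emph{definition} of an admissible model, and the only genuine claim --- that a decomposition $P=K+R$ exists with $R$ smooth and $K$ satisfying \cite[Ass.~5.1\,\&\,5.4]{reg} --- is exactly what the paper settles by invoking \cite[Lem.~5.5]{reg}, whose proof is precisely the cutoff, dyadic decomposition and moment-killing construction you sketch (with non-anticipativity preserved as you indicate). One cosmetic remark: with the conventions of \cite{reg} one has $\Gamma_{xy}X^k=(X+(x-y)\one)^k$ rather than $(X+(y-x))^k$, but since the first item is purely a normalisation this changes nothing.
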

Such a decomposition for $P$ exists by \cite[Lem.~5.5]{reg}.

\begin{remark}
Note that the operator $\CP$ here is equal to the operator $\CK_{\bar \gamma} + R_\gamma \CR$
in \cite[Thm~7.8]{reg}.
\end{remark}

\begin{remark}
In general, one can allow for several operators $\CI^{(k)}$ with $k$ taking values in 
some finite index set, associated to integral kernels $P^{(k)}$. 
All of the results in this section carry over to this more general setting without
any substantial change, but we prefer to stick to the scalar-valued case to simplify notations.
\end{remark}

We also assume that we are given two finite-dimensional sectors $V$ and $\bar V$ such that
\begin{equ}[e:propSectors]
\bar \CT \subset V \subset \bar \CT \oplus \CT_{\ge \zeta}\;,\qquad
\bar \CT \subset \bar V \subset \bar \CT \oplus \CT_{\ge \bar \zeta}\;,
\end{equ}
for some $0 < \zeta \le \bar \zeta + 2q$.
We also fix from now on exponents $\gamma, \bar \gamma$ such 
that $\bar \gamma + 2q > \gamma \ge \bar \gamma > 0$, such that $\bar \gamma \le \zeta$, and such that 
$\CQ_{< \gamma} \CI \bar V_{<\bar \gamma} \subset V_{<\gamma}$, where $\CQ_{< \gamma}$ denotes projection
onto $\CT_{<\gamma}$.

To describe the nonlinearity of our evolution problem, we 
assume that we are given a distinguished element $N \in \CT$ such that
$\Gamma N = N$ for every $\Gamma$ belonging to the
structure group $\CG$.
In particular, as a consequence of the fact that $N$ is invariant under $\CG$ and
$V$ contains $\bar \CT$, the subspace $\hat V \subset \CT$ given by
\begin{equ}
\hat V = V + \R\CI(N)\;,
\end{equ}
is again a sector. 
In some situations, the choice $N = 0$ is possible, but we will see in 
Section~\ref{sec:Phi43} below that one sometimes needs to make a different choice
in order to remain compatible with the condition $\zeta>0$ with $\zeta$ appearing
in the properties of $V$.
Given this sector $\hat V$, we make the following structural assumption
on both our models and our nonlinearity.

\begin{assumption}\label{ass:canonical}
We are given a continuous ``canonical lift map'' $\LL \colon \CC^\infty(\R\times \R^d) \to \MM$
as well as a finite-dimensional Lie group $\RR$, together with a continuous
action $M$ of $\RR$ onto $\MM$.

We are also given a map $F \colon \hat V_{<\gamma} \to \bar V_{<\bar \gamma}$ such that the 
corresponding composition operator 
is strongly locally Lipschitz continuous (in the sense of \cite[Sec.~7.3]{reg})
from $\CD^{\gamma,\eta}$ to $\CD^{\bar \gamma, \bar \eta}$, locally uniformly 
over the underlying admissible model, and for some exponents $\eta, \bar \eta > -2q$
such that $(\bar \eta \wedge \bar \zeta) + 2q > (\eta \vee 0)$
and such that $\eta \le \zeta$.
\end{assumption}

The first part of this assumption leads us to the following definition of a ``nice'' model.

\begin{definition}
An admissible model $\PPi = (\Pi,\Gamma) \in \MM$ for $(\CT,\CG)$ is \textit{nice} if
there exist $\noise_n \in \CC^\infty(\R\times \R^d)$ and $g_n \in \RR$ such that 
$\PPi = \lim_{n \to \infty} M_{g_n}\LL(\noise_n)$ 
and furthermore the distribution $P \star \Pi_z N$ 
belongs to $\CC(\R, \CC^{\eta})$.
\end{definition}

Write $\CM$ for the closure of all smooth and nice models in the 
space of nice admissible models for the regularity structure $(\CT,\CG)$. Setting $I_0 = [-2,3]$, 
we endow 
$\CM$ with the ``seminorm'' given by 
\begin{equs}
\$(\Pi,\Gamma)\$ &= \sup_{z,\phi,\lambda,\alpha,\tau} \lambda^{-\alpha} |(\Pi_z\tau)(\phi_z^\lambda)|
+ \sup_{z,\bar z,\alpha,\beta,\tau} |z-\bar z|_\s^{-(\alpha-\beta)} \|\Gamma_{z\bar z}\tau\|_{\CT_\beta}\\
&\qquad + \|P\star\Pi_0 N\|_{\CC(I_0, \CC^{\eta})}\;,
\end{equs}
as well as the corresponding distance function as in \cite[Eq.~2.17]{reg}.
Here,
the supremum over $z$, as well as the one over $z,\bar z$ run over 
$I_0 \times \R^d$, while the suprema over all other variables are as in \cite[Def.~2.17]{reg}.
We also identify elements that are at distance $0$, thus turning $\CM$ into a separable
metric space. Although, strictly speaking, an element of $\CM$ is now an equivalence class of
models, all of the operations we will ever make use of only ever concern 
modelled distributions defined on $[0,1] \times \R^d$ (which are canonically 
extended to be $0$ outside), so that both $\CP$ and the reconstruction operator are
well-defined and do not depend on the representative in $\CM$.

Writing $(\Omega,\P)$ for a filtered Gaussian probability space as in Section~\ref{sec:setup},
we furthermore assume without loss of generality that $\Omega$ is of the form 
$\CC^{-1/3}(\R,H)$ for some separable Hilbert space $H$ containing $H_0$ as a dense subspace
and such that both $H$ and $H_0$ are canonically identified with some space of
($L$-periodic) functions / distributions
on $\R^d$ in the sense that one has continuous and dense embeddings
\begin{equ}
\CC^\infty(\R^d) \subset H_0 \subset H \subset \CD'(\R^d)\;.
\end{equ}
In particular, elements of $\Omega$ can be viewed as distributions on $\R^{d+1}$.
We then make the following assumption.

\begin{assumption}\label{ass:model}
For every compactly supported mollifier $\rho \in \CC_0^\infty(\R^{d+1})$
there exists a sequence $g_\eps \in \RR$ such that the sequence of random models
$M_{g_\eps} \LL(\rho^\eps \star \omega)$ converges in probability in $\CM$ to a limiting
random model $\mod(\omega)$.

Furthermore, the model $\mod = (\Pi,\Gamma)$ is such that, for every $\bar t \le t \in [0,1]$,
the random variables $\bigl(\Pi_z \tau\bigr)(\phi_z)$ and $\Gamma_{z\bar z}$ are
$\CF_t$-measurable for every $\phi \in \CC_0^\infty$ with support in $\R_-\times \R^d$.
Here, the time coordinates of $z, \bar z$ are given by $t,\bar t$.
\end{assumption}

\begin{remark}
In all the examples we have in mind, the sequence $g_\eps$ can be chosen in
such a way that the limiting random model $\mod$ does not depend on the choice
of mollifier $\rho$. This property however is not essential for our analysis.
\end{remark}

It then follows from a combination of \cite[Thm~7.8]{reg} and the argument
given in \cite[Sec.~9]{reg} 
that, for every periodic initial condition $\Phi_0 \in \CC^\eta$
and every nice periodic admissible model, one has a maximal 
solution $\PPhi$ to the equation
\begin{equ}[e:SPDE]
\PPhi = \CP \one_+ \bigl(F(\PPhi) + N\bigr) + P \Phi_0\;,
\end{equ}
where $\one_+$ is the multiplication operator by the indicator function of the
set $\{(t,x)\,:\, t > 0\}$ and $P\Phi_0$ denotes the solution to the linearised
problem, viewed via its truncated Taylor expansion as an element in 
$\CD^{\gamma,\eta}$ with values in the usual Taylor polynomials $\bar \CT$.

In other words, there exists $T \in (0,2]$ (depending on the initial condition $\Phi_0$ and on the
underlying model $\PPi$) such that, for every $t < T$, there exists a unique
element $\PPhi \in \CD^{\gamma,\eta}((-\infty,t] \times \R^d, \hat V_{<\gamma})$ vanishing on 
$\R_- \times \R^d$ and such that 
the identity \eqref{e:SPDE} holds on $(-\infty,t] \times \R^d$. 
As a consequence of the model being nice and the condition $\zeta > 0$, 
the solution is furthermore such that
$\CR\PPhi$ takes values in $\CC((0,T), \CC^\eta)$ and, if
$T \le 1$, then $\lim_{t \to T} \|\big(\CR\PPhi\big)(t,\cdot)\|_{\CC^\eta} = \infty$.

\begin{remark}
The upper bound $2$ we impose on $T$ is of course completely arbitrary, as long as it is 
larger than the final time $1$ we are really interested in. However, since
our models are undefined outside the time interval $I_0$, we have to put some 
threshold otherwise the equation makes no longer sense.
\end{remark}

In particular, provided that we set $U = \CC^\eta$ (or rather the closure of smooth functions
in the $\CC^\eta$-norm so that $U$ is separable) and we similarly consider solutions
starting with an initial condition at time $s$ instead of time $0$, this does indeed yield
a collection of flow maps $\Phi_{s,t}\colon \bar U \times \MM \to \bar U$ 
which is consistent in the sense
of \eqref{e:consistency} as a consequence of \cite[Prop.~7.11]{reg}.
Here, we set
$\Phi_{s,t}(\Phi_s,\PPi) = \death$ if and only if the maximal existence time $T$
for the solution to \eqref{e:SPDE} with initial condition $\Phi_s$ at time $s$ and
model $\PPi \in \MM$ satisfies $T \le t$.

\subsection{A nice class of nonlinearities}

We first show that, under suitable assumptions on $F$, the solution
to \eqref{e:SPDE} also satisfies the remaining assumptions of Section~\ref{sec:setup}.
Our main assumption is that solutions to \eqref{e:SPDE} driven by nice models
are limits of solutions to stochastic PDEs with local nonlinearities.
Given a smooth function $u$ on $\R^{d+1}$ we write $G(\d^* u)$
for any expression of the form $G(u,\d_i u,\d^2_{ij} u, \ldots)$
which depends on \textit{finitely many} spatial derivatives of $u$. In our context,
this will only ever include derivatives of order strictly less than the order $2q$ of
the operator $\CL$. We then make the following assumption on $F$:

\begin{assumption}\label{ass:repr}
For every $g \in \RR$ there exist locally Lipschitz continuous functions 
$F_g^{(1)}$ and $F_g^{(2)}$ such that, for every model of the form
$\PPi = M_g \LL(\noise)$ with $\noise \in \CC^\infty(\R^{d+1})$, the solution $\PPhi$
to \eqref{e:SPDE} is such that $u = \CR \PPhi$ solves the PDE
\begin{equ}
\d_t u = \CL u + F_g^{(1)}(\d^*u)+ F_g^{(2)}(\d^*u)\noise\;,\qquad u_0 = \Phi_0\;.
\end{equ}
\end{assumption}

We will furthermore impose the following regularity 
assumption, where the various objects and
exponents are as introduced above.

\begin{assumption}\label{ass:diff}
The map $F$ is twice differentiable
as a map between the finite-dimensional vector spaces $\hat V_{<\gamma}$ 
and $\bar V_{<\bar \gamma}$. Its derivative $DF$ is such that
if $\PPhi$ and $J$ belong to $\CD^{\gamma,\eta}$ with values in $\hat V_{<\gamma}$
and $V_{<\gamma}$ respectively, then  the map
\begin{equ}
z\mapsto  \bigl(D F(\PPhi(z))\bigr)J(z)
\end{equ}
belongs to $\CD^{\bar \gamma,\bar \eta}$ and the map 
$(\PPhi,J) \mapsto D F(\PPhi)J$ is strongly locally Lipschitz between these spaces.
\end{assumption}
See \cite[Sec.~7.3]{reg} for the definition of strongly locally Lipschitz in this context.
We will also assume henceforth that we are given a continuous 
action $\tau$ of a suitable space of shifts onto the space $\MM$ 
of nice admissible models for $(\CT,\CG)$. This is formulated more precisely in the following 
structural assumption
on our equation, where $\zeta > 0$ denotes the regularity of the sector $V$ as
above.

\begin{assumption}\label{ass:shifts}
There exist $p \in (2,\infty)$ as well as a separable Banach space $X_0$ with
dense and continuous inclusions $\CC^\infty(\R^d) \subset X_0 \subset H_0$
such that
\begin{claim}
\item There is a continuous action $\tau$ of $\V = L^p([0,1],X_0)$ on $\MM$
such that, for all smooth functions $h, \omega$ and every $g \in \RR$, one
has the identity $\tau(h,M_g\LL(\noise)) = M_g \LL(\noise + h)$. 
\item One has $\V \subset \CC^{\gamma-2}$, extending functions on $[0,1]$ to vanish outside. 
\item The space $X_0$ admits $\CC^{\zeta}$ as a multiplier and, for every $\rho \in \CC_0^\infty(\R^d)$
integrating to $1$, the operators $h \mapsto \rho^\eps \star h$ are uniformly bounded
as $\eps \to 0$ and converge pointwise to the identity.
\end{claim}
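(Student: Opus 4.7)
The plan is to exhibit an explicit pair $(X_0, p)$ and action $\tau$ verifying all three claims. For $X_0$ I would take a Besov space $B^\alpha_{r,r}$ of $L$-periodic distributions on $\R^d$, with $\alpha \in (-\zeta, 0)$ small and $r \in (2,\infty)$, and then pick $p$ sufficiently large. Claim~3 then follows from classical Besov paraproduct estimates: the condition $\alpha + \zeta > 0$ makes $X_0$ a Banach module over $\CC^\zeta$, and convolution with a mollifier $\rho^\eps$ is uniformly bounded on $B^\alpha_{r,r}$ with pointwise convergence on the dense subset $\CC^\infty$. Claim~2 reduces to the parabolic Sobolev embedding $L^p([0,1], B^\alpha_{r,r}) \hookrightarrow \CC^{\gamma - 2}$, which holds once $p$ is large enough relative to $\gamma - 2 - \alpha$; this is possible because the subcriticality condition in \cite{reg} forces $\gamma - 2$ to lie strictly below the regularity of the driving noise.

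The substantive content is Claim~1. I would define $\tau$ first on the dense sub-manifold of smooth lifts by the required formula
\begin{equ}
\tau(h, M_g\LL(\noise)) \eqdef M_g\LL(\noise + h)\;,\qquad \noise \in \CC^\infty(\R^{d+1})\;,\ g \in \RR\;,
\end{equ}
and then extend to all of $\CM$ by continuity. The key input is a local Lipschitz bound of the form
\begin{equ}
\$M_g\LL(\noise + h) - M_g\LL(\noise')\$ \le C\bigl(\$M_g\LL(\noise) - M_g\LL(\noise')\$ + \|h\|_\V\bigr)\;,
\end{equ}
uniform in $g$ on compact subsets of $\RR$ and over bounded subsets of $(\noise, \noise', h)$. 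This would be established symbol by symbol: for each element $T \in \CT$, the map $\noise \mapsto \Pi_z^{M_g\LL(\noise)} T$ is a renormalized multilinear polynomial in $\noise$, so expanding $\noise + h$ produces finitely many ``mixed'' terms in which some leaves carry $h$ in place of $\noise$. Each mixed term can be bounded in the $\$\cdot\$$-seminorm by the $\CC^\zeta$-multiplier property of $X_0$ (Claim~3) together with the kernel bounds \cite[Ass.~5.1\,\&\,5.4]{reg}, reducing the estimate to a bounded bilinear form on $X_0 \times \CC^\zeta$ integrated against admissible kernels. Continuity of $\tau$ in its two arguments then follows, and the action property $\tau(h_1 + h_2, \xi) = \tau(h_1, \tau(h_2, \xi))$ propagates from smooth models to all of $\CM$ by density.

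The main obstacle is obtaining the Lipschitz estimate uniformly in the renormalization element $g$. For a sequence $M_{g_n}\LL(\noise_n)$ converging to a given $\PPi \in \CM$, the constants $g_n$ typically diverge as the regularization is removed, so one cannot simply appeal to continuity of $g \mapsto M_g$. The way around this is to exploit that a shift $h \in \V$ is genuinely smoother than the driving noise: by Claim~2 any such $h$ lies in $\CC^{\gamma - 2}$, whereas the driving noise has space-time regularity no better than $\gamma - 2$, so a tree whose leaves have been replaced by $h$ sits strictly above the threshold at which renormalization is triggered. This ensures that the same renormalization $g_n$ simultaneously handles $M_{g_n}\LL(\noise_n + h)$ and $M_{g_n}\LL(\noise_n)$, and the Lipschitz estimate survives the limit $n \to \infty$. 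Verifying this commutation of the shift with the renormalization is the technical core of the proof; it must be carried out tree-by-tree, but is systematic and follows from counting homogeneities in $\CT$.
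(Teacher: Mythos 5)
There is a genuine gap, and it sits exactly where you locate the ``technical core''. Your extension-by-density argument hinges on a Lipschitz bound of the form $\$M_g\LL(\noise+h)-M_g\LL(\noise')\$\le C(\$M_g\LL(\noise)-M_g\LL(\noise')\$+\|h\|_\V)$ ``uniform over bounded subsets of $(\noise,\noise',h)$'', but along a sequence $M_{g_n}\LL(\noise_n)\to\PPi$ in $\CM$ the smooth representatives $\noise_n$ are unbounded in every norm in which a symbol-by-symbol multilinear expansion could be estimated, and the $g_n$ diverge; the only quantities under control are the model seminorms. So the estimate that actually has to be proved is a bound on each ``mixed'' tree purely in terms of $\$\PPi\$$ and $\|h\|$, uniformly in $g$ — and this is precisely what your last paragraph asserts rather than proves. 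The paper's verification (Section~\ref{sec:Phi43}) supplies exactly this missing content by a two-step construction: it adjoins a symbol $\hat\Xi$ of degree $-\kappa$ to form $(\hat\CT,\hat\CG)$, notes that every new symbol except $\hat\Xi$ has strictly positive degree (your ``above the renormalisation threshold'' observation is this remark, but on its own it yields nothing quantitative), and invokes the extension theorem \cite[Prop.~3.31, Thm~5.14]{reg} to get a unique locally Lipschitz map $\CY(h,\PPi)$ with $\hat\Pi_z\hat\Xi=h$ and bounds depending only on $\$\PPi\$$ and $\|h\|$; then it constructs, nontrivially, a substitution map $\CZ\colon\hat\MM\to\MM$ realising $\Xi\mapsto\Xi+\hat\Xi$ at the level of models (the recursive $f_z^\tau$ and $\Lambda_{z\bar z}$, plus the admissibility check), sets $\tau=\CZ\circ\CY$, and verifies commutation with $M_g$ either on the finitely many negative-degree symbols or via the renormalisation-group element of \cite{BHZ}. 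Your sketch collapses these two steps into one density argument whose key inequality, if written out, essentially forces you to redo this construction; as it stands the proposal does not close.

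A secondary but concrete problem is your choice of $X_0$. The preamble of the assumption requires $\CC^\infty(\R^d)\subset X_0\subset H_0$, and for the examples at hand $H_0$ is an $L^2$ space; a Besov space $B^\alpha_{r,r}$ with $\alpha<0$ is not contained in $L^2$, so elements of $\V$ would fail to be Cameron--Martin directions and the Girsanov arguments of Section~3 (in particular Proposition~\ref{prop:shift}) would no longer apply. The paper instead takes $X_0=\CC$, the continuous periodic functions: this trivially admits $\CC^\zeta$ ($\zeta>0$) as a multiplier, satisfies $\CC^\zeta\subset X_0\subset L^2\cap\CC^\eta$ as needed for Assumption~\ref{ass:nondegenerate}, and gives $L^p([0,1],\CC)\subset\CC^{-\kappa}\subset\CC^{\gamma-2}$ for $p$ large, which is all that Claim~2 requires.
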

Furthermore, there exists a $\CC^2$ map $G \colon \R \to \R$
such that, for any $h \in \V$ supported
in $(0,1)\times \R^d$ and any model 
$\PPi \in \MM$, the solution
$\PPhi^h$ to \eqref{e:SPDE} with model $\PPi^h = \tau(h,\PPi)$ is related to the solution
$\hPPhi^h$ to 
\begin{equ}[e:SPDEshift]
\hPPhi^h = \CP \one_+ \bigl(F(\hPPhi^h) + N \bigr) + P \star \bigl(G(\scal{\one,\hPPhi^h})h\bigr) + P \Phi_0\;,
\end{equ}
with model $\PPi$ by $\CR^h \PPhi^h = \CR \hPPhi^h$. 
Here, $\CR^h$ denotes the reconstruction operator for the model $\PPi^h$,
$\CR$ denotes the reconstruction operator for $\PPi$, and $\scal{\one,\cdot} \colon \hat V_{<\gamma} \to \R$ is 
the canonical projection  onto the component $\one$.
Finally, we assume that either $G$ is constant or $\hat V = V$.
\end{assumption}

\begin{remark}\label{rem:goodenough}
In \eqref{e:SPDEshift}, the term $G(\scal{\one,\hPPhi^h})h$ is interpreted as an element of 
$\V$, thanks to the assumption on $G$ and the fact that $\CC^{\zeta}$ is a multiplier on 
$X_0$. (The fact that $\scal{\one,\hPPhi^h}$ belongs to $\CC^\zeta$ follows from \cite[Prop.~3.28]{reg}.)
Since we assume that $h$ is supported away from $0$, the singularity at the origin
appearing in $G(\scal{\one,\hPPhi^h})$ plays no role. It then follows from the second assumption on
$X_0$ that one has $P \star \bigl(G(\scal{\one,\hPPhi^h})h\bigr) \in \CC^\gamma$, which 
we interpret as an element of $\CD^\gamma$
via its canonical lift to the Taylor polynomials $\bar \CT$. 
In particular, this shows that \eqref{e:SPDEshift} again satisfies the assumptions
of \cite[Sec.~7]{reg}, so that it does indeed admit unique local solutions. The identity
\eqref{e:SPDEshift} should then of course be interpreted as holding before the 
possible explosion time, and that this explosion time is the same for both equations.
\end{remark}

%Finally, we make an assumption of a probabilistic nature.
%Writing $(\Omega,\P,\CF)$ for a filtered Gaussian probability space as in Section~\ref{sec:setup},
%we assume the following.
%
%
%\begin{assumption}\label{ass:noise}
%There is a measurable map $\mod \colon \Omega \to \MM$ such that, for 
%any $s < t$ and any $\Phi_s \in \CC^\eta$, the map $\omega \mapsto \Phi_{s,t}(\Phi_s,\mod(\omega))$
%is $\CF_{s,t}$-measurable.
%Furthermore, there is a continuous linear map 
%$\iota \colon \CC^\zeta(\R^{d}) \to H_0$ such that, for every $h \in \CC([0,1],\CC^\zeta(\R^{d}))$, 
%one has the $\P$-almost sure identity
%$\mod(\omega + \iota h) = \tau_h(\mod(\omega))$.
%\end{assumption}
%
%In particular, when combined with \eqref{e:consistency} and the fact that the $\sigma$-algebras
%$\CF_{s,r}$ and $\CF_{r,t}$ are independent for any $s < r < t$, this implies that
%for any $\Phi_0 \in \CC^\eta$, the process $t \mapsto \Phi_{0,t}(\Phi_0, \mod(\omega))$
%is Markov.

The framework considered here covers all current examples (on bounded domains
with periodic boundary conditions) of parabolic  
singular stochastic PDEs for which the theory of regularity structures
applies.
In the cases of the KPZ equation and the dynamical $\Phi^4_d$
equation with $d \in \{2,3\}$ as treated in \cite{KPZ,reg} one can take 
$N = \Xi$, where $\Xi \in \CT$ is the basis vector representing the driving noise. 
In the case of the one-dimensional stochastic heat equation with multiplicative 
noise covered in \cite{Etienne}, one can simply take $N = 0$.

The following proposition shows that this framework is compatible with our abstract result.

\begin{proposition}\label{prop:compatible}
In the setting of this section, Assumptions~\ref{ass:canonical}--\ref{ass:shifts} guarantee
that Assumptions~\ref{ass:cont}--\ref{ass:MallDer} 
of Section~\ref{sec:setup} are satisfied.
\end{proposition}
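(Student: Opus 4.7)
The plan is to verify Assumptions~\ref{ass:cont}--\ref{ass:MallDer} in turn, leveraging the local well-posedness and stability theory of \cite[Sec.~7]{reg} together with Assumptions~\ref{ass:canonical}--\ref{ass:shifts}. For Assumption~\ref{ass:cont}, I would unroll the proof of \cite[Thm~7.8]{reg}: the modelled distribution $\PPhi$ solving \eqref{e:SPDE} is a fixed point of a contraction on a ball in $\CD^{\gamma,\eta}$ whose data depend jointly continuously on $(s,t,u,\PPi)$ via the initial condition, the strongly locally Lipschitz composition by $F$ (Assumption~\ref{ass:canonical}), the operator $\CP$, and the multiplication and reconstruction rules of the structure. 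Openness of $\Phi^{-1}(U)$ and joint continuity of $\Phi_{s,t}$ on this set follow from lower semi-continuity of the maximal existence time together with the parameter-dependent contraction mapping theorem. Fr\'echet differentiability in $u$ is then obtained from Assumption~\ref{ass:diff} by differentiating the fixed-point equation, the derivative $D\Phi_t(u,\xi)v$ being the reconstruction of the unique solution to the linearised equation in $\CD^{\gamma,\eta}$.

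For Assumption~\ref{ass:r}, I would take
\[
r_t(u,\xi) = \$\xi\$_{[0,t]} + \|u\|_{\CC^\eta} + \|\PPhi\|_{\CD^{\gamma,\eta}([0,t])}\;,
\]
whenever the maximal existence time of $\PPhi$ exceeds $t$, and $r_t(u,\xi)=\infty$ otherwise. Monotonicity in $t$ is immediate; continuity in $t$ before blow-up, lower semi-continuity in $(u,\xi)$, and the blow-up alternative are all inherited from the stability statements of \cite[Sec.~7]{reg}; local Lipschitz continuity on $\CN_t$ follows by the implicit function theorem applied to the fixed-point equation. The $\CF_t$-measurability of $\omega \mapsto r_t(u,\mod(\omega))$ reduces to the adaptedness hypothesis on $\mod$ in Assumption~\ref{ass:model} combined with the non-anticipativity of $P$, which ensures that $\PPhi\restr[0,t]$ only depends on the model tested against functions supported in $(-\infty,t]\times\R^d$.

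Assumption~\ref{ass:action} is largely already contained in Assumption~\ref{ass:shifts}, except for the Cameron--Martin intertwining \eqref{e:definition}. I would recover this by mollifying $\omega$ with $\rho^\eps$, applying the smooth intertwining $M_{g_\eps}\LL(\rho^\eps\star(\omega+h)) = \tau(\rho^\eps\star h, M_{g_\eps}\LL(\rho^\eps\star\omega))$ provided by the first clause of Assumption~\ref{ass:shifts}, and then passing to the limit using the continuity of $\tau$, the pointwise convergence of the mollifiers on $X_0$, and the probabilistic convergence of $M_{g_\eps}\LL(\rho^\eps\star\cdot)$ to $\mod$ from Assumption~\ref{ass:model} applied to both $\omega$ and $\omega+h$. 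The localisation \eqref{e:dependency} is then a consequence of the non-anticipative nature of $P$: an $h$ vanishing on $[s,t]$ alters $\PPi$ only outside $[s,t]$, so by the Duhamel structure underlying \eqref{e:SPDE} the flow $\Phi_{s,t}$ is left unchanged.

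Finally, Assumption~\ref{ass:MallDer} follows from \eqref{e:SPDEshift}: modulo reconstruction, $\Phi_t(u,\tau(h,\xi))$ coincides with $\CR\hPPhi^h$, and $\hPPhi^h$ solves a fixed-point equation in $\CD^{\gamma,\eta}$ whose additional source $P\star(G(\scal{\one,\hPPhi^h})h)$ depends smoothly on $h \in \V$ by Remark~\ref{rem:goodenough} and the hypothesis $G\in\CC^2$, so that differentiability in $h$ at $h=0$ follows by differentiating the fixed point and using Assumption~\ref{ass:diff} on $F$. The main obstacle in the whole program is the intertwining step: one has to control the convergence of the renormalised smooth models under a Cameron--Martin shift of the (generally rough) driving noise, and verify that the renormalisation constants $g_\eps$ furnished by Assumption~\ref{ass:model} are shift-compatible in the limit. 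Once this is in hand, the remaining verifications amount to systematic bookkeeping on top of the fixed-point theory of \cite{reg}.
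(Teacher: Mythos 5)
Your overall plan follows the same architecture as the paper's proof (fixed-point theory of \cite[Thm~7.8]{reg} plus the implicit function theorem for the two differentiability statements, $r_t$ built from the $\CD^{\gamma,\eta}$-norm of the solution, mollification for the intertwining), but two steps are not actually closed. The most important one is precisely the step you flag as ``the main obstacle'': the identity \eqref{e:definition} is left open, with a worry about ``shift-compatibility'' of the renormalisation constants $g_\eps$. The paper closes this without ever needing convergence of $M_{g_\eps}\LL(\rho^\eps\star(\omega+h))$ or any compatibility of the $g_\eps$ with the shift. Since $\mod(\omega+h)$ is \emph{defined} through the Cameron--Martin density, one applies Girsanov at the mollified level, where the intertwining is exact: for bounded continuous $X$,
$\E X\bigl(\tau(h_\eps,M_{g_\eps}\LL(\omega_\eps))\bigr) = \E X\bigl(M_{g_\eps}\LL(\omega_\eps)\bigr)\exp\bigl(h_\eps^*(\omega_\eps)-\|h_\eps\|_\CH^2/2\bigr)$ with $h_\eps=h\star\rho^\eps$. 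Then $h_\eps\to h$ in $\V$ (third clause of Assumption~\ref{ass:shifts}), and $h_\eps^*(\omega_\eps)=(h\star\rho^\eps\star\rho^\eps)^*(\omega)\to h^*(\omega)$ in probability, so taking limits on both sides using Assumption~\ref{ass:model} and the joint continuity of $\tau$ gives \eqref{e:definition} directly. Without this (or some substitute), the key non-trivial claim of the proposition is unproved.

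Second, your derivation of \eqref{e:dependency} rests on the claim that an $h$ vanishing on $[s,t]$ ``alters $\PPi$ only outside $[s,t]$''. That is false: the entries of the shifted model at a point $z$ with time in $[s,t]$ involve kernel convolutions against $h$ over all times up to that of $z$, so they feel the values of $h$ on $[0,s)$, where $h$ need not vanish. The paper instead deduces \eqref{e:dependency} from Assumption~\ref{ass:repr}: for models of the form $M_g\LL(\noise)$ the reconstructed solution solves a classical PDE whose nonlinearities $F_g^{(1)},F_g^{(2)}$ are local, so the flow on $[s,t]$ only sees the noise on $[s,t]$, and the general case follows by continuity. Two smaller points: your $r_t$ carries the superfluous terms $\$\xi\$_{[0,t]}+\|u\|_{\CC^\eta}$, and a model seminorm restricted to $[0,t]$ involves test functions whose support extends past time $t$, so its $\CF_t$-measurability is not clear; the paper's choice $r_t=\$\PPhi\$_{\gamma,\eta;[0,t]}$ avoids this via the non-anticipativity of $P$ and Assumption~\ref{ass:model}. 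Also, continuity of $t\mapsto\bigl(\CR\PPhi\bigr)(t,\cdot)$ in $\CC^\eta$ is exactly where $\zeta>0$ and the nice-model condition $P\star\Pi_z N\in\CC(\R,\CC^\eta)$ are needed (recall $\eta$ may be negative); your sketch does not use either.
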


\begin{proof}
Recall that we have set $U = \CC^\eta$ and that, for $t$, $\Phi_0$ and $\PPi$ such that
the explosion time of \eqref{e:SPDE} is greater than $t$, one has
$\Phi_t(\Phi_0, \PPi) = \bigl(\CR \PPhi\bigr)(t,\cdot)$, where $\PPhi$ is the solution 
in $\CD^{\gamma,\eta}$ to \eqref{e:SPDE} and $\CR$ is the reconstruction operator associated
to the nice model $\PPi$. The only parts of Assumption~\ref{ass:cont} 
which do not follow immediately from the well-posedness results of \cite[Sec.~7]{reg}
are the continuity in time, since one may have $\eta < 0$, and the Fr\'echet differentiability with
respect to the initial condition. 
Time continuity follows from the fact that we assumed $\zeta > 0$, combined
with the condition $P\star \Pi_z N \in \CC(\R,\CC^\eta)$, which guarantee that 
$\CR \PPhi \in \CC(\R,\CC^\eta)$ for every $\PPhi \in \CD^{\gamma,\eta}(V)$.
To show Fr\'echet differentiability, we note that as a consequence of 
Assumption~\ref{ass:diff}, the map
\begin{equ}
(\tau,J) \mapsto \bigl(F(\tau), \bigl(D F(\tau)\bigr)J \bigr)\;,
\end{equ}
from $V_{<\gamma} \oplus V_{<\gamma}$ into $\bar V_{<\bar \gamma} \oplus \bar V_{<\bar \gamma}$
satisfies the assumptions of \cite[Thm~7.8]{reg}. As a consequence, we have unique
maximal solutions to the fixed point problem
\begin{equs}[e:DerSPDE]
\PPhi &= \CP \one_+ \bigl(F(\PPhi) + N\bigr) + P \Phi_0\;,\\
J &= \CP \one_+ \bigl(DF(\PPhi)J\bigr) + P J_0\;.
\end{equs}
As a consequence of the implicit function theorem and the fact that 
$\Phi \mapsto \CP \one_+ F(\PPhi)$ is a contraction on $\CD^{\gamma,\eta}([0,T],V)$
for small enough $T$, it follows that
the solution $\Phi_t$ is indeed differentiable in the initial condition and that
its derivative in the direction $J_0 \in \CC^\eta$ is given by $J$.
A standard patching argument shows that this is true not only for small $t$,
but all the way up to the explosion time of $\PPhi$.

We now turn to Assumption~\ref{ass:r}. If
$\Phi_t(u,\xi) = \death$, then we set $r_t(u,\xi) = +\infty$, otherwise,
we set
\begin{equ}
r_t(u,\xi) = \$\PPhi\$_{\gamma,\eta;[0,t]}\;,
\end{equ}
where $\PPhi$ is the solution to \eqref{e:SPDE} with initial condition $u$ and
underlying model $\xi$, and the norm $\$\cdot\$_{\gamma,\eta;[0,t]}$ is as
in \cite[Def.~6.2]{reg} with $\K = [0,t]\times \R^d$. (The fact that this set is
not compact is irrelevant since we only consider periodic functions.)
The $\CF_t$-measurability of $r_t$ is then an immediate consequence of the 
non-anticipativity of $P$, combined with the second part of Assumption~\ref{ass:model},
which guarantee that the random variable $\PPhi(s,x)$ is $\CF_s$-measurable
for all $s \in [0,1]$ and all $x \in \R^d$. The map is also increasing by 
definition and it is continuous in $t$, except at the explosion time when it has to
diverge to $+\infty$ as a consequence of the fact that the $\CC^\eta$-norm of the solution
is dominated by $r_t$ and has to blow up at the explosion time. The third required property of
$r$ is again satisfied by definition, while the last property follows from the fact that 
the fixed point map defining our solutions is locally Lipschitz continuous with values
in $\CD^{\gamma,\eta}$ as a function of both the initial condition and the underlying model.

To check that Assumption~\ref{ass:action} is verified, we exploit the identity
$\tau(h,M_g\LL(\noise)) = M_g \LL(\noise + h)$ given by Assumption~\ref{ass:shifts}.
Fix a mollifier $\rho$, then we have for every $\eps > 0$ and every $\omega \in \Omega$ 
the identity
\begin{equ}
\tau(h_\eps,M_{g_\eps}\LL(\omega_\eps)) = M_{g_\eps} \LL(\omega_\eps + h_\eps)\;,
\end{equ}
where we set $h_\eps = h\star \rho^\eps$ and similarly for $\omega_\eps$. For every 
bounded continuous $X\colon \CM \to \R$, we therefore have, by Girsanov's theorem,
\begin{equ}
\E X\bigl(\tau(h_\eps,M_{g_\eps}\LL(\omega_\eps))\bigr) = \E X\bigl(M_{g_\eps} \LL(\omega_\eps)\bigr) \exp(h_\eps^*(\omega_\eps) - \|h_\eps\|_\CH^2/2)\;.
\end{equ}
As a consequence of the third property of $X_0$, one has $h_\eps \to h$ in $\V$
and therefore also in $\CH$ since $X_0 \subset H_0$.
Furthermore $h_\eps^*(\omega_\eps) = \tilde h_\eps^*(\omega)$ with 
$\tilde h_\eps = h\star \rho^\eps\star \rho^\eps$, which also converges to 
$h$ in $\CH$, so that $h_\eps^*(\omega_\eps) \to h^*(\omega)$ in probability.
Taking limits on both sides and exploiting the fact that 
$M_{g_\eps}\LL(\omega_\eps) \to \mod$ in probability in $\CM$ and $\tau$ is jointly continuous,
the claim \eqref{e:definition} follows. The second claim of that assumption
immediately follows from Assumption~\ref{ass:repr}, in particular from the locality of the maps
$F_g^{(i)}$.

It remains to show that Assumption~\ref{ass:MallDer} holds, namely that the solution 
with model $\tau(h,\PPi)$ is Fr\'echet differentiable in the direction of $h \in \V$
at $h=0$. Thanks to Assumption~\ref{ass:shifts} however, we can fix the model $\PPi$ 
and it suffices to show that the solution $\hPPhi^h$ to \eqref{e:SPDEshift} is differentiable
at $h=0$ for every $t \in (0,1]$ and every $(\Phi_0,\PPi) \in \CN_t$. This however follows 
again immediately from the implicit functions theorem. 
\end{proof}

Of course we cannot expect that Assumption~\ref{ass:shiftmap} is satisfied without further condition
since we could for example encode a situation where $F_g^{(2)} = 0$, so that our solutions
are not random at all. The next subsection introduces a condition under which this last
assumption is also satisfied, thus yielding the strong Feller property for the corresponding
Markov process.

\subsection{A non-degeneracy assumption}

In order to have a chance for Assumption~\ref{ass:shiftmap} to hold, we need to know
that the Cameron-Martin space $\CH$ is sufficiently large to contain enough shifts
to be able to compensate for any shift in the initial condition. One possible
assumption guaranteeing that this is indeed the case is the following.

\begin{assumption}\label{ass:nondegenerate}
One has $\CC^\zeta \subset X_0 \subset  \CC^\eta$, and
the map $G$ appearing in Assumption~\ref{ass:shifts}
satisfies $G(u) > 0$ for every $u \in \hat V_{<\gamma}$.
\end{assumption}

This is then the final ingredient in the proof of the strong Feller property.

\begin{theorem}\label{theo:mainReg}
In the setting of this section, Assumptions~\ref{ass:canonical}--\ref{ass:nondegenerate} guarantee
that Assumptions~\ref{ass:cont}--\ref{ass:shiftmap} 
of Section~\ref{sec:setup} are satisfied. In particular, the Markov operator $\PP$
satisfies the strong Feller property.
\end{theorem}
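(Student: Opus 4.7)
The plan is to reduce the theorem to Assumption~\ref{ass:shiftmap}, since Proposition~\ref{prop:compatible} already gives Assumptions~\ref{ass:cont}--\ref{ass:MallDer}; once Assumption~\ref{ass:shiftmap} is established, the conclusion follows by Theorem~\ref{theo:main}. The key task is therefore to exhibit adapted shift maps $A_t^{(s)}$ satisfying \eqref{e:propertyShift}. Linearising \eqref{e:SPDEshift} at $h = 0$ shows that for any $\dot h \in \V$, the Fr\'echet derivative $\CD\Phi_t(u,\xi)\dot h$ is given by $(\CR K)(t,\cdot)$ where $K$ solves
\begin{equ}
K = \CP \one_+ \bigl(DF(\PPhi)\,K\bigr) + P\star \bigl(G(\scal{\one,\PPhi})\,\dot h\bigr)\;,
\end{equ}
while Proposition~\ref{prop:compatible} already identifies $D\Phi_t(u,\xi) v = (\CR J)(t,\cdot)$ where $J$ solves the linearisation \eqref{e:DerSPDE} with $J_0 = v$. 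So we need $\dot h$ driving the former equation to the terminal state $-J(t,\cdot)$ at time $t$.

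My approach is the following explicit \emph{forward} construction. For $v \in U$ with associated derivative process $J = J^v$, set
\begin{equ}
\bigl(A_t^{(s)}(u,\xi)v\bigr)(r) = -\frac{1}{t}\,\frac{\scal{\one,J(r,\cdot)}\text{-extended}}{G(\scal{\one,\PPhi(r,\cdot)})}\,J(r,\cdot)\,\one_{[0,s]}(r)\;,
\end{equ}
or more cleanly $\dot h(r) = -\frac{1}{t\,G(\scal{\one,\PPhi(r)})}\,J(r,\cdot)$ for $r\in[0,s]$. The point is that if one plugs $\dot h$ above into the linearised shifted equation, an elementary calculation shows $K(r) = -\frac{r}{t}\,J(r)$ solves it (the $-\frac{1}{t}$ coming from $\frac{d}{dr}(-\frac{r}{t})$ cancels precisely against the $G(\scal{\one,\PPhi})\dot h$ source term), so in particular $K(t) = -J(t)$, which is \eqref{e:propertyShift}. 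The compatibility $A_t^{(r)}v\restr[0,s] = A_t^{(s)}v$ is built into the formula, and the $\CF_s$-measurability of $\omega \mapsto A_t^{(s)}(u,\mod(\omega))v$ follows from the adaptedness part of Assumption~\ref{ass:model} together with adaptedness of the solutions of \eqref{e:SPDE} and of its linearisation.

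Two technical points remain and constitute the main obstacle. First, one must check that $A_t^{(s)}(u,\xi)v$ actually lies in $\V_s = L^p([0,s],X_0)$: for $r>0$ parabolic regularisation applied to \eqref{e:DerSPDE} yields $J(r,\cdot) \in \CC^\zeta \subset X_0$ by Assumption~\ref{ass:nondegenerate}, while near $r=0$ one picks up a factor $r^{-(\zeta-\eta)/(2q)}\|v\|_{\CC^\eta}$, which is integrable in $L^p$ thanks to $\zeta+2q > \eta$ and the freedom to choose $p$ large enough in Assumption~\ref{ass:shifts} (but not too large — this is the only delicate interplay between the exponents). The positivity of $G$ from Assumption~\ref{ass:nondegenerate} ensures the denominator is bounded away from zero on sets where $\scal{\one,\PPhi}$ is controlled, in particular on $\{r_s(u,\xi) \le R\}$, so that $A_t^{(s)}$ is bounded there, while local Lipschitz continuity is inherited from the local Lipschitz dependence of $\PPhi$ and $J$ on the initial condition and model (via the implicit function theorem applied in \cite[Thm~7.8]{reg}) combined with the $C^2$-regularity of $G$. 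Having verified Assumption~\ref{ass:shiftmap}, the strong Feller property is immediate from Theorem~\ref{theo:main}.
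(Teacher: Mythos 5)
Your overall strategy is the paper's: reduce everything to Assumption~\ref{ass:shiftmap}, identify $\CD\Phi_t(u,\xi)\dot h$ through the linearisation of \eqref{e:SPDEshift} (equivalently the variation of constants formula \eqref{e:MallDer}), build $A_t^{(s)}$ out of $G^{-1}(\scal{\one,\PPhi})$ and the Jacobian $J_r$, and conclude with Theorem~\ref{theo:main}. The algebraic cancellation you check (that $K(r)=-\tfrac rt J(r)$ solves the shifted linearisation, so $K(t)=-J(t)$) is fine and is just \eqref{e:MallDer} with $h(r)=-\tfrac1{t}G^{-1}(\scal{\one,\PPhi(r)})J_r v$ together with $J_{r,t}J_r=J_t$.

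The genuine gap is the uniform weight $1/t$ on all of $[0,t]$: with it, your candidate shift involves $J_r v$ for $r$ arbitrarily close to $0$, and you must show it lies in $\V_s=L^p([0,s],X_0)$. Nothing in Assumptions~\ref{ass:canonical}--\ref{ass:nondegenerate} gives a quantitative rate for the blow-up of $\|J_r\|_{\CC^\eta\to\CC^\zeta}$ as $r\to0$, and even granting the natural Schauder rate $r^{-(\zeta-\eta)/(2q)}$ that you invoke, your integrability claim goes the wrong way: $r^{-(\zeta-\eta)/(2q)}\in L^p([0,t])$ requires $p<2q/(\zeta-\eta)$, i.e.\ $p$ \emph{small}, whereas the framework demands $p\in(2,\infty)$ and the embedding $\V\subset\CC^{\gamma-2}$ of Assumption~\ref{ass:shifts} pushes $p$ large. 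Concretely, for $\Phi^4_3$ one has $\zeta-\eta=\tfrac98-3\kappa$ and $2q=2$, so integrability would force $p<2$, which is inadmissible; the "freedom to choose $p$" you appeal to does not exist. The paper sidesteps the issue entirely by inserting a cutoff in time: in \eqref{e:defAt} the weight is $t^{-1}\chi(r/t)$ with $\chi$ vanishing on $[0,1/4]$ and of integral one, so the shift is supported in $r\ge t/4$, where $J_r$ maps $\CC^\eta$ boundedly into $\CC^\zeta\subset X_0$ (Assumption~\ref{ass:nondegenerate}); then $A_t^{(s)}v\in L^\infty([0,s],\CC^\zeta)\subset\V_s$, the identity \eqref{e:propertyShift} still holds because $\int_0^t t^{-1}\chi(r/t)\,dr=1$, and the boundedness on $\{r_s\le R\}$ and local Lipschitz continuity become straightforward. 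Replacing your constant-in-$r$ weight by such a $\chi$ (and keeping the rest of your argument) repairs the proof; as written, the constructed $A_t^{(s)}$ need not take values in $\V_s$, so Assumption~\ref{ass:shiftmap} is not verified.
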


\begin{proof}
We first note that, similarly to the proof of the first part of Proposition~\ref{prop:compatible},
our assumptions guarantee
that, for $(\Phi_0, \PPi) \in \CN_t$ and $0 \le s \le t$, the 
map $\Phi_{s,t}$ is Fr\'echet differentiable with respect to its first argument at the point
$\Phi_s(\Phi_0,\PPi)$. Denote its derivative in the direction $u$ by $J_{s,t}u$.
Since the second component of the
solution to the fixed point problem \eqref{e:DerSPDE} belongs to $\CD^{\gamma,\eta}(V)$
with $V \subset \bar \CT \oplus \CT_{\ge \zeta}$, it follows that for $t > s$, 
$J_{s,t}$ is a bounded linear operator from $\CC^\eta$ to $\CC^\zeta$.
(Since it is the identity for $s=t$, its norm blows up as $t \to s$ and one
can quantify this blow-up, but its details are not important to us.)

As a consequence of the variation of constants formula given in Corollary~\ref{cor:varConst}
of the appendix, the derivative of $\Phi_t$ with respect to $h \in \V$ is given by
\begin{equ}[e:MallDer]
\CD \Phi_t(\Phi_0,\PPi) h = \int_0^t J_{s,t} G(\scal{\one,\PPhi(s,\cdot)})h(s)\,ds\;,
\end{equ}
as long as $(\Phi_0,\PPi) \in \CN_t$.
Here, $\PPhi$ denotes the solution to \eqref{e:SPDE}, while $\Phi_t = \bigl(\CR \PPhi\bigr)(t,\cdot)$.

This suggests the following definition. First, fix an arbitrary bounded function
$\chi \colon [0,1] \to \R$ such that $\chi(s) = 0$ for $s \le 1/4$ and
such that $\int_0^1 \chi(s)\,ds = 1$.
For $s \le t$ and using the shorthand $J_r = J_{0,r}$ we then define $A_t^{(s)} \colon \CN_s \to L(U,\V_s)$ by
\begin{equ}[e:defAt]
\bigl(A_t^{(s)}(\Phi_0,\PPi) v\bigr)(r)  = {1\over t} \chi(r/t) G^{-1}(\scal{\one,\PPhi(r,\cdot)}) J_r v\;,\qquad r \in [0,s]\;.
\end{equ}
Here, the Jacobian $J_s$ does of course depend on $(\Phi_0,\PPi)$, but we have suppressed this dependency
in our notations. We also used the notation $G^{-1}$ for the map $u \mapsto 1/G(u)$.
Note that the right hand side depends on $s$ only through the range of the variable $r$.
In particular, one has the identity
\begin{equ}
\bigl(A_t^{(s)}(\Phi_0,\PPi) v\bigr)(r) = \bigl(A_t^{(\bar s)}(\Phi_0,\PPi) v\bigr)(r)\;,
\end{equ}
provided that $(\Phi_0,\PPi) \in \CN_s \cap \CN_{\bar s}$. We claim that this definition 
satisfies all the properties of Assumption~\ref{ass:shiftmap}. 
Indeed, $\CF_s$-measurability follows again from the fact that our solutions $\PPhi(s,\cdot)$ are
$\CF_s$-measurable. The identity \eqref{e:propertyShift} follows immediately from inserting
\eqref{e:defAt} into \eqref{e:MallDer} and using the fact that $J_{r,t}J_r = J_t$ for
every $r \in [0,t]$ and every $(\Phi_0,\PPi) \in \CN_t$. 
The fact that $A_t^{(s)}(\Phi_0,\PPi) v$ belongs to $\V$ for every $v \in U$ 
follows from the fact that $J_r$ maps $U$ into $\CC^\zeta$ as already mentioned
above, combined with the fact that $\chi$ is supported away from $0$ and 
the map $x\mapsto G(\scal{\one,\PPhi(r,x)})$ belongs to $\CC^\zeta$
by Assumption~\ref{ass:shifts}, so that the same is true for $x\mapsto 1/G(\scal{\one,\PPhi(r,x)})$ 
by the positivity of $G$.
It follows that one actually has the stronger statement that 
$A_t^{(s)}(\Phi_0,\PPi) v \in L^\infty([0,s],\CC^\zeta)$.
\end{proof}

\section{Application to concrete examples}

We now show that the abstract results obtained in this article can be 
applied to the concrete examples mentioned in the introduction. We will treat the case
of the dynamical $\Phi^4_3$ model in some detail and then only focus on the differences
with the other examples.

\subsection{Dynamical $\Phi^4_3$ model}
\label{sec:Phi43}

Recall that the dynamical $\Phi^4_3$ model is the stochastic PDE formally given by
\begin{equ}[e:Phi43]
\d_t \Phi = \Delta \Phi - \Phi^3 + \xi\;,
\end{equ}
where $\xi$ denotes space-time white noise, see \cite{reg}. 
The space variable is assumed to take values in some bounded three-dimensional torus.
This is of course only a formal notation, one way of constructing $\Phi$ is as limits
of solutions to the equation
\begin{equ}
\d_t \Phi_\eps = \Delta \Phi_\eps + C_\eps \Phi_\eps - \Phi_\eps^3 + \xi_\eps\;,
\end{equ}
where $\xi_\eps = \xi\star \rho^\eps$ for some compactly supported
space-time mollifier $\rho$ and for a suitable choice of (diverging) constants $C_\eps$.
Using regularity structures, it was shown in \cite{reg} that such limits exist
and are ``unique'' in the sense that modulo a suitable choice of $C_\eps$ they are independent
of the choice of mollifier $\rho$.

\begin{theorem}
The Markov semigroup generated by the solutions to the dynamical $\Phi^4_3$
model on $\CC^{-5/8}$ satisfies the strong Feller property.
\end{theorem}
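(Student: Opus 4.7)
The plan is to verify Assumptions~\ref{ass:canonical}--\ref{ass:nondegenerate} of Section~\ref{sec:reg} for the $\Phi^4_3$ model and then invoke Theorem~\ref{theo:mainReg}. I take $(\CT,\CG)$ to be the regularity structure of~\cite{reg} built for this equation, with $\CL = \Delta$ (so $2q=2$), $N = \Xi$ the basis vector for the noise (of degree $-\tfrac{5}{2}-\kappa$ for small $\kappa>0$), and $F(\PPhi)=-\PPhi^3$. The sectors $V$ and $\bar V$ are the standard positive-regularity sectors of~\cite[Sec.~9]{reg}, so $\zeta>0$. The exponents $\gamma$, $\bar\gamma$, $\bar\eta$ are chosen as in~\cite{reg}, and I fix $\eta=-\tfrac{5}{8}$, which lies in the admissible window (in particular $3\eta>-2=-2q$, which is needed for $\PPhi\mapsto\PPhi^3$ to behave correctly between $\CD^{\gamma,\eta}$ and $\CD^{\bar\gamma,\bar\eta}$).

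Assumptions~\ref{ass:canonical}--\ref{ass:diff} then essentially reduce to facts already proven in~\cite{reg}. The canonical lift $\LL$, the BPHZ-type renormalization group $\RR$ together with its continuous action $M$ on $\CM$, and the strong local Lipschitz property of $F$ come from~\cite[Sec.~8--9]{reg}; Assumption~\ref{ass:model} is the probabilistic convergence of the renormalized lifts of mollified space-time white noise shown in~\cite[Sec.~10]{reg}, and its adaptedness statement is immediate since the heat kernel is non-anticipative and each Wick-renormalized tree is a polynomial in the noise restricted to times before its root. Assumption~\ref{ass:repr} is the renormalized-equation derivation of~\cite[Sec.~9.4]{reg}: for smooth $\noise$ and $\PPi=M_g\LL(\noise)$, the function $u=\CR\PPhi$ solves $\d_t u=\Delta u+C_g u-u^3+\noise$, so I take $F_g^{(1)}(u)=C_g u-u^3$ and $F_g^{(2)}\equiv 1$. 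Assumption~\ref{ass:diff} is automatic since $F$ is a smooth polynomial on a finite-dimensional sector.

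For Assumption~\ref{ass:shifts}, I set $H_0=L^2(\mathbf{T}^3)$ and take $X_0$ to be a separable ``little-H\"older'' subspace sitting strictly between $\CC^\zeta$ and $\CC^\alpha$ for some $\alpha\in(0,\zeta)$; it lies inside $L^2\cap\CC^\eta$, admits $\CC^\zeta$ as a multiplier, and is stable under mollification, so the three bulleted conditions hold. The action $\tau$ is defined on smooth models by $\tau(h,\LL(\noise))=\LL(\noise+h)$ and extended by continuity. Since the noise enters~\eqref{e:Phi43} additively, I take $G\equiv 1$; then~\eqref{e:SPDEshift} is just~\eqref{e:SPDE} driven by $\xi+h$, which verifies the equation-level compatibility. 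Assumption~\ref{ass:nondegenerate} follows trivially since $G=1>0$ and the stated inclusions on $X_0$ are built in.

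The main obstacle, and the step where some work beyond~\cite{reg} is genuinely needed, is the continuous extension of $\tau$ to all of $\CM$ together with the commutation relation $\tau(h,M_g\LL(\noise))=M_g\LL(\noise+h)$ uniformly for $h$ in bounded subsets of $\V$. On smooth data this identity holds because the renormalization constants for $\Phi^4_3$ depend only on the mollification parameter and not on the noise realisation, so $M_g$ commutes with any deterministic shift of $\noise$. To pass to the limit one must show, tree by tree in the Wick expansion of the canonical lift, that shifting a distributional noise by an element of $L^p([0,1],X_0)$ produces perturbations that converge in the topology of $\CM$; this ultimately relies on the fact that elements of $X_0$ are substantially smoother than samples of white noise, together with the established convergence of the unshifted renormalized models, and the remainder is bookkeeping.
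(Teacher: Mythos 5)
Your overall strategy coincides with the paper's: verify Assumptions~\ref{ass:canonical}--\ref{ass:nondegenerate} and invoke Theorem~\ref{theo:mainReg}, with $N=\Xi$, $F(\PPhi)=-\PPhi^3$, $\eta=-5/8$, $G\equiv 1$, and $X_0$ a space of continuous functions. (One imprecision: the solution sector of \cite{reg} contains $\CI(\Xi)$ and therefore has \emph{negative} regularity; the whole point of choosing $N=\Xi$ here is that $V$ must exclude $\CI(\Xi)$ so that $\zeta>0$, so "the standard positive-regularity sectors of \cite{reg}" is not quite right, though your choice $N=\Xi$ shows you are implicitly doing the correct thing.) The verifications of Assumptions~\ref{ass:canonical}--\ref{ass:diff} and \ref{ass:repr} do indeed reduce to \cite{reg} as you say.

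The genuine gap is Assumption~\ref{ass:shifts}, which is where essentially all of the paper's work lies, and your proposed route for it would not go through as described. You define $\tau$ on canonical lifts by $\tau(h,\LL(\noise))=\LL(\noise+h)$ and "extend by continuity", justified by a tree-by-tree limiting argument leaning on the convergence of the unshifted renormalised models. Two problems. First, the canonical lifts $\LL(\noise_\eps)$ themselves do not converge in $\CM$; only $M_{g_\eps}\LL(\noise_\eps)$ does, with $g_\eps$ diverging, so an extension by continuity requires quantitative continuity of $(h,\PPi)\mapsto\tau(h,\PPi)$ on bounded sets of models, uniformly along the renormalisation sequence --- which is exactly the statement to be proven, not an input. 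Second, a probabilistic, tree-by-tree argument would at best produce $\tau(h,\mod(\omega))$ for almost every $\omega$ and fixed $h$, whereas Assumption~\ref{ass:action} demands a deterministic, jointly continuous action on all of $\CM$ with $h\mapsto\tau(h,\xi)$ locally Lipschitz for \emph{every} model $\xi$; this pathwise Lipschitz structure is what drives the ODE construction of the compensating shift in Theorem~\ref{theo:shift} and cannot be recovered from an almost-sure limit. The paper's proof is entirely deterministic at this point: it enlarges the structure by a symbol $\hat \Xi$ of degree $-\kappa$, uses the extension theorem of \cite{reg} to build $\CY(h,\PPi)$ (possible because all new symbols except $\hat\Xi$ have positive degree), and then constructs $\CZ\colon\hat\MM\to\MM$ implementing the substitution $\Xi\mapsto\Xi+\hat\Xi$ at the level of models via the $\hat\PPi$-polynomials $f_z^\tau$ and the recentering maps $\Lambda_{z\bar z}$, checking the algebraic identities, the analytic bounds, admissibility (realising $K$ for $\CI$), and finally the commutation with $M_g$ either on the finitely many negative-degree symbols or through the renormalisation group. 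Your remark that $M_g$ commutes with deterministic shifts because the constants do not depend on the noise realisation is a reasonable heuristic for smooth data, but the substance --- that the shifted object is again an admissible model depending locally Lipschitz-continuously on $(h,\PPi)$ --- is the core of the proof and is not reducible to bookkeeping.
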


\begin{proof}
In view of Theorems~\ref{theo:mainReg} and \ref{theo:main}, it suffices to 
show that Assumptions~\ref{ass:canonical}--\ref{ass:nondegenerate}
are satisfied. Recall the construction \cite[Secs.~9--10]{reg} of the regularity 
structure associated to \eqref{e:Phi43}. 
Consider first the space $\bar \CT$ of polynomials in four indeterminates $X_0,\ldots,X_3$,
representing the usual Taylor polynomials. For any multiindex $k = (k_0,\ldots,k_3)$,
we write $X^k$ for the monomial $X_0^{k_0}\cdots X_3^{k_3}$. We also write
$\one$ instead of $X^0$.
 
We then build a collection $\CU$ of formal expressions as the
smallest collection containing all the $X^k$ and $\CI(\Xi)$, and such that 
\begin{equ}[e:induction]
\tau_1,\tau_2,\tau_3 \in \CU \quad\Rightarrow\quad \CI(\tau_1\tau_2\tau_3) \in \CU\;,
\end{equ}
where it is understood that $\CI(X^k) = 0$ for every multiindex $k$.
Here, it is understood that we make the necessary identifications so that the 
product appearing on the right hand side is commutative and associative, with unit $\one$.
We then set 
\begin{equ}[e:defCW]
\CW = \{\Xi\} \cup \{\tau_1\tau_2\tau_3\,:\, \tau_i \in \CU\}\;,
\end{equ}
and we define our space $\CT$ as the set of all linear combinations of elements in $\CW$.
(Note that since $\one \in \CU$, one does in particular have $\CU \subset \CW$.)
Each formal expression in $\CW$ is assigned a degree by setting
$\deg \Xi = -{5\over 2} - \kappa$ for some (sufficiently small) $\kappa>0$, $\deg \one = 0$,
$\deg X_0 = 2$, $\deg X_i = 1$ for $i \in \{1,\ldots,3\}$, and then extending this
to all of $\CW$ by postulating that
\begin{equ}
\deg (\tau \bar \tau) = \deg \tau + \deg \bar \tau\;,\qquad
\deg \CI(\tau) = \deg\tau + 2\;.
\end{equ}
Naturally, $\CT_\alpha$ is then the subspace of $\CT$ spanned by
those elements in $\CW$ that are of degree $\alpha$. 
With this definition, provided that $\kappa$ is small enough ($\kappa < {1\over 2}$ suffices,
but one needs $\kappa$ to be even smaller later),
each space $\CT_{<\gamma}$ for $\gamma \in \R$ is finite-dimensional.
Using the same graphical notations as in \cite{reg,LDP} (bullets for instances of $\Xi$, lines
for instances of $\CI$, so that for example $\<22> = \CI(\Xi)^2\CI(\CI(\Xi)^2)$), $\CT_{<0}$
is for example spanned by
\begin{equ}[e:negsymb]
\{\Xi, \<1>\ , \<2>\ , \<3>\ , \<32>\ , \<22>\ , \<31>\}\;.
\end{equ}

The structure group $\CG$ built in \cite[Def.~8.20]{reg}
then consists of all linear maps $\Gamma\colon \CT \to \CT$
such that
\begin{claim}
\item One has $\Gamma \Xi = \Xi$ and there exist $x_i \in \R$ such that $\Gamma X_i = X_i - x_i \one$.
\item For every $\tau \in \CT_\alpha$ one has $\Gamma \tau - \tau \in \CT_{<\alpha}$.
\item For every $\tau$, $\bar \tau$ in $\CW$ such that $\tau \bar \tau\in \CW$,
one has $\Gamma(\tau\bar \tau) = (\Gamma\tau)(\Gamma \bar \tau)$.
\item For every $\tau \in \CW$ such that $\CI(\tau) \in \CW$, one has
$\Gamma \CI(\tau) - \CI(\Gamma \tau) \in \bar \CT$, where $\CI$ is extended from
$\CW$ to $\CT$ by linearity.
\end{claim}
These properties are consistent (in the sense that the last two properties never 
refer to any formal expression not contained in $\CW$) 
and they do form a group.

We then define $V \subset \CT$ as the subspace spanned by the $X^k$, as well
as all expressions of the form $\CI(\tau)$ for some $\tau$, \textit{except} for
the expression $\CI(\Xi)$. We do however set $N = \Xi$, so that $\hat V$ 
contains all $\CI(\tau)$, and we simply set $\bar V = \CT$. 
If $\kappa$ is sufficiently small, one can verify that these choices of $V$ and $\bar V$
satisfy the assumptions of our setting with exponents
\begin{equ}
\zeta = {1\over 2} - 3\kappa\;,\qquad \bar \zeta = -{3\over 2}-3\kappa\;.
\end{equ}
Possible choices for $\gamma$, $\bar \gamma$, $\eta$ and $\bar \eta$ 
(again provided that $\kappa$ is small enough) 
are given by 
\begin{equ}
\gamma = {7\over 4}\;,\qquad \bar \gamma = {1\over 4}\;,\qquad
\eta = -{5\over 8}\;,\qquad \bar \eta = -{15\over 8}\;.
\end{equ}
(The point for the $\eta$ exponents is that $\eta \in (-2/3,-1/2)$ and
$\bar \eta = 3\eta$ which are imposed by the irregularity  of the noise and the
form of the nonlinearity.)
These choices plainly satisfy the various inequalities and properties
we imposed in Section~\ref{sec:reg} with $q = 1$.
The nonlinearity $F\colon V_{<\gamma} \to \bar V_{<\bar \gamma}$ is simply given by
$F(\PPhi) = -\CQ_{<\bar \gamma} \PPhi^3$, where $\CQ_{<\bar \gamma}$ is the projection
onto terms of degree less than $\bar \gamma$. The fact that $F$ satisfies the strong 
local Lipschitz property was shown in \cite[Lem.~9.7]{reg}.
In order to verify Assumption~\ref{ass:canonical}, it thus only remains to exhibit $\LL$ and
$\RR$. The canonical lift $\LL$ is the same as in \cite[Sec.~8.2]{reg}. The group $\RR$ in 
this case is simply given by $\R^2$ with addition, acting on the space of admissible 
models as in \cite[Sec.~9.2]{reg}. It acts on $\CT$ in a natural way
by essentially performing the substitutions
$\<2> \mapsto C_1 \one$ and $\<22> \mapsto C_2 \one$ in the sense that 
an arbitrary symbol is mapped to the sum over all possible ways of contracting
(possibly multiple) occurrences of $\<2>$ and $\<22>$, multiplied by the corresponding 
powers of $C_i$. For example
\begin{equ}[e:act1]
M_g \<32> = \<32> + C_1\<30> + 3C_1\<12> + 3C_1^2\,\<10> + 3C_2\<1>\;.
\end{equ}
This action can be lifted to an action on the space $\MM$
of admissible models via the construction of \cite[Sec.~8]{reg}. (See also
\cite[]{BHZ} for a more systematic justification of this.)

The convergence of the mollified and renormalised models to a unique limiting model
$\mod$ is the content of \cite[Sec.~10.5]{reg}. The fact that the limiting model
is adapted to the filtration $\CF_t$ in the required way is shown in the same way
as in 
%
%%%% JCM
\cite{Etienne}, 
%%%% Removed ??? in citation here
%
so that Assumption~\ref{ass:model} is also satisfied.
Assumption~\ref{ass:repr} is the content of \cite[Prop.~9.10]{reg}, which shows in particular that one
has
\begin{equ}
F_g^{(1)}(\d^* u) = c_g u - u^3\;,\quad
F_g^{(2)}(\d^* u) = 1\;,
\end{equ}
for a constant $c_g$ given by a suitable linear combination of the two components of $g$.
The regularity assumption in Assumption~\ref{ass:diff} follows immediately from the fact that
$F$ is polynomial. The fact that the derivative map $(\PPhi,J) \mapsto -3 \PPhi^2 J$ is strongly
locally Lipschitz between the spaces $\CD^{\gamma,\eta}$ and $\CD^{\bar \gamma, \bar \eta}$
follows from \cite[Prop.~6.12]{reg}. 

We now turn to Assumption~\ref{ass:shifts}, which is the one that requires
the largest amount of work. 
Regarding a suitable space
of shifts which generate a continuous action on our space $\CM$ of nice models,
we claim that the space $\CC^{-\kappa}(\R\times \R^d)$
has the required properties provided that $\kappa$ is sufficiently small,
so that we can the choose for $X_0$ space of the form $\CC^{\zeta}$
for $\zeta$ so that $\zeta \ge -\kappa$ and $X_0 \subset H_0$.
It is relatively easy to show this ``by hand'', but we want to have a 
more systematic proof which also carries over to the other examples. 
We introduce an auxiliary regularity structure
$(\hat \CT, \hat \CG)$ defined as follows, similarly to what 
was done in \cite[Sec.~3.2]{PeterMalliavin}.
The space $\hat \CT$ is constructed in exactly the same way
as $\CT$, with the exception that in the first step of the construction we start 
with a collection $\hat \CU$ that contains $X^k$ and $\CI(\Xi)$, as well as an additional
expression $\CI(\hat \Xi)$.
We then also add $\hat \Xi$ to the collection $\hat \CW$ otherwise defined from
$\hat \CU$ as above, and taken as a basis for the space $\hat \CT$. 
Graphically, if we denote $\hat \Xi$ by a circle, the symbols appearing in $\hat \CT$
are the same as those appearing in $\CT$, but with any occurrence of a bullet possibly
replaced by a circle, so for example $\CI(\Xi)\CI(\hat \Xi) = \<2h>$. 

Regarding the grading of $\hat \CT$, we set $\deg \hat \Xi = -\kappa$,
the degrees of the remaining basis vectors being obtained by using the same rules as above.
The structure group $\hat \CG$ is also defined as above by additionally
imposing that $\Gamma \hat \Xi = 0$ for every $\Gamma \in \hat \CG$. The same group $\RR$ 
acts naturally on 
$\hat \CT$ in exactly the same way as above. Denoting this action
by $\hat M$, one has for example,
for $g = (C_1,C_2)$
\begin{equ}
\hat M_g \<32h> = \<32h> + C_1\<30h> + C_1\<12h> + C_1^2\,\<10h> + C_2\<1h>\;.
\end{equ}
Compare this to \eqref{e:act1} where the constants are slightly different since there
are more inequivalent occurrences of $\<2>$ and $\<22>$ appearing in that symbol.
Thanks to \cite{BHZ} this can again be lifted naturally to an action on the corresponding
spaces of admissible models, so that we also have a space of ``nice models'' $\hat \CM$ for
this larger regularity structures.
Note also that one has $(\CT, \CG) \subset (\hat \CT, \hat \CG)$ in the sense
of inclusions of regularity structures as in \cite[Sec.~2.1]{reg}.

A crucial remark is that if $\kappa$ is sufficiently small (in our case one 
needs $\kappa > -1$ which guarantees that $\deg\<3h> > 0$), all of the elements
of $\hat \CW \setminus \CW$ are of strictly positive degree, except for $\hat \Xi$ 
itself. As a consequence, by repeatedly applying \cite[Prop.~3.31]{reg} and
\cite[Thm~5.14]{reg} and writing $\hat \MM$ for the space of admissible
models for $(\hat \CT, \hat \CG)$, there exists a \textit{unique} locally Lipschitz 
continuous map
$\CY\colon  \CC^{-\kappa} \times \MM \to \hat \MM$ such that
\begin{claim}
\item For every $h \in \CC^{-\kappa}$ and $\PPi \in \MM$, the model
$\hat \PPi^h = \CY(h,\PPi) \in \hat \MM$ agrees with $\PPi$ on $\CT \subset \hat \CT$
(or in other words $\hat \PPi^h$ extends $\PPi$) as in \cite[Def.~2.22]{reg}.
\item Writing $(\hat \Pi, \hat \Gamma) = \CY(h,\PPi)$, one has
$\hat \Pi_z \hat \Xi = h$ for every $z$.
\end{claim}
It is easy to show that the action of $\RR$ commutes with $\CY$ in the sense that, for every
$g \in \RR$, every $\PPi \in \CM$ and every $h \in \CC^{-\kappa}$, one has 
$\hat M_g \CY(h,\PPi) = \CY(h, M_g \PPi)$.
Indeed, both $\hat M_g \CY(h,\PPi)$ and $\CY(h, M_g \PPi)$
agree on $\CT \oplus \scal{\hat \Xi}$ and any admissible model is uniquely determined 
by this, as mentioned above.

It remains now to build a (locally Lipschitz continuous)
map $\CZ \colon \hat \MM \to \MM$, also commuting 
with the action of $\RR$, so that we can then define $\tau = \CZ \circ \CY$.
For this, given a model $\hat \PPi \in \hat \MM$, we introduce the following notion
of a ``$\hat\PPi$-polynomial'': 

\begin{definition}
A $\hat\PPi$-polynomial $f$ is a map $f \colon \R^{d+1} \to \hat\CT$
such that $f(z) = \Gamma_{z\bar z}f(\bar z)$ for any $z, \bar z$.
We say that $f$ is of degree $\deg f = \gamma$ if the component of $f(z)$ in
$\CT_{>\gamma}$ vanishes and its component in $\CT_{\gamma}$ is non-zero 
for some (and therefore all) $z$.
In particular, one has $f \in \CD^{\bar \gamma}$ for every $\bar \gamma > \deg f$.
\end{definition}

It is immediate that the product of two $\hat\PPi$-polynomials is again a
$\hat\PPi$-polynomial. 
%It is also immediate from \cite[Eq.~5.15]{reg}
%that, for every $\hat\PPi$-polynomial $f$ of degree $\gamma$,
%$\CK_\gamma f$ is again a $\hat\PPi$-polynomial
%and one has $\CK_{\bar \gamma} f = \CK_{\gamma} f$ for all $\bar \gamma \ge \gamma$.
%Indeed, $\hat \PPi$-polynomials have the property that $\CR f = \Pi_z f(z)$
%(for all $z$), so that the term $\CN_\gamma$ appearing in the definition of $\CK_\gamma$
%vanishes identically in this case.
%This justifies using simply the notation $\CK f$. Furthermore, $\deg\CK f = 2 + \deg f$
%as one might expect.
Given an admissible model $\hat\PPi = (\Pi,\Gamma) \in \hat \MM$, we now define a collection
of $\hat\PPi$-polynomials $\{f_z^\tau\}$ for $\tau \in \CW$ and $z \in \R^{d+1}$
recursively as follows. 
Recall first that our model $\PPi$ defines operators 
$\CJ(z) \colon \hat\CT \to \bar \CT$ by
\cite[Eq.~5.11]{reg}. We also define an alternative notion $\hdeg$ of degree on
$\hat \CW$ by setting $\hdeg \tau = \deg \tau$ for $\tau \in \CW$,
but then setting $\hdeg \hat \Xi = \deg \Xi$ and defining it on the rest of $\hat \CW$
by using the same rules as for $\deg$. This allows us to define operators $\bar \CJ(z)\colon \hat \CT \to \bar \CT$ by setting
\begin{equ}[e:defJbar]
\bar \CJ(z) \tau = \CQ_{<\hdeg \tau} \CJ(z)\;.
\end{equ}
With these definitions at hand, we set
\begin{equ}
f_z^{X^k}(\bar z) = \Gamma_{\bar z z} X^k\;,\qquad 
f_z^{\Xi}(\bar z) = \Xi + \hat \Xi\;. 
\end{equ}
Then, we set recursively
\begin{equ}
f_z^{\tau \bar \tau}(\bar z)
= f_z^{\tau}(\bar z) f_z^{\bar \tau}(\bar z)\;,\qquad
f_z^{\CI(\tau)}(\bar z) = \bigl(\CI  + \CJ(\bar z) - \Gamma_{\bar zz} \bar \CJ(z)\Gamma_{z\bar z}\bigr)f_z^{\tau}(\bar z)\;.
\end{equ}
Using \cite[Lem.~5.16]{reg} it is a simple exercise to verify that 
if $f_z^\tau$ is a $\hat \PPi$-polynomial, then $f_z^{\CI(\tau)}$ as defined
above is indeed again a $\hat \PPi$-polynomial. Furthermore, it follows by induction
that, for every $z$ and every $\tau$, $f_z^{\tau}(\bar z)$ is a linear combination
of terms of $\hdeg$-degree equal to $\deg \tau$.
Combining this with the definition of $\bar \CJ(z)$ and the fact that 
$\deg\hat \Xi > \deg \Xi$ allows to show inductively that, for every $\tau \in \CW$, 
one has
\begin{equ}[e:propfzt]
\hat\CQ_{<\deg \tau} f_z^\tau(z) = 0\;,\qquad \forall z \in \R^d\;,
\end{equ}
where $\hat\CQ_{<\gamma}$ is the projection onto $\hat\CT_{<\gamma}$.

%Furthermore, for every $\tau \in \CW \setminus \{\Xi\}$, one has
%\begin{equ}
%f_z^\tau(z) - \tau \in \hat \CT_{>0}\;.
%\end{equ}
Define now operators $\Lambda_{z\bar z} \colon \CT \to \CT$
by setting
\begin{equ}
\Lambda_{z\bar z} \Xi = \Xi\;,\qquad \Lambda_{z\bar z} X^k = \Gamma_{z\bar z}X^k\;,
\end{equ}
and then recursively by
\begin{equ}
\Lambda_{z\bar z} (\tau \bar \tau) = (\Lambda_{z\bar z} \tau)(\Lambda_{z\bar z}\bar \tau)\;,
\end{equ}
as well as
\begin{equ}
\Lambda_{z\bar z} \CI \tau = \CI \Lambda_{z\bar z} \tau + \bigl(\bar \CJ(z) \Gamma_{z\bar z} - \Gamma_{z\bar z}\bar \CJ(\bar z)\bigr)f_{\bar z}^\tau(\bar z)\;.
\end{equ}
We also extend the definition of $f_z^\tau$ to all of $\tau \in \CT$ by linearity.
It is then a straightforward exercise to verify 
by recursion that one has the identity
\begin{equ}[e:algiden]
f_z^{\Lambda_{z\bar z} \tau} = f_{\bar z}^\tau\;.
\end{equ}
The map $\CZ$ is then defined as follows. Given a model $\hat \PPi = (\Pi,\Gamma) \in \hat\MM$,
we define a new model $\CZ \hat \PPi = (\tilde \Pi, \tilde \Gamma)$ by setting
\begin{equ}[e:defPitilde]
\tilde \Pi_z \tau = \CR f_z^\tau = \Pi_z f_z^\tau(z) \;,\qquad \tilde \Gamma_{z\bar z} = \Lambda_{z\bar z}\;,
\end{equ}
where $\CR$ is the reconstruction operator associated to $\hat \PPi$.
Let us verify that this is indeed an admissible model.
It follows from \eqref{e:algiden} and these definitions that one has 
the algebraic identity $\tilde \Pi_z \tilde \Gamma_{z\bar z} = \tilde \Pi_{\bar z}$ 
as required. Since the map $\tau \mapsto f_x^\tau$ is easily seen to be injective,
\eqref{e:algiden} also implies that $\Lambda_{z\bar z} \Lambda_{\bar z \underline z} = \Lambda_{z\underline z}$, so that it remains to verify that the required analytical bounds
hold. The bounds on $\tilde \Pi_z \tau$ follow at once from the corresponding bounds on
$\PPi$ and the fact that $\tilde \Pi_z \tau = \Pi_z f_z^\tau(z)$, combined with \eqref{e:propfzt}. 
The bounds on $\Lambda_{z\bar z}$ on the other hand follow inductively from the 
corresponding bounds on $\Gamma_{z\bar z}$, combined with \cite[Lem.~5.21]{reg}
and the fact that $f_z^\tau(z)$ is a linear combination of terms with 
$\hdeg$-degree equal to $\deg\tau$.
The fact that the new model is again admissible (in the sense that it realises $K$ for $\CI$) 
follows at once from the fact that since
\begin{equ}
\tilde \Pi_z \CI(\tau) =
\Pi_z \bigl(\CI  + \CJ(z) - \bar \CJ(z)\bigr)f_z^{\tau}(z)\;,
\end{equ}
and since 
\begin{equ}
\Pi_z \bigl(\CI  + \CJ(z)\bigr)\bar \tau = K \star \Pi_z \bar \tau
\end{equ}
for every $z$ and every $\bar \tau$ in the domain of $\CI$, it follows 
from the definitions \eqref{e:defJbar} and \eqref{e:defPitilde} 
of $\tilde \Pi_z$ and $\bar \CJ(z)$ that $\tilde \Pi_z \CI(\tau) - K \star \Pi_z \tau$
is a polynomial of degree $\tau$. Since furthermore it satisfies the desired analytical bounds,
it does indeed realise $K$ for $\CI$.

Setting $\tau(h,\PPi) = \CZ ( \CY(h,\PPi))$, we still need to show that $\tau$ is an 
action and that it commutes with the action of $\RR$.
Regarding the canonical lift $\LL$, it is immediate from the definitions that one has
\begin{equ}[e:taucanonical]
\tau(h,\LL(\xi)) = \LL(\xi + h)\;,
\end{equ}
for every smooth $\xi$ and $h$. If we can show that furthermore
\begin{equ}[e:wantedModel]
\tau(h,M_g \PPi) = M_g \tau(h,\PPi)\;,
\end{equ}
for every smooth $h$, every smooth $\PPi \in \MM$ and every $g \in \RR$, 
then both the required identity
for $\tau$ and the fact that it is an action follow by continuity.
Since, as before, admissible models on $\CT$ are uniquely determined by their
action on those elements $\tau \in \CW$ with $\deg\tau \le 0$, the identity
\eqref{e:wantedModel} only needs to be verified on the sector spanned by 
the elements in \eqref{e:negsymb}. This is non-trivial only for the last three
elements, but can easily be verified from the explicit formulae of both the 
action of $\RR$ and that of $\tau$. 

Let us also sketch a more systematic way of verifying \eqref{e:wantedModel}. For this, we 
note that the linear map $Z \colon \hat \CT \to \hat \CT$
obtained by substituting $\Xi$ by $\Xi + \hat \Xi$ in every formal expression of
$\hat \CW$ belongs to the ``renormalisation group'' of the regularity structure
$(\hat \CT, \hat \CG)$ in the sense of \cite[Def.~8.41]{reg} (see \cite[Appendix~B]{HQ} for a proof
of the fact that the second identity of \cite[Eq.~8.39]{reg} actually follows from the 
first one), thus yielding an action $\hat \CZ$ on $\hat \MM$. Furthermore,  
$\CZ$ is precisely given by $\hat \CZ$, followed by the canonical
projection from $\hat \MM$ to $\MM$. The claim \eqref{e:wantedModel} is then an
immediate consequence of the fact that the linear maps $Z$ and $M_g$ commute
on $\hat \CT$. This is indeed the case, with both $Z M_g$ and $M_g Z$ given by the map that 
maps a symbol $\tau$ to the sum over all ways of substituting instances of $\Xi$ by 
$\hat \Xi$, as well as contracting occurrences of $\<2>$ and $\<22>$ and replacing them 
by the corresponding renormalisation constants.
Since $L^p([0,1],\CC) \subset \CC^{-\kappa}$ for sufficiently large $p$,
we can for example choose $X_0 = \CC$, the space of 
$L$-periodic continuous functions. Furthermore, as a consequence of \eqref{e:taucanonical}
and \eqref{e:wantedModel}, it is immediate that \eqref{e:SPDEshift} holds with $G = 1$,
so that Assumption~\ref{ass:shifts} holds.
Finally, Assumption~\ref{ass:nondegenerate} is trivially verified since
$\CC^\zeta \subset \CC \subset L^2 \cap \CC^\eta$, and $G = 1$.
\end{proof}

\subsection{Multi-component KPZ equation}

Consider the system of coupled KPZ equations
\begin{equ}[e:cKPZ]
\d_t h^i = \d_x^2 h^i + S^{i}_{jk} \d_x h^j\,\d_x h^k + \xi_i\;,
\end{equ}
(summation over repeated indices is implied)
where the $\xi_i$ are independent space-time white noises on $\R \times \T^1$ 
and the $S^{i}_{jk}$ are constant coefficients. Such systems of equations
arise naturally when considering the large-scale limits of systems with more than
one locally conserved quantity, see \cite{MultiKPZ,SpohnFluct}.
We will sometimes assume that the coefficients $S$ satisfy the symmetry conditions
\begin{equ}[e:symmetry]
S^{i}_{jk} = 
S^{i}_{kj}\;,\qquad 
S^{i}_{jk} = 
S^{j}_{ki}\;. 
\end{equ}
As shown formally in \cite{MultiKPZ,SpohnFluct} and proven rigorously in \cite{Funaki}, 
this guarantees that the invariant measure
for \eqref{e:cKPZ} (modulo height shifts) is Gaussian and simply consist of independent
Brownian bridges. 
As usual, \eqref{e:cKPZ} should be interpreted as the limit, as $\eps \to 0$, of solutions to 
\begin{equ}[e:KPZeps]
\d_t h^i = \d_x^2 h^i + S^{i}_{jk} \d_x h^j\,\d_x h^k - C_i^{(\eps)} + \xi_i^{(\eps)}\;,
\end{equ}
where the constants $C_i^{(\eps)}$ are chosen in such a way that the bilinear term is given by
a Wick product with respect to the Gaussian structure determined by the linearised equation.
In the symmetric case \eqref{e:symmetry}, one chooses
\begin{equ}[e:constKPZ]
C_i^{(\eps)} = {c \over \eps} \sum_k S^i_{kk}\;,
\end{equ}
where $c$ is as in \cite[Eq.~1.4]{Hao}.
The symmetry condition \eqref{e:symmetry} is such that the additional 
logarithmically divergent renormalisation constants 
appearing for example in the analysis of \cite{KPZ} cancel out, otherwise
one may have to add a logarithmically diverging term to \eqref{e:constKPZ}.
As shown in \cite{KPZ,Peter} in the one-component case, but the multi-component case
does not add any difficulty whatsoever, the limit of \eqref{e:KPZeps} 
exists, up to the
first time at which the $\CC^\alpha$-norm of the solution blows up for some 
(and therefore all) $\alpha > 0$. It is now rather straightforward to verify that 
our assumptions are verified.

\begin{theorem}
The solutions to the coupled KPZ equations \eqref{e:KPZeps} generate a Markov semigroup
that satisfies the strong Feller property.
\end{theorem}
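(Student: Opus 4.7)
The plan is to invoke Theorem~\ref{theo:mainReg}, so the task reduces to exhibiting the data of Section~\ref{sec:reg} for the multi-component system \eqref{e:cKPZ} and checking Assumptions~\ref{ass:canonical}--\ref{ass:nondegenerate}. Since, as the authors note, the passage from one to several components introduces only bookkeeping, I would take for $(\CT,\CG)$ the natural vector-valued version of the KPZ regularity structure of \cite{KPZ}, with a noise generator $\Xi_i$ for each component (each of degree $-{3\over 2}-\kappa$), define $V$ as the sector spanned by the usual polynomials together with all iterated integrals $\CI(\tau)$ except the $\CI(\Xi_i)$'s (so that $\zeta = {1\over 2}-\kappa > 0$), let $\bar V$ be a sector large enough to contain the range of the nonlinearity (so that $\bar\zeta = -1-2\kappa$), and take for $N$ the collection of noise symbols $(\Xi_i)_i$ via the multi-integrator extension indicated in the text. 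Exponents $\gamma,\bar\gamma,\eta,\bar\eta$ can then be fixed as in the one-component analysis of \cite{KPZ}, with $q=1$, so that all the algebraic conditions of Section~\ref{sec:reg} are satisfied.

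The analytic hypotheses all reduce to statements either present in \cite{KPZ} or established above for $\Phi^4_3$. The renormalisation group $\RR$ is the finite-dimensional abelian group generated by the diverging constants \eqref{e:constKPZ} (plus one logarithmic counter-term per component in the non-symmetric case), acting on $\MM$ as in \cite[Sec.~8]{reg}, and $\LL$ together with the convergence of renormalised smooth models to a limiting $\mod$ is supplied by \cite{KPZ}, yielding Assumption~\ref{ass:canonical}. Measurability of $\mod$ with respect to $\CF_t$ (Assumption~\ref{ass:model}) follows from the non-anticipativity of the heat kernel by the same argument used above for $\Phi^4_3$. Assumption~\ref{ass:repr} is the multi-component analogue of \cite[Prop.~9.10]{reg}, with $F_g^{(1)}(\d^* u)_i = S^{i}_{jk}\d_x u^j\,\d_x u^k - C_i^{(\eps)}$ and $F_g^{(2)} = \mathrm{Id}$, while Assumption~\ref{ass:diff} is immediate since $F$ is a polynomial map, strong local Lipschitz continuity of $DF$ being given by \cite[Prop.~6.12]{reg}.

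The only substantial piece of work, exactly as in the $\Phi^4_3$ proof, is Assumption~\ref{ass:shifts}, and I would adapt the auxiliary-regularity-structure construction of Section~\ref{sec:Phi43} verbatim: adjoin an extra generator $\hat\Xi_i$ of degree $-\kappa$ to each $\Xi_i$ to obtain $(\hat\CT,\hat\CG)$; define $\CY\colon \CC^{-\kappa}\times\MM\to\hat\MM$ as the unique locally Lipschitz extension of an admissible model sending $\hat\Xi_i$ to the shift component $h_i$; define $\CZ\colon\hat\MM\to\MM$ by applying the substitution $\Xi_i\mapsto\Xi_i+\hat\Xi_i$ (which lies in the renormalisation group of $(\hat\CT,\hat\CG)$ by \cite{BHZ}) and projecting back to $\MM$; and set $\tau = \CZ\circ\CY$ as the required action of $\V=L^p([0,1],X_0)$ with $X_0\subset\CC^{-\kappa}$ chosen to contain $\CC^\zeta$. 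The commutation $\tau(h,M_g\PPi) = M_g\tau(h,\PPi)$ with $\RR$ follows as in the $\Phi^4_3$ case from the fact that the substitution map and the renormalisation map commute on $\hat\CT$. Since the noise enters \eqref{e:cKPZ} additively, \eqref{e:SPDEshift} holds with $G\equiv 1$, so Assumption~\ref{ass:nondegenerate} is automatic. The main obstacle is purely notational: keeping the indices $i,j,k$ and all pairings in products like $(\d_x\CI(\Xi_j))(\d_x\CI(\Xi_k))$ under control when transcribing the scalar $\Phi^4_3$ argument; no new analytical difficulty arises. Once all assumptions are in place, Theorem~\ref{theo:mainReg} delivers the strong Feller property.
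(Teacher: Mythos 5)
Your proposal is correct and follows essentially the same route as the paper: reduce to Theorem~\ref{theo:mainReg}, take the multi-component analogue of the KPZ regularity structure with the same exponents ($\zeta=\tfrac12-\kappa$, $\bar\zeta=-1-2\kappa$, $q=1$), and transcribe the $\Phi^4_3$ verification of Assumptions~\ref{ass:canonical}--\ref{ass:nondegenerate}, including the auxiliary-structure construction of the shift action and $G\equiv 1$. The only divergence is your choice $N=(\Xi_i)_i$ where the paper sets $N=0$ (so $V=\hat V$, which is possible here because $\CI(\Xi_i)$ has positive degree $\tfrac12-\kappa$); since $G$ is constant either bookkeeping is admissible, though you should still record a concrete $X_0$ with $\CC^\zeta\subset X_0\subset\CC^\eta$ (e.g.\ the closure of smooth functions in $\CC^{1/4}$), as Assumption~\ref{ass:nondegenerate} is not entirely automatic from $G\equiv 1$ alone.
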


\begin{proof}
In this case, we simply set $N = 0$, so that $V = \hat V$. The construction of the 
corresponding regularity structure works in a way analogous to what we mentioned above
for the dynamical $\Phi^4_3$ model and is exposed for example in \cite{Peter}. 
The values of the exponents $\zeta$ and $\bar \zeta$ in this setting (assuming that 
the symbols $\Xi_i$ representing the noises $\xi_i$ have degree $-{3\over 2}-\kappa$) 
are given by  
\begin{equ}
\zeta = {1\over 2} - \kappa\;,\qquad \bar \zeta = -1-2\kappa\;,
\end{equ}
provided that $\kappa$ is sufficiently small.
Possible choices for $\gamma$, $\bar \gamma$, $\eta$ and $\bar \eta$ 
(again for $\kappa$ small enough) 
are given by 
\begin{equ}
\gamma = {7\over 4}\;,\qquad \bar \gamma = {1\over 4}\;,\qquad
\eta = {1\over 4}\;,\qquad \bar \eta = -{3\over 2}\;.
\end{equ}
(This time the main constraint on the $\eta$ exponents is $\eta \in (0,1/2)$ and
$\bar \eta = 2(\eta-1)$ which are imposed by the irregularity  of the noise and the
form of the nonlinearity.)
Again, these choices satisfy the various inequalities and properties
we imposed in Section~\ref{sec:reg} with $q = 1$.

The rest of the proof is virtually
identical to that given above for the dynamical $\Phi^4_3$ model.
\end{proof}

Denote now by $\tilde \CC^\alpha$ the space of $\CC^\alpha$ functions, quotiented by constant functions
and write $\mu$ for the probability measure on $\tilde \CC^\alpha$ (for some $\alpha \in (0,{1\over 2})$)
under which the
$h^i$ are independent Brownian bridges.
We then have the following result.

\begin{proposition}
Under condition \eqref{e:symmetry}, the measure $\mu$ is the unique invariant measure for
the multi-component KPZ equation and its solutions are almost surely global in time for
\textit{every} initial condition $h_0 \in \CC^\alpha$.
\end{proposition}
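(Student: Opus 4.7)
The plan is to derive both assertions from the strong Feller property just established together with Corollary~\ref{cor:unique} and the continuity of the explosion probability. Since constant shifts $h^i \mapsto h^i + c^i$ leave \eqref{e:cKPZ} invariant, the Markov semigroup $\PP$ descends to a strong Feller Markov semigroup on the quotient Banach space $\tilde \CC^\alpha$, which is separable and connected as a topological vector space.

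For the uniqueness claim, I invoke Corollary~\ref{cor:unique}: it suffices to exhibit an invariant probability measure whose topological support is all of $\tilde \CC^\alpha$. Invariance of $\mu$ under the symmetry condition \eqref{e:symmetry} with the renormalisation \eqref{e:constKPZ} is precisely the theorem of Funaki~\cite{Funaki}, foreshadowed in \cite{MultiKPZ,SpohnFluct}. Full topological support then follows from $\mu$ being the law of a non-degenerate centred Gaussian on $\tilde \CC^\alpha$: its Cameron--Martin space is the Sobolev space of $L$-periodic mean-zero $H^1$ functions (taken componentwise), which is dense in $\tilde\CC^\alpha$ for every $\alpha \in (0,1/2)$. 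Hence $\supp \mu = \tilde \CC^\alpha$ and Corollary~\ref{cor:unique} yields uniqueness among invariant measures concentrated on $\tilde \CC^\alpha$.

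For global existence, apply $\PP$ to the bounded measurable test function $\Psi$ given by $\Psi(\death)=1$ and $\Psi\restr \tilde \CC^\alpha = 0$. By the remark following Theorem~\ref{theo:main}, the resulting blow-up probability $p(u) = \P\bigl(\Phi(u,\mod) = \death\bigr)$ is a \emph{continuous} function on $\tilde \CC^\alpha$. Invariance of $\mu$ combined with $\mu(\{\death\}) = 0$ gives
\begin{equ}
\int_{\tilde \CC^\alpha} p(u)\,\mu(du) \;=\; \mu(\{\death\}) \;=\; 0\;,
\end{equ}
so $p$ vanishes $\mu$-almost everywhere; since $p$ is continuous and $\mu$ has full support, $p \equiv 0$. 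Iterating via the Markov property on successive unit intervals $[n, n+1]$ (solutions land almost surely in $\tilde\CC^\alpha$ at each integer time, from which the same argument applies) upgrades this to almost sure global existence from every deterministic initial condition.

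The step I expect to require the most care is checking that the passage to the quotient $\tilde \CC^\alpha$ is harmless: namely that the Markov dynamics is well-defined and the strong Feller property is preserved on the quotient, and that $\mu$ viewed on $\tilde \CC^\alpha$ really has the full support claimed. These are essentially soft consequences of translation invariance of \eqref{e:cKPZ} and Gaussianity of $\mu$, but should be stated cleanly. The iteration to times beyond $1$ also deserves a brief remark, since Theorem~\ref{theo:main} and its supporting framework are formulated on $[0,1]$; however, the non-explosion on $[0,1]$ from any initial condition in $\tilde \CC^\alpha$ together with the Markov property makes the extension to all $t > 0$ routine.
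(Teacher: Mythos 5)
Your uniqueness argument is the same as the paper's: invariance from \cite{Funaki}, full support of the Gaussian (Brownian bridge) measure on the quotient space $\tilde \CC^\alpha$, and Corollary~\ref{cor:unique}. The remarks about descending to the quotient and connectedness are fine and only make explicit what the paper leaves implicit.

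For global existence you take a genuinely different route, and it contains one step that is glossed over and is precisely where the paper does real work. You write that ``invariance of $\mu$ combined with $\mu(\{\death\})=0$ gives $\int p\,d\mu=\mu(\{\death\})=0$''. This uses invariance of $\mu$ \emph{for the Markov kernel $\PP$ of this paper}, i.e.\ the kernel on $\bar U=U\cup\{\death\}$ built from the regularity-structures solution map; an identity of that form already encodes the statement that for $\mu$-a.e.\ initial condition the solution does not explode on $[0,1]$, which is part of what you are trying to prove. What \cite{Funaki} provides is invariance of the Brownian bridge measure for the coupled KPZ dynamics obtained through approximation, and transferring this to ``$\mu\PP=\mu$ with $\mu(U)=1$'' for the possibly explosive dynamics considered here is exactly why the paper invokes the Bourgain-type argument of \cite{Bourgain,Konstantin}, using the exponential moments of $\mu$, to first obtain global solutions for $\mu$-almost every initial condition. (If one could quote $\mu\PP=\mu$ directly, that argument would be superfluous.) Once this input is supplied, your argument is correct and is in fact a nice alternative to the paper's second step: the paper takes the dense set $A$ of $\mu$-a.e.\ good initial conditions and uses the compensating-shift coupling of Theorem~\ref{theo:shift} to transfer global existence from $h_N\in A$ to an arbitrary $h$, up to an error $2\eps$; you instead use the continuity of the explosion probability $p$ (the remark following Theorem~\ref{theo:main}), which vanishes $\mu$-a.e.\ and hence, by full support, identically, and then iterate over unit time intervals via the Markov property, which is unproblematic. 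So: patch the invariance/non-explosion step (Bourgain's argument, or an equivalent identification of the stationary process with the regularity-structures solution up to its explosion time), and the rest of your proof stands.
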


\begin{proof}
As already mentioned, the fact that $\mu$ is invariant for \eqref{e:cKPZ} was recently 
obtained in \cite{Funaki}.
The uniqueness of the invariant measure $\mu$ then follows immediately from Corollary~\ref{cor:unique}
since the Brownian bridge measure has full support in the space $\tilde \CC^\alpha$.
Since furthermore $\mu$ has exponential moments, the argument of \cite{Bourgain} 
(see also \cite{Konstantin} 
for an application in a context very similar to here) yields global solutions
for $\mu$-almost every initial conditions.

Let $A \subset \tilde \CC^\alpha$ be this set of full measure. Then $A$ is dense in $\tilde \CC^\alpha$
so that, for every $h \in \tilde \CC^\alpha$, there exists a sequence $h_n \in A$ with $h_n \to h$.
Using Theorem~\ref{theo:shift}, it follows that for every $\eps > 0$, 
there exist $t > 0$, $N > 0$ and a coupling
between the solutions with initial conditions $h$ and $h_N$ such that 
$\P((h,\mod(\omega)) \in \CN_t) > 1-\eps$ and, under this coupling, 
$\P(\Phi_s(h(s),\mod(\omega)) = \Phi_s(h_N(s),\mod(\bar \omega)) \,\forall s \ge t) > 1-\eps$.
Since the solution with initial condition $h_N$ exists for all times almost surely, it follows
that the solution  with initial condition $h$ also exists for all time with probability at least
$1-2\eps$. Since $\eps$ was arbitrary, the claim follows.
\end{proof}

\begin{remark}
The class of equations considered in \cite{CPAM} can be treated in exactly the same way, 
with the same exponents appearing.
In particular, using the fact that the measure $\mu$ in \cite[Eq.~4.2]{CPAM}
has full support, our result shows that it is the unique invariant measure for
the process constructed in \cite[Thm~4.1]{CPAM}.
\end{remark}

\begin{remark}
The class of equations formally given by 
\begin{equ}
\d_t u = \d_x^2 u + H(u) + G(u)\,\xi\;,
\end{equ}
with periodic boundary conditions
considered in \cite{Etienne} also satisfies the assumptions
of our theorem, provided that the function $G$ appearing in \cite[Eq.~1.2]{Etienne}
is strictly positive. (In the case when $G$ is a matrix, we need its singular values to stay 
away from $0$.)
This is the first example considered here where 
we need to consider a non-constant function $G$. 
In this particular example, the strong Feller property has long been
known, see \cite[Thm~7.1.1]{DPZ}, albeit under rather strong boundedness conditions
on the coefficients $G$ and $H$.

One example with non-constant $G$ to which our theory also applies
is the natural ``stochastic heat equation with values in a manifold'' 
considered in \cite{string,BHZ,Ajay}. In this case, the non-singularity 
of $G$ is a consequence of the fact that the Riemannian metric tensor of the
target manifold is strictly positive.
\end{remark}

\subsection{The dynamical $P(\Phi)_2$ model}

This is the model formally given by
\begin{equ}[e:Phi2]
\d_t \Phi = \Delta \Phi - P'(\Phi) + \xi\;,
\end{equ}
where $\xi$ denotes space-time white noise, $P$ is an even polynomial with positive 
leading coefficient, and
the space variable takes values in some bounded two-dimensional torus.
The strong Feller property for this model can be obtained in the same way as for the
dynamical $\Phi^4_3$ model, but the arguments are a bit easier since 
there is much more ``wriggle room''. Since the invariant measure for \eqref{e:Phi2}
(interpreted in a suitable Wick-renormalised sense) is known \cite{AlbRock91,DPD2}
and has full support, this again allows one to obtain almost sure global solutions
for every initial condition in $\CC^{-\kappa}$ for suitable $\kappa > 0$.
This result however has already been obtained by more PDE-oriented methods 
in \cite{Global,Pavlos} and the strong Feller property for \eqref{e:Phi2} has very recently been
obtained in \cite{Pavlos}, so we do not provide any more details here.

\appendix

\section{Variation of constants formula}

In this section, we derive a version of the variation of constants formula that is
suitable for our needs. This allows us to relate the derivative of the solution map
with respect to its initial condition to the derivative with respect to the driving noise
by \eqref{e:MallDer}, which was used in a crucial way in our proof. 
Throughout this appendix we assume that we are in the setting described in Section~\ref{sec:reg} and
that all of the assumptions we made there are satisfied, without further mentioning this
in our statements.

To formulate our result, the following notation is useful. Let $P_s$ be the hyperplane 
$\{(t,x) \in \R^{d+1} \,:\, t = s\}$ and write $\CD^{\gamma,\eta}_s \eqdef \CD^{\gamma,\eta}_{P_s}$ for the corresponding spaces with singularity at $P_s$
as defined in \cite[Def.~6.2]{reg}. Let also $W \subset \CT$ be a sector of regularity $\alpha_0$
of the regularity structure $(\CT,\CG)$.
Consider then a measurable (and $L$-periodic as usual) function
$F\colon [0,1]\times \R^{d+1} \to W_{<\gamma}$ with the property that 
$F_s(\cdot) \eqdef F(s,\cdot) \in \CD^{\gamma,\eta}_s$ for every $s \in [0,1]$ and write
\begin{equ}[e:defG]
\bar F(z) = \int_0^1 F_s(z)\,ds\;,
\end{equ}
for every $z \in \R^{d+1}$. We then have the following result.

\begin{lemma}\label{lem:integrate}
Let $\gamma > 0$ and $\eta \in (\gamma-2,\alpha_0)$, 
where $\alpha_0$ is the regularity of the sector $W$ above. Then, if 
$F_s$ is bounded in $\CD^{\gamma,\eta}_s$ uniformly over $s \in [0,1]$, the function
$\bar F$ given by \eqref{e:defG} belongs to $\CD^\gamma$.
\end{lemma}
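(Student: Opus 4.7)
The plan is to verify directly the two defining bounds of $\CD^\gamma$ (cf.\ \cite[Def.~3.1]{reg}) for $\bar F$, using \eqref{e:defG} and the assumed uniform bounds on $F_s \in \CD^{\gamma,\eta}_s$. The key conceptual point is that the hypothesis $\eta > \gamma - 2$ is exactly what makes the time-singularity of $F_s$ at the hyperplane $P_s$ integrable in $s$, so that averaging in $s$ regularises $\bar F$ into a bona fide element of $\CD^\gamma$ with no residual singularity.

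For the pointwise bound, at $z=(t,x)$ one writes
\[
\|\bar F(z)\|_\beta \le \int_0^1 \|F_s(z)\|_\beta\,ds \lesssim \int_0^1 |t-s|^{((\eta-\beta)\wedge 0)/(2q)}\,ds,
\]
which is finite because the exponent is strictly greater than $-1$ by $\beta < \gamma$ and $\eta > \gamma - 2$. For the translation bound, linearity of $\Gamma_{z\bar z}$ gives
\[
\bar F(z) - \Gamma_{z\bar z}\bar F(\bar z) = \int_0^1 \bigl(F_s(z) - \Gamma_{z\bar z}F_s(\bar z)\bigr)\,ds,
\]
and I would split the $s$-integration according to whether $D_s \eqdef |z|_{P_s} \wedge |\bar z|_{P_s}$ exceeds $2|z-\bar z|_\s$ or not. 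On the ``far'' set $\{D_s \ge 2|z-\bar z|_\s\}$ the $\CD^{\gamma,\eta}_s$ estimate applies directly, producing an integrand of size $|z-\bar z|_\s^{\gamma-\beta}D_s^{\eta-\gamma}$ whose integral in $s$ converges again by $\eta > \gamma - 2$. On the ``close'' set $\{D_s < 2|z-\bar z|_\s\}$, which has Lebesgue measure at most of order $|z-\bar z|_\s^{2q}$, I would bound the integrand by the triangle inequality, using the individual $\CD^{\gamma,\eta}_s$ bound on $F_s(z)$ and the standard model estimate $\|\Gamma_{z\bar z}\tau\|_\beta \lesssim |z-\bar z|_\s^{\alpha-\beta}\|\tau\|_\alpha$ applied term by term to the expansion of $F_s(\bar z)$.

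The main obstacle is the bookkeeping in the close regime: the model bound produces a finite sum, over intermediate homogeneities $\beta' \in [\beta,\gamma)$, of terms $|z-\bar z|_\s^{\beta'-\beta}\|F_s(\bar z)\|_{\beta'}$, where $\|F_s(\bar z)\|_{\beta'}$ may itself be a mildly singular power of $|t-s|$ when $\beta' > \eta$. Combining the $|z-\bar z|_\s^{2q}$ gain from the small measure of the close set with the resulting $s$-integral, one must treat the cases $\beta' \le \eta$ and $\beta' > \eta$ separately; in both, the inequality $\gamma < \eta + 2q$ built into the hypothesis closes the bound at the desired order $|z-\bar z|_\s^{\gamma-\beta}$. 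Assembling the far and close contributions then yields the translation bound and completes the verification that $\bar F \in \CD^\gamma$.
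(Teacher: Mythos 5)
Your proposal is correct and follows essentially the same route as the paper's proof: split the $s$-integral into a ``far'' region where the full $\CD^{\gamma,\eta}_s$ translation bound has an integrable singularity thanks to $\eta>\gamma-2$, and a ``close'' region of measure of order $|z-\bar z|_\s^{2q}$ handled by the triangle inequality, the local $\CD^{\gamma,\eta}_s$ bounds and the action of $\Gamma_{z\bar z}$, closing with $\eta+2q>\gamma$ (you additionally spell out the pointwise bound, which the paper leaves implicit). The only cosmetic difference is that your case $\beta'\le\eta$ is in fact vacuous, since all homogeneities of the sector $W$ are at least $\alpha_0>\eta$ --- which is precisely how the paper exploits the hypothesis $\eta<\alpha_0$.
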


\begin{proof}
For any pair of points $(z, \bar z)$ with $|z-\bar z| \le 1$ and any $\beta < \gamma$, we estimate 
$\|\bar F(z) - \bar F(\bar z)\|_\beta$ in the following way. Write $\delta = |z-\bar z|$ as a
shorthand and write $I_\delta = [t-4\delta^2, t+4\delta^2]$, where $t$ 
is such that $z= (t,x)$ for some $x \in \T^d$. We then 
write
\begin{equs}[e:boundG]
\|\bar F(z) - \Gamma_{z\bar z}\bar F(\bar z)\|_\beta &\le \int_{[0,1] \setminus I_\delta} \|F_s(z) - \Gamma_{z\bar z}F_s(\bar z)\|_\beta\,ds \\
&+ \int_{[0,1] \cap I_\delta} \bigl(\|F_s(z)\|_\beta  + \|\Gamma_{z\bar z} F_s(\bar z)\|_\beta\bigr)\,ds\;.
\end{equs}
For the first integral, we use the fact that, by the definition of the spaces $\CD^{\gamma,\eta}_s$,
one has the bound
\begin{equ}
\|F_s(z) - \Gamma_{z\bar z}F_s(\bar z)\|_\beta
\lesssim |z-\bar z|^{\gamma-\beta} |t-s|^{\eta-\gamma \over 2}\;.
\end{equ}
Since $\eta > \gamma - 2$ by assumption, this expression is integrable in $s$,
thus leading to the required bound of order $|z-\bar z|^{\gamma-\beta}$.
Regarding the second bound, it follows from the definition of the spaces $\CD^{\gamma,\eta}_s$
and the fact that we chose $\eta \le \alpha_0$ that
\begin{equ}
\|F_s(z)\|_\beta  + \|\Gamma_{z\bar z} F_s(\bar z)\|_\beta
\lesssim |t-s|^{{\eta-\beta \over 2}}
+ \sum_{\alpha \in [\beta,\gamma)} |\bar t-s|^{{\eta-\alpha \over 2}}|z-\bar z|^{\alpha-\beta}\;,
\end{equ}  
where $\bar t$ is the time component of $\bar z$. 
Since $\eta > \gamma - 2$, so that in particular $\eta > \beta - 2$, these functions
are all integrable in $s$ and we obtain the bounds
\begin{equ}
\int_{[0,1] \cap I_\delta} |t-s|^{{\eta-\beta \over 2}}\,ds \lesssim \delta^{\eta-\beta+2}\;,\quad
\int_{[0,1] \cap I_\delta} |\bar t-s|^{{\eta-\alpha \over 2}}\,ds \lesssim \delta^{\eta-\alpha+2}\;,
\end{equ}
so that the corresponding term in \eqref{e:boundG} is bounded by 
$|z-\bar z|^{\eta-\beta+2}$. Since furthermore $|z-\bar z| \le 1$
and $\eta + 2 > \gamma$, this is in turn bounded by $|z-\bar z|^{\gamma-\beta}$ as desired.
\end{proof}

We now consider linear equations of the type
\begin{equ}[e:linear]
J = \CP \one_+ \Psi J + P J_0\;,
\end{equ}
where, for some $T>0$, one has 
$\Psi \colon [0,T]\times \R^d \to L(V_{\gamma},\bar V_{\bar \gamma})$ and
we wrote $(\Psi J)(z) = \Psi(z)J(z)$. We make the following running assumption.

\begin{assumption}
The map $J \mapsto \Psi J$ with $(\Psi J)(z) = \Psi(z)J(z)$ 
maps $\CD^{\gamma,\eta}_s$ into $\CD^{\bar \gamma,\bar \eta}_s$
for every $s \ge 0$, with a bound of the type
\begin{equ}
\|\Psi J\|_{\bar \gamma,\bar \eta;[s,T]} \le C \|J\|_{\gamma,\eta;[s,T]}\;,
\end{equ}
holding uniformly over $s \in [0,T]$.
\end{assumption}

It follows immediately from \cite[Thm~7.8]{reg} that under this assumption, 
\eqref{e:linear} admits a unique global solution. Furthermore, by linearity of the equation,
this solution
is linear in the initial condition $J_0$. By our assumptions on the sector $V$, 
the solution $J$ is such that, away from $t=0$, $\CR J$ is a 
H\"older continuous function of regularity $\zeta$ by
\eqref{e:propSectors} and \cite[Prop.~3.28]{reg}.
We then write $J^{(s)} \colon \CC^\eta \to \CD^{\gamma,\eta}_s$ for the 
solution to
\eqref{e:linear} with $\one_+$ replaced by the indicator function of the
set $\{(t,x)\,:\, t \ge s\}$ and $P J_0$ replaced by the solution map
to the linearised equation with initial condition $J_s$, but starting at time $s$.
We also write $J_{s,t}\colon \CC^\eta \to \CC^\zeta$ for the linear maps such that,
for every $f \in \CC^\eta$, one has
\begin{equ}
\bigl(\CR J^{(s)}f\bigr)(t,x) = (J_{s,t} J_s f)(x)\;.
\end{equ}

It follows from \cite[Prop.~7.11]{reg} that these linear maps satisfy the identities
$J_{t,u}\circ J_{s,t} = J_{s,u}$ for any $0 \le s \le t \le u$, where we use of
course implicitly the canonical injection $\CC^\zeta \hookrightarrow \CC^\eta$.
Consider now a function $f \in L^p([0,1],\CC^\eta)$
and let $A$ be the solution to the fixed point equation
\begin{equ}[e:linearInhomogeneous]
A = \CP \one_+ \Psi A + P \star f\;.
\end{equ} 
As already mentioned in Remark~\ref{rem:goodenough}, our assumptions guarantee that
$P\star f \in \CC^\gamma$, which we interpret as an element of $\CD^\gamma$ by 
identifying it with its local Taylor expansion of order $\gamma$
at each point.
We then have the following version of the variation of constants formula.

\begin{proposition}\label{prop:varConst}
The equation \eqref{e:linearInhomogeneous} admits a unique local
solution in $\CD^\gamma$, given by the identity
\begin{equ}[e:varConst]
A = \int_0^\infty J^{(s)} f(s,\cdot)\,ds\;.
\end{equ}
\end{proposition}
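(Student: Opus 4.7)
The approach is to establish existence and uniqueness via the abstract fixed-point framework of \cite{reg}, and then verify that the candidate formula \eqref{e:varConst} solves the equation by a direct computation which reduces, modulo a commutation issue, to Duhamel's principle. For the existence and uniqueness part, I observe that $P \star f$ belongs to $\CC^\gamma$ (since $f \in L^p([0,1], \CC^\eta)$ with $p$ sufficiently large and $\gamma < \eta + 2$), hence defines an element of $\CD^\gamma$ via its canonical Taylor expansion as in Remark~\ref{rem:goodenough}. The running assumption on $\Psi$ combined with \cite[Thm~7.8]{reg} then yields, via the standard contraction argument, a unique local solution $A \in \CD^\gamma$ to \eqref{e:linearInhomogeneous}.

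I next set $\tilde A(z) = \int_0^1 J^{(s)} f(s,\cdot)(z)\, ds$, which by linearity of $J^{(s)}$ is well defined pointwise, and argue that $\tilde A \in \CD^\gamma$. Since $J^{(s)}$ is bounded linear from $\CC^\eta$ into $\CD^{\gamma,\eta}_s$, one has $\|J^{(s)} f(s,\cdot)\|_{\gamma,\eta;s} \lesssim \|f(s,\cdot)\|_{\CC^\eta}$, and the right-hand side lies in $L^p$ as a function of $s$. Mimicking the proof of Lemma~\ref{lem:integrate}, but replacing the uniform-in-$s$ bound by H\"older's inequality against this $L^p$ control, the condition $\eta > \gamma - 2$ (combined with $p$ large enough) still ensures that the bounds \eqref{e:boundG} remain integrable, yielding $\tilde A \in \CD^\gamma$.

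For the central step, I verify the fixed-point equation. By the defining property of $J^{(s)}$, pointwise in $s \in [0,1]$ one has the identity
\begin{equ}
J^{(s)} f(s,\cdot) = \CP \one_{\{t \geq s\}} \Psi J^{(s)} f(s,\cdot) + P_s f(s,\cdot)\;,
\end{equ}
where $P_s f(s,\cdot)$ denotes the solution (canonically lifted to $\CD^\gamma$) to the linearised evolution starting from $f(s,\cdot)$ at time $s$. Integrating this identity in $s$ over $[0,1]$ and exploiting that $J^{(s)} f(s,\cdot)$ vanishes identically on $\{t < s\}$, one obtains
\begin{equ}
\tilde A = \CP \one_+ \Psi \tilde A + \int_0^1 P_s f(s,\cdot)\, ds\;,
\end{equ}
and the last integral equals $P \star f$ by the classical Duhamel formula (and the non-anticipativity of $P$). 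Uniqueness from the first step then forces $\tilde A = A$.

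The main obstacle is justifying the commutation of the integral in $s$ with the nonlocal operator $\CP$ (and with the multiplication by $\Psi$) in the second display above. My approach is to first verify the identity for $f$ smooth and compactly supported in time in $(0,1)$, where every object reduces to a classical function and the commutation is an immediate consequence of Fubini's theorem; and then to extend by density in $L^p([0,1], \CC^\eta)$, using the boundedness of $J^{(s)}$, the continuity of $\CP$ on $\CD^\gamma$ from \cite[Thm~5.12]{reg}, and the continuity established in the previous paragraph, so that both sides of \eqref{e:linearInhomogeneous} pass to the limit along smooth approximations of $f$.
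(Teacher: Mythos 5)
Your proof is correct and follows essentially the same route as the paper's: you show the candidate $A=\int_0^1 J^{(s)}f(s,\cdot)\,ds$ belongs to $\CD^\gamma$ via (a variant of) Lemma~\ref{lem:integrate}, use the defining identity for $J^{(s)}$ together with the vanishing of $J^{(s)}f(s,\cdot)$ before time $s$, integrate in $s$, identify $\int_0^1 P_s f(s,\cdot)\,ds$ with $P\star f$, and conclude by uniqueness from \cite[Thm~7.8]{reg}. The extra care you take with the merely $L^p$-in-$s$ control when adapting Lemma~\ref{lem:integrate} and with justifying the interchange of the $s$-integral with $\CP$ and $\Psi$ (smooth $f$ first, then density) only fills in details the paper dispatches by directly invoking the lemma and the linearity of $\CP$.
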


\begin{proof}
Defining $A$ by \eqref{e:varConst}, we only need to show that 
\eqref{e:linearInhomogeneous} is satisfied. First, note that $A$ does indeed
belong to $\CD^\gamma$ as a consequence of Lemma~\ref{lem:integrate}.
We also note that, by the definition of $J^{(s)}$, one has
for every $s \ge 0$ the identity
\begin{equ}
J^{(s)} f(s,\cdot) = \CP \one_{t \ge s} \Psi J^{(s)}f(s,\cdot) + P_s f(s,\cdot)\;.
\end{equ}
Since furthermore $\bigl(J^{(s)} f(s,\cdot)\bigr)(t,\cdot) = 0$ for $t \le s$,
one also has
\begin{equ}
J^{(s)} f(s,\cdot) = \CP  \Psi J^{(s)}f(s,\cdot) + P_s f(s,\cdot)\;.
\end{equ}
Integrating over $s$ and exploiting the linearity of $\CP$, together with the fact that,
for $\bar \CT$-valued elements $f \in \CD^\alpha$, $\CP \one_+ f$ coincides with the
canonical lift of $P \star (\one_+ f)$, completes the proof.
\end{proof}

\begin{corollary}\label{cor:varConst}
For every $t \in [0,1]$ and every $(\Phi_0,\PPi) \in \CN_t$, the identity
\eqref{e:MallDer} holds. 
\end{corollary}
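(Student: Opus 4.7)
The plan is to identify $\CD \Phi_t(\Phi_0, \PPi) h$ with the solution at time $t$ of the linearisation of the shifted fixed-point problem \eqref{e:SPDEshift}, and then invoke Proposition~\ref{prop:varConst} to turn this solution into the integral on the right-hand side of \eqref{e:MallDer}. For $\epsilon$ small, Assumption~\ref{ass:shifts} allows us to rewrite $\Phi_t(\Phi_0, \tau(\epsilon h, \PPi)) = (\CR \hPPhi^{\epsilon h})(t, \cdot)$, where $\hPPhi^{\epsilon h}$ solves \eqref{e:SPDEshift} with the \emph{fixed} model $\PPi$ and scaled shift $\epsilon h$. Differentiating this fixed-point equation at $\epsilon = 0$, the quantity $A \eqdef \partial_\epsilon|_{\epsilon=0} \hPPhi^{\epsilon h}$ satisfies the linearised modelled equation
\begin{equ}
A = \CP \one_+ \bigl(DF(\PPhi) A\bigr) + P \star \bigl(G(\scal{\one, \PPhi}) h\bigr)\;,
\end{equ}
since any cross-term arising from differentiating $G(\scal{\one, \hPPhi^{\epsilon h}})$ carries an extra factor of $\epsilon h$ and therefore vanishes at $\epsilon = 0$. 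The existence of this Fréchet derivative follows from the implicit function theorem applied to \eqref{e:SPDEshift}, exactly as in the verification of Assumption~\ref{ass:MallDer} at the end of the proof of Proposition~\ref{prop:compatible}.

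Next I would verify that this linearised equation fits the hypotheses of Proposition~\ref{prop:varConst}. The operator $\Psi = DF(\PPhi)$ is precisely the one appearing in the second line of \eqref{e:DerSPDE}, and Assumption~\ref{ass:diff} guarantees the required mapping property $J \mapsto DF(\PPhi) J$ from $\CD^{\gamma,\eta}_s$ into $\CD^{\bar \gamma, \bar \eta}_s$ with bounds uniform in $s \in [0,t]$. The inhomogeneity $f(s, x) = G(\scal{\one, \PPhi(s, \cdot)})(x)\, h(s, x)$ lies in $L^p([0,1], \CC^\eta)$ by the reasoning invoked in Remark~\ref{rem:goodenough}: $\scal{\one, \PPhi}$ takes values in $\CC^\zeta$ by \cite[Prop.~3.28]{reg}, so $G(\scal{\one, \PPhi})$ is $\CC^\zeta$-valued and acts as a multiplier on $X_0$, and finally $X_0 \subset \CC^\eta$ by Assumption~\ref{ass:nondegenerate}.

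Proposition~\ref{prop:varConst} then yields the identity $A = \int_0^\infty J^{(s)}\bigl[G(\scal{\one, \PPhi(s, \cdot)}) h(s, \cdot)\bigr]\, ds$ as an element of $\CD^\gamma$. Applying $\CR$ at time $t$, interchanging with the integral by linearity, and using both that $J^{(s)} f(s,\cdot)$ is supported in $\{u \ge s\}$ and that $(\CR J^{(s)} f(s,\cdot))(t, \cdot) = J_{s,t} f(s, \cdot)$ by the definition of $J_{s,t}$, the integral collapses to $[0,t]$ and delivers exactly \eqref{e:MallDer}. The only genuine technical point is the justification of the $\epsilon$-differentiation (interchange with the modelled fixed-point map and with $\CR$), but this is already packaged by the implicit function theorem argument underlying Proposition~\ref{prop:compatible}, so nothing new is needed beyond correctly identifying the linearised equation.
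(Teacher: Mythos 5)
Your proposal is correct and follows essentially the same route as the paper: linearise the shifted fixed point problem \eqref{e:SPDEshift} at $h=0$ via the implicit function theorem, recognise the resulting linear equation as an instance of \eqref{e:linearInhomogeneous} with $\Psi = DF(\PPhi)$ and inhomogeneity $G(\scal{\one,\PPhi})h$, and invoke Proposition~\ref{prop:varConst} before evaluating $\CR$ at time $t$. The one point the paper makes explicit that you pass over rather quickly is that $DF(\PPhi)$ is only defined up to the possible blow-up time $T>t$, so before applying Proposition~\ref{prop:varConst} one should either cut off $DF(\PPhi)$ smoothly after time $t$ (as the paper does) or restrict the horizon in \eqref{e:linear} to $[0,t]$, which is what your ``bounds uniform in $s\in[0,t]$'' implicitly amounts to.
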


\begin{proof}
Applying the implicit functions theorem to \eqref{e:SPDEshift}, it follows that
\begin{equ}
\CD \Phi(\Phi_0,\PPi) h = \CR U^h\;,
\end{equ}
where $U^h \in \CD^{\gamma,\eta}$ solves the fixed point problem
\begin{equ}
U^h = \CP \one_+ \bigl(DF(\Phi) U^h \bigr) + P \star \bigl(G(\scal{\one,\Phi})h\bigr)\;,
\end{equ}
with $\Phi$ the solution to \eqref{e:SPDE} with initial condition $\Phi_0$.

On the other hand, we already argued that the derivative $J = D\Phi$ 
of the solution with respect its
initial condition $\Phi_0$ solves \eqref{e:DerSPDE}. We are therefore
almost exactly in the setting of Proposition~\ref{prop:varConst}.
The only problem is that the process $DF(\Phi)$ is only defined up to some 
possible blow-up time $T$, which is however guaranteed to satisfy $T > t$
by the definition of $\CN_t$. 
This can easily be circumvented by simply multiplying $DF(\Phi)$ with a smooth cutoff
function which leaves the equation unchanged before time $t$ and makes it vanish 
before time $T$, so that it can be continued trivially for all times.
We conclude that one does have the identity
\begin{equ}
U^h = \int_0^t J^{(s)} G(\scal{\one,\Phi(s,\cdot)})h(s) \,ds\;.
\end{equ}
Evaluating this at time $t$, the claim follows.
\end{proof}

\endappendix

\bibliographystyle{Martin}

\bibliography{refs}

\end{document}